\documentclass[11pt,a4paper,leqno]{amsart}
\usepackage[T1]{fontenc}
\usepackage{amsmath,amssymb,mathtools}
\usepackage{palatino}
\usepackage[margin=30mm]{geometry}
\usepackage[colorlinks=true,linkcolor=blue,citecolor=blue,urlcolor=blue]{hyperref}
\numberwithin{equation}{section}
\swapnumbers
\newtheorem{theorem}{Theorem}[section]
\newtheorem{lemma}[theorem]{Lemma}
\newtheorem{corollary}[theorem]{Corollary}
\theoremstyle{definition}
\newtheorem{definition}[theorem]{Definition}
\theoremstyle{remark}

\newcommand{\R}{\mathbb R}
\newcommand{\Z}{\mathbb Z}
\newcommand{\cD}{\mathcal D}
\newcommand{\1}{\mathbf 1}
\newcommand{\Id}{\mathrm{Id}}
\newcommand{\ch}{\operatorname{ch}}
\newcommand{\sh}{\operatorname{sh}}
\newcommand{\ud}{\,\mathrm d}

\title[The Zygmund and Rey conjectures]
{The Zygmund conjecture and Rey's exponential integrability conjecture}
\author{Henri Martikainen}
\address[H.M.]{Department of Mathematics, Washington University in St. Louis,
1 Brookings Drive, St. Louis, MO 63130, USA}
\email{henri@wustl.edu}
\subjclass[2020]{Primary 42B25; Secondary 28A15, 60G46}
\keywords{Zygmund's conjecture, strong maximal function, exponential
integrability, sparse families, incomparable rectangles}
\date{}
\hypersetup{pdftitle={The Zygmund conjecture and Rey's exponential integrability conjecture},
pdfauthor={Henri Martikainen}}

\begin{document}
\begin{abstract}
We prove the Zygmund conjecture in all parameters: the maximal
operator associated with products of cubes with side lengths
$(s_1,\ldots,s_{m-1},\phi(s_1,\ldots,s_{m-1}))$,
where $\phi$ is any positive coordinatewise nondecreasing function,
satisfies the weak $L(\log L)^{m-2}$ estimate for every $m\geq3$.
The estimate is new for $m>3$.
Our approach is to prove Rey's recent conjecture on the exponential
integrability of the overlap function
$h_{\mathcal G}=\sum_{I\in\mathcal G}\1_I$, with exponent $1/(m-1)$,
for sparse incomparable families $\mathcal G$ of $m$-parameter
dyadic rectangles, $m\geq2$.
The Zygmund conjecture, even in the continuous form above, follows
by strengthening a reduction given by Rey.
\end{abstract}
\maketitle
\allowdisplaybreaks[2]

\section{Introduction}

Let $\R^d=\R^{d_1}\times\cdots\times\R^{d_m}$, where
$d=d_1+\cdots+d_m$. For $m\geq3$ and a coordinatewise nondecreasing
function $\phi\colon(0,\infty)^{m-1}\to(0,\infty)$, let
$\mathcal R_\phi$ consist of all rectangles
$R=R^1\times\cdots\times R^m$ with axis-parallel cube factors
and side-length tuple
\[
 (s_1,\ldots,s_{m-1},\phi(s_1,\ldots,s_{m-1})),
 \qquad s_1,\ldots,s_{m-1}>0.
\]
For a rectangle family $\mathcal E$, write
$M_{\mathcal E}f(x)=\sup_{I\in\mathcal E,\,x\in I}
|I|^{-1}\int_I|f|$.
The Zygmund conjecture predicts the weak $L(\log L)^{m-2}$ estimate
\begin{equation}\label{eq:zygmund}
 |\{x:M_{\mathcal R_\phi}f(x)>\lambda\}|
 \leq C_{m,d}\int_{\R^d}\frac{|f(x)|}{\lambda}
 \left[\log\!\left(e+\frac{|f(x)|}{\lambda}\right)\right]^{m-2}\ud x,
 \qquad\lambda>0,
\end{equation}
with a constant independent of $\phi$.
The full $m$-parameter strong maximal function has the logarithmic
power $m-1$: the conjecture says that making one side length depend
monotonically on the others saves one logarithm, reducing this power
to $m-2$.
The origins of Zygmund's conjecture are sometimes dated to circa 1935;
see Mart\'inez~\cite{Martinez}. C\'ordoba's solution~\cite{Cordoba} of the case
$m=3$ in $\R^3$ itself dates back nearly half a century.

Zygmund also proposed a more general conjecture in which
all side lengths are nondecreasing functions of fewer parameters.
Soria~\cite{Soria} disproved that broader conjecture; the formulation
above survives his counterexamples. The dyadic version
(with $d_1=\cdots=d_m=1$, but this is not important) is stated as
Conjecture~1 in Rey~\cite{Rey},
who gave a new proof of the dyadic case $m=3$.
We prove \eqref{eq:zygmund} for
every $m\geq3$ in
Theorem~\ref{thm:continuous-zygmund}. This proof has also been formalized
in Lean~4; see the end of the Introduction.
For a selection of work on Zygmund's conjecture and related
differentiation problems, see
\cite{Cordoba,FeffermanPipher,Gauvan,HagelsteinOnianiStokolos,HagelsteinStokolos,Martinez,Soria,Stokolos,Zygmund}.

Our approach uses Rey's recent work~\cite{Rey} on how sparseness
and incomparability of a rectangle family control its overlap.
We now allow $m\geq2$ and let
$\cD=\cD^1\times\cdots\times\cD^m$ be a product of dyadic
cube grids on $\R^{d_1}\times\cdots\times\R^{d_m}$.
For $\mathcal G\subset\cD$,
write
\[
 h_{\mathcal G}=\sum_{I\in\mathcal G}\1_I,
 \qquad \sh(\mathcal G)=\bigcup_{I\in\mathcal G}I.
\]
The family is \emph{incomparable} if no two distinct members contain
one another, and \emph{$\eta$-sparse} if there are pairwise disjoint
measurable sets $E_I\subset I$ with $|E_I|\geq\eta|I|$.
Rey uses the term \emph{antichain} for an incomparable family.
For fixed $0<\eta<1$, Rey's question is which powers $\alpha>0$ give
\[
 \int_{\R^d}\bigl[e^{c h_{\mathcal G}(x)^\alpha}-1\bigr]\ud x
 \leq C|\sh(\mathcal G)|
\]
uniformly over families with finite shadow.  The constants may depend
on $m$, $d$, and $\eta$, but not on $\mathcal G$.
Sparseness alone gives the sharp power $1/m$ by maximal-function
arguments.  Rey proves that incomparability improves this to power
$1$ when $m=2$, and conjectures the power $1/(m-1)$ for every
$m\geq2$~\cite[Conjecture~2]{Rey}.  We prove this conjecture in
Theorem~\ref{thm:overlap}.  Rey's examples show that these powers
cannot be increased.

\begin{center}
\small
\renewcommand{\arraystretch}{1.25}
\begin{tabular}{@{}lcc@{}}
\hline
\multicolumn{3}{@{}l}{\emph{Sharp exponential powers for unweighted overlap}}\\[3pt]
Setting & Sparse & Sparse and incomparable\\
\hline
Bi-parameter & $1/2$ & $1$ (Rey's theorem)\\
$m$-parameter, $m\geq3$ & $1/m$ & $1/(m-1)$ (this paper)\\[3pt]
\hline
\end{tabular}
\end{center}

For the dyadic Zygmund formulation, let $m\geq3$ and let
$\Phi\colon\Z^{m-1}\to\Z$ be coordinatewise nondecreasing, and
let $\mathcal Z_\Phi\subset\cD$ be the family of \emph{$m$-parameter
dyadic $\Phi$-Zygmund rectangles}, that is, rectangles with side-length tuple
\[
 \bigl(2^{k_1},\ldots,2^{k_{m-1}},
               2^{\Phi(k_1,\ldots,k_{m-1})}\bigr),
 \qquad (k_1,\ldots,k_{m-1})\in\Z^{m-1}.
\]
The dyadic conjecture, explicitly formulated as Conjecture~1 in
Rey~\cite{Rey}, is \eqref{eq:zygmund} with
$M_{\mathcal Z_\Phi}$ in place of $M_{\mathcal R_\phi}$,
uniformly in $\Phi$ and the grids.

The relation between the exponential integrability conjecture for the
overlap $h_{\mathcal G}$ and the dyadic Zygmund conjecture deserves
some explanation. This link was explicitly given by Rey in~\cite{Rey}.
For a finite family $\mathcal F\subset\mathcal Z_\Phi$ on whose
rectangles the average of $|f|$ exceeds $\lambda$, a
C\'ordoba--Fefferman selection~\cite{CordobaFefferman} gives $\mathcal G\subset\mathcal F$
with comparable shadow measure.  Hence
\[
 \lambda|\sh(\mathcal F)|
 \lesssim\lambda|\sh(\mathcal G)|
 \leq\sum_{I\in\mathcal G}\int_I|f|
 =\int|f|h_{\mathcal G}.
\]
The selection has a second important property: its slices
$\mathcal G_u$, obtained by fixing $u\in\R^{d_m}$ in the last
coordinate, are uniformly sparse $(m-1)$-parameter families.
The selected family $\mathcal G$ is also incomparable; monotonicity
of $\Phi$ then makes its slices incomparable as well.  The identity
$h_{\mathcal G}(x,u)=h_{\mathcal G_u}(x)$ allows us to integrate
the conjectured exponential bound for $h_{\mathcal G_u}$ in $u$.
Young's inequality then converts the resulting exponential bound
for $h_{\mathcal G}$ into the dyadic Zygmund estimate.
In the reverse direction, a maximal-function estimate gives an
overlap bound for every sparse family in the same basis, but with a
weaker exponential power.  Writing $\exp L^\alpha$ for the overlap
estimate of power $\alpha$, the two routes give
\[
 \begin{aligned}
 &(m-1)\text{-parameter sparse incomparable }\exp L^{1/(m-2)}
 \\
 &\quad\Longrightarrow
 m\text{-parameter $\Phi$-Zygmund weak }L(\log L)^{m-2}
 \quad\text{(the dyadic Zygmund conjecture)}
 \\
 &\quad\Longrightarrow
 m\text{-parameter $\Phi$-Zygmund sparse }\exp L^{1/(m-1)}.
 \end{aligned}
\]
The first arrow uses the special selections described above; the second applies to all
sparse families in $\mathcal Z_\Phi$.  It does not recover Rey's
estimate for arbitrary sparse incomparable families in the full
product grid, so these implications give no converse to Rey's
reduction.  In this sense, Rey's conjecture is stronger than the
dyadic Zygmund conjecture, and it is this stronger conjecture that
we prove. In Section~\ref{sec:continuous-zygmund} we give the reduction
in full, including the non-trivial extension needed for the non-dyadic
Zygmund conjecture, so the paper is self-contained in its logic.

Notice that the second arrow above says that, for sparse families, the
$\Phi$-Zygmund scale restriction gives the same exponential power
$1/(m-1)$ as incomparability does for general product families.
Section~\ref{sec:phi-sparse} supplies the corresponding sharpness result
for this known implication: imposing incomparability on top of
sparseness and the $\Phi$-Zygmund relation need not improve
this power further, even for the classical relation
$\Phi(k_1,\ldots,k_{m-1})=k_1+\cdots+k_{m-1}$.

Rey proves the $m=2$ case of his conjecture directly. His beautiful
proof orders intersecting incomparable rectangles by increasing
inclusion in one coordinate and decreasing inclusion in the other.
This order factors the measure of a multiple intersection into a
product of normalized pairwise overlaps. Sparseness and the $L^2$
boundedness of the strong maximal function give an $L^2$ estimate
for linear combinations of normalized rectangle indicators.
Together with the intersection formula, this yields moment bounds
for $h_{\mathcal G}$ and hence the exponential estimate.
He then deduces, as an application of this result, the maximal-function
bound \eqref{eq:intro-incomparable-maximal} for $m=2$, with the stated sharp
dependence on $p'=p/(p-1)$.

\subsection*{How our proof works}

We go in the opposite direction. We first prove directly, in
Theorem~\ref{thm:maximal}, that every
incomparable family $\mathcal G\subset\cD$ satisfies
\begin{equation}\label{eq:intro-incomparable-maximal}
 \|M_{\mathcal G}f\|_p
 \leq C_{m,d}(p')^{m-1}\|f\|_p,
 \qquad1<p\leq2.
\end{equation}
No sparseness is assumed here.
The power of $p'$ is sharp as $p$ approaches one, as follows from
Rey's simplex examples~\cite[Section~3.2.2]{Rey}.
The hard part is to prove \eqref{eq:intro-incomparable-maximal}.
Once this is established, a short duality argument gives
$\|h_{\mathcal G}\|_q\leq C_{m,d,\eta}q^{m-1}
|\sh(\mathcal G)|^{1/q}$ for $q\geq2$ whenever $\mathcal G$
is also $\eta$-sparse, and hence Rey's conjectured exponential integrability.
In particular, this also gives a new proof
of the case $m=2$ originally proved by Rey.

To prove \eqref{eq:intro-incomparable-maximal}, we first reduce to
a finite setting with $f\geq0$. For an individual rectangle $R$, removing from $f$
the product martingale differences indexed by subrectangles of $R$
gives a function $F_R$ with $E_RF_R=E_Rf$, where $E_R$ denotes
averaging over $R$. The removed differences ensure that $E_RF_R$
can be expressed using averages of $F_R$ in at most $m-1$ coordinates.
Incomparability allows us to obtain both properties simultaneously
for every $R\in\mathcal G$ with a \textbf{single} function $F$.
We construct it by removing the differences indexed by subrectangles
of any member of $\mathcal G$; see Lemma~\ref{lem:representation}.

Taking the supremum over rectangles now gives a bound by maximal
functions in only $m-1$ coordinates:
\[
 M_{\mathcal G}f(x)\leq C_m\sum_{j=1}^m\mathcal M_{A_j}F(x),
 \qquad A_j=\{1,\ldots,m\}\setminus\{j\}.
\]
Here $\mathcal M_{A_j}$ takes the supremum of absolute values of
averages in the coordinates $A_j$. In contrast to $M$, the absolute
value is taken \emph{after} averaging, so cancellation in the
possibly sign-changing function $F$ is retained. Having a common
$F$ is essential: separate functions $F_R$ would leave a different
input for every rectangle.

Why is this promising? The usual maximal-function bound costs one
factor $p'$ per coordinate. The unrestricted $m$-parameter bound
therefore has $(p')^m$, whereas the representation above suggests
the desired $(p')^{m-1}$. But there is a remaining difficulty.
Applying the ordinary maximal-function bounds gives only
\[
 \|M_{\mathcal G}f\|_p
 \leq C_{m,d}(p')^{m-1}\|F\|_p.
\]
The right power of $p'$ is present, but we need $\|f\|_p$ on the
right. The construction does not provide a bound
$\|F\|_p\leq C_{m,d}\|f\|_p$ uniform as $p$ approaches one.

We instead exploit the cancellation retained by $\mathcal M$.
A dyadic product analogue of the Burkholder--Davis--Gundy
maximal--square-function estimate (Lemma~\ref{lem:square}) controls
the maximal functions of the relevant martingale sums by their
square functions, with a constant uniform as $p$ approaches one.
This reduces the main task to estimating square functions of $F$
in $m-1$ coordinates without an additional loss in $p$.

Why should these square functions be easier to control? The
construction of $F$ also gives
\[
 \Delta_{K^{A_j}}F=X_{K^{A_j}}\Delta_{K^{A_j}}f.
\]
Here $K^{A_j}$ is a product of dyadic cubes in the coordinates
$A_j$, and $\Delta_{K^{A_j}}$ is the corresponding product
martingale difference. The \emph{cross-coordinate averaging operator}
$X_{K^{A_j}}$ averages in the omitted coordinate $j$, over a
partition determined by $K^{A_j}$ and $\mathcal G$;
see Lemma~\ref{lem:cross}. Thus the square functions of $F$ involve
averages of the corresponding martingale differences of $f$.
But another difficulty remains: the averaging operator $X_{K^{A_j}}$
depends on the difference index $K^{A_j}$, so we cannot simply take a common
averaging operator outside the square function.
This final difficulty is overcome by weighted square-function
estimates with precise control of their dependence on $p$.
H\"older's inequality and absorption then yield
\eqref{eq:intro-incomparable-maximal}.

As explained above, \eqref{eq:intro-incomparable-maximal} gives
Rey's conjecture for every $m\geq2$ and, by Rey's reduction, the
dyadic Zygmund conjecture for every $m\geq3$. In passing from
arbitrary side lengths to dyadic covers, rounding can destroy the
incomparability of the slices used in Rey's reduction. To obtain the
non-dyadic version, we therefore prove a more general form of Rey's exponential
integrability estimate: it holds for sparse dyadic families
$\mathcal G$ satisfying the weaker condition
\[
 I,J\in\mathcal G,\quad I\subset J
 \quad\Longrightarrow\quad I^i=J^i
              \text{ for some }i\in\{1,\ldots,m\}.
\]
This extension and its application to the non-dyadic Zygmund
conjecture are proved in Section~\ref{sec:continuous-zygmund}.

\subsection*{Notation and conventions}

We use $\subset$ for inclusion allowing equality and $\subsetneq$
for strict inclusion.  All functions are real-valued; this entails no
loss in the maximal-function estimates, since
$M_{\mathcal E}f=M_{\mathcal E}|f|$.
All measures and integrals are with respect
to Lebesgue measure, and $\|g\|_p$ denotes the $L^p$ norm on the
indicated space.  Empty sums and suprema are zero; we omit the
trivial case of an empty initial rectangle family.
The constants $C_m$, $C_d$, and $C_{m,d}$ depend only on the indicated
parameters and may change between estimates unless fixed explicitly.
We write $A\lesssim B$ when $A\leq C_{m,d}B$.
For $1<p<\infty$, let $p'=p/(p-1)$.  In particular,
\[
 (p-1)^{-1}\leq p'\leq2(p-1)^{-1},\qquad1<p\leq2.
\]
Thus the powers of $p'$ in the maximal-function estimates and of
$(p-1)^{-1}$ in the weighted square-function estimates describe the
same growth as $p$ approaches one.

Let $\cD^i$ be a dyadic grid of cubes in $\R^{d_i}$ and set
$\cD=\cD^1\times\cdots\times\cD^m$ and $d=d_1+\cdots+d_m$.
The cubes in each grid are half-open, and those of side length
$2^k$ partition the corresponding space for every $k\in\Z$.
Any two intersecting cubes in the same grid are nested.
Rectangles are written $I=I^1\times\cdots\times I^m$.
For a cube $I^i\in\cD^i$, let $\ell(I^i)$ denote its side length.
In the special case that $I\in\cD$ is itself a cube, we write
$\ell(I)=\ell(I^1)=\cdots=\ell(I^m)$.
For a dyadic cube $Q$, let $\ch(Q)$ denote its dyadic children,
the subcubes of side length $\ell(Q)/2$.
For a rectangle $I$, define
\[
 \ch(I)=\{J^1\times\cdots\times J^m:
                    J^i\in\ch(I^i),\ 1\leq i\leq m\}.
\]
Thus $|J|=2^{-d}|I|$ for $J\in\ch(I)$.

Write $\langle g\rangle_Q=|Q|^{-1}\int_Qg$ for averages, and put
\[
 E_Qg=\1_Q\langle g\rangle_Q,
 \qquad
 \Delta_Qg=\sum_{J\in\ch(Q)}E_Jg-E_Qg.
\]
On the product space, $E_{Q^i}$ and $\Delta_{Q^i}$ act in the $i$th
coordinate, and $\Id_i$ denotes the identity in that coordinate.
Products of coordinate operators denote composition; in particular,
$E_I=\prod_iE_{I^i}$ and $\Delta_I=\prod_i\Delta_{I^i}$.
For $x=(x_1,\ldots,x_m)\in\R^d$ and
$A\subset\{1,\ldots,m\}$, write
\[
 \begin{gathered}
 x^A=(x_i)_{i\in A},\qquad
 K^A=\prod_{i\in A}K^i,\qquad
 \cD^A=\prod_{i\in A}\cD^i,\\
 E_{K^A}=\prod_{i\in A}E_{K^i},\qquad
 \Delta_{K^A}=\prod_{i\in A}\Delta_{K^i}.
 \end{gathered}
\]
These operators leave the other coordinates unchanged.  An empty
product of operators is the identity $\Id$.

\subsection*{Lean Formalization}

A Lean~4 formalization of the main results is available at
\begin{center}
\url{https://github.com/hjmartik/ZygmundConjecture}
\end{center}
It covers the incomparable maximal and overlap estimates
(Theorems~\ref{thm:maximal} and~\ref{thm:overlap}), their extension
under the weaker containment hypothesis
(Theorem~\ref{thm:weaker-containment}), the continuous and rounded
dyadic Zygmund endpoint estimates
(Theorems~\ref{thm:continuous-zygmund} and~\ref{thm:rounded-endpoint}),
and the sparse $\Phi$-Zygmund overlap estimate and its sharpness under
incomparability
(Corollary~\ref{cor:phi-sparse} and Theorem~\ref{thm:phi-sparse-sharpness}).

The formalized proofs contain no unproved placeholders or custom
axioms. All seven results have independently formulated restatements
whose definitions use only Mathlib; checked Lean proofs connect these
definitions and statements to the main development. Comparator has
confirmed agreement with these independent statements, and the exported
proofs have been checked by both Lean's kernel and the independent checker Nanoda.

The formalization was developed with AI assistance, with separate
reviews of its scope and faithfulness to the intended mathematics.
The main Lean codebase was produced using GPT-6-Astra at Max reasoning
effort, in a workflow with specialized subagents. Claude Fable~5.1
provided additional independent and blind review of the mathematical
specifications.

\subsection*{Acknowledgements}

This material is based upon work supported by the National Science Foundation under Grant
No. 2247234. H.M. was also supported by the Simons Foundation through MP-TSM-00002361
(travel support for mathematicians). The author thanks Kangwei Li for helpful discussions
and comments on the manuscript.
The author used OpenAI's Codex as a research and editorial aid in developing this manuscript---
for example, to explore strategies and formulations, stress-test proofs, and improve the
organization and exposition.

\section{A product maximal square-function estimate}\label{sec:square}

In this section we allow $m\geq1$, including the one-parameter setting.
We distinguish the positive maximal function from the supremum of
absolute averages:
\[
 M_{\cD}g(x)=\sup_{\substack{I\in\cD\\x\in I}}
                 \langle|g|\rangle_I,
 \qquad
 \mathcal M_{\cD}g(x)=\sup_{\substack{I\in\cD\\x\in I}}
                 |\langle g\rangle_I|.
\]
Thus $\mathcal M_{\cD}g\leq M_{\cD}g$, but cancellation in the
averages will be essential below.  The rectangular dyadic square
function is
\[
 S_{\cD}g(x)=\left(\sum_{I\in\cD}|\Delta_Ig(x)|^2\right)^{1/2}.
\]
Iterating the ordinary one-coordinate dyadic maximal inequality gives
\begin{equation}\label{eq:ordinary-maximal}
 \|M_{\cD}g\|_p
 \leq\|M_{\cD^1}\cdots M_{\cD^m}g\|_p
 \leq(p')^m\|g\|_p,
 \qquad 1<p<\infty.
\end{equation}
For a subfamily $\mathcal E\subset\cD$, $M_{\mathcal E}$ denotes the
same positive maximal function with the supremum restricted to
$I\in\mathcal E$.

The following lemma is a dyadic product analogue of the classical
Burkholder--Davis--Gundy maximal--square-function
estimate~\cite[Theorem~1.1]{BurkholderDavisGundy}.
What matters here is that the constant remains bounded as $p$
approaches one.  We give a direct proof for finite sums of product
martingale differences.

\begin{lemma}[Product maximal square-function estimate]\label{lem:square}
Suppose that $g=\sum_{I\in\mathcal H}\Delta_Ig$ for a finite
$\mathcal H\subset\cD$.  Then
\begin{equation}\label{eq:square}
 \|\mathcal M_{\cD}g\|_p\leq C_d\|S_{\cD}g\|_p,
 \qquad 1\leq p\leq\tfrac32,
\end{equation}
where $C_d=2^{6d+10}$.
\end{lemma}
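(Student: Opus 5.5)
The plan is to prove \eqref{eq:square} through a weighted $L^2$ estimate, with the weight built from the square function itself. This keeps all constants free of any maximal-function loss as $p\downarrow1$. The only maximal-type operators allowed are the $L^2$ ones, with constant $2^m$ from \eqref{eq:ordinary-maximal} at $p=2$, applied to functions whose $L^2$ norms we control.

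\emph{Step 1 (structure of averages).} For $I\in\cD$ and $g=\sum_{J\in\mathcal H}\Delta_Jg$, orthogonality of $\Delta_J$ in each coordinate gives
\[
E_Ig=\sum_{J\in\mathcal H,\ J^i\supsetneq I^i\ \forall i}E_I\Delta_Jg .
\]
Each such $\Delta_Jg$ is constant on $I$. Hence $\langle g\rangle_I$ is the value on $I$ of a partial sum over strictly larger rectangles. The truncated square function $S^{I}:=(\sum_{J\supsetneq I}|\Delta_Jg|^2)^{1/2}$ is constant on $I$ and satisfies $S^{I}\le S_{\cD}g$ on $I$. This is the product substitute for the "predictable" square function in Davis' proof.

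\emph{Step 2 (level-set decomposition of $S$).} Fix $\lambda>0$. Let $\Omega_k=\{S_{\cD}g>2^k\}$ and $\widetilde\Omega_k=\{M_{\cD}\1_{\Omega_k}>2^{-d-2}\}$. We have $|\widetilde\Omega_k|\le C_d|\Omega_k|$, by the $L^2$ bound for $M_{\cD}$ applied to an indicator; this is harmless for all $p$. Split $g=\sum_k g_k$, where $g_k$ collects the $\Delta_Jg$ with $J\subset\widetilde\Omega_k$ but $J\not\subset\widetilde\Omega_{k+1}$ (Chang--Fefferman style). Orthogonality and the standard Journé-free covering argument give
\[
\|g_k\|_2^2\le C_d\,4^{k}|\Omega_k| .
\]
The standard argument writes $\|g_k\|_2^2=\sum_J\|\Delta_J g\|_2^2$ and integrates $S^2$ over $\widetilde\Omega_k\setminus\Omega_{k+1}$, where $S\le2^{k+1}$ and each $J$ meets this set in a proportion $\ge 2^{-d-2}$.

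\emph{Step 3 (assembling).} Pointwise $\mathcal M_{\cD}g\le\sum_k\mathcal M_{\cD}g_k$. Since $\mathcal M_{\cD}g_k\le M_{\cD}g_k$, the $L^2$ maximal bound gives $\|\mathcal M_{\cD}g_k\|_2\le 2^m C_d2^k|\Omega_k|^{1/2}$. Moreover $\mathcal M_{\cD}g_k$ is supported in $\widehat\Omega_k:=\{M_{\cD}\1_{\widetilde\Omega_k}>1/2\}$, up to the averages over large rectangles. Those averages are handled by the cancellation from Step 1, since $\langle g_k\rangle_I=0$ unless $I\subset\widetilde\Omega_k$. Also $|\widehat\Omega_k|\le C_d|\Omega_k|$. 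By Hölder on $\widehat\Omega_k$ with exponents $2/p$ and $2/(2-p)$,
\[
\|\mathcal M_{\cD}g_k\|_p\le C_d\,2^k|\Omega_k|^{1/p}.
\]
To sum in $k$ without the triangle inequality loss, use $p\le 3/2<2$ and the $p$-triangle inequality $\|\sum_k u_k\|_p^p\le\sum_k\|u_k\|_p^p$, which is valid for $p\le1$ but not here. Instead I would sum level sets directly: on $\{\mathcal M_{\cD}g>2^{n}\}$, split into $k\ge n-c$ and $k<n-c$. The low part has $\sum_{k<n-c}\mathcal M g_k\le\sum_k\|\mathcal M g_k\|_\infty$-type bounds; here the $L^\infty$ control $|\langle g_k\rangle_I|\le C_d2^{k}$ comes from Step 1, because $S^I\le 2^{k+1}$ at some point of $I\not\subset\widetilde\Omega_{k+1}$. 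The high part is controlled by $\sum_{k\ge n-c}|\widehat\Omega_k|$. This yields $\sum_n 2^{np}|\{\mathcal M g>2^n\}|\le C_d\sum_k2^{kp}|\Omega_k|\le C_d\|S_{\cD}g\|_p^p$. The restriction $p\le3/2$ makes the geometric sums converge with a constant independent of $p$.

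The main obstacle is the low part of Step 3. We need a genuinely pointwise bound $|\langle g_{<n}\rangle_I|\lesssim2^n$ outside $\widehat\Omega_n$. It must use that $\Delta_Jg$ is constant on $I$ for $J\supsetneq I$ together with the bound on $S^I$ from Step 1, that is, a product "Davis decomposition" where no filtration ordering exists. I would first try it by bounding $\langle g_{<n}\rangle_I$ through Cauchy--Schwarz against $S$ restricted to $I\setminus\widetilde\Omega_n$. If this fails, I would instead estimate the low part in $L^2$ with weight $\1_{\complement\widehat\Omega_n}$ via the Step 2 bounds, and then apply Chebyshev.
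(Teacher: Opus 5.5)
Your Steps 1 and 2 match the paper's setup; your sorting rule ($J\subset\widetilde\Omega_k$, $J\not\subset\widetilde\Omega_{k+1}$) differs slightly from the paper's but is equally valid. The common ingredients are the level sets $\Omega_k=\{S_{\cD}g>2^k\}$, enlargements $\widetilde\Omega_k$ controlled by the $L^2$ maximal bound, and pieces $g_k$ with $\|g_k\|_2^2\lesssim4^k|\Omega_k|$. You also have the observation from Step 1 that $\langle g_k\rangle_I\neq0$ forces $I\subset J\subset\widetilde\Omega_k$ for some $J$ in $g_k$. Hence $\mathcal M_{\cD}g_k$ vanishes off $\widetilde\Omega_k$ itself, and $\widehat\Omega_k$ is unnecessary. (For the $L^2$ bound you need the proportion estimate on each child: $J\not\subset\widetilde\Omega_{k+1}$ gives $|J'\cap\Omega_{k+1}|\le\frac14|J'|$ for every $J'\in\ch(J)$. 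A proportion of $J$ alone would not suffice.)

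The gap is the low part of Step 3. The bound $|\langle g_k\rangle_I|\le C_d2^k$ does not follow from $S^I\le2^{k+1}$: the square function controls the $\ell^2$ sum of the constants $\Delta_Jg|_I$, not their plain sum. The pointwise bound $|\langle g_{<n}\rangle_I|\lesssim2^n$ off $\widehat\Omega_n$, which you call the main obstacle, is false, so no Cauchy--Schwarz argument can produce it.

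Take $m=d=1$ and $g=\varepsilon\sum_{j<N}h_j$ with $h_j=\1_{[0,2^{-j-1})}-\1_{[2^{-j-1},2^{-j})}$. Then $S_{\cD}g=\varepsilon\sqrt{j+1}$ on $[2^{-j-1},2^{-j})$, while $\langle g\rangle_{[0,2^{-j})}=\varepsilon j$. If $2^n\approx\varepsilon\sqrt{j_0}$, the sets $\Omega_n,\widetilde\Omega_n,\widehat\Omega_n$ are all $[0,2^{-j_0+C})$ with $C=C_d$. But at $j\approx j_0/2$, outside these sets, the averages of $g_{<n}$ are comparable to $2^n\sqrt{j_0}$. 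Equivalently, $|\{\mathcal M_{\cD}g>\lambda\}|$ is about $2^{-\lambda/\varepsilon}$ while $|\{S_{\cD}g>\lambda\}|$ is about $2^{-(\lambda/\varepsilon)^2}$. So no level-by-level bound $|\{\mathcal M_{\cD}g>2^n\}|\lesssim|\Omega_{n-c}|$ holds, and that is exactly what your primary plan would prove.

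Your fallback is the route that works, and it is the paper's argument. Since the $\widetilde\Omega_k$ decrease in $k$, for $x\notin\widetilde\Omega_N$ you have $\mathcal M_{\cD}g(x)=\mathcal M_{\cD}\bigl(\sum_{k<N}g_k\bigr)(x)\le M_{\cD}\bigl(\sum_{k<N}g_k\bigr)(x)$. Use the maximal function of the partial sum, not $\sum_k\mathcal M_{\cD}g_k$, so that orthogonality gives $\|\sum_{k<N}g_k\|_2^2=\sum_{k<N}\|g_k\|_2^2$. Chebyshev in $L^2$ then yields
\[
 |\{\mathcal M_{\cD}g>2^N\}|
 \leq|\widetilde\Omega_N|+4^m2^{-2N}\sum_{k<N}\|g_k\|_2^2
 \lesssim|\Omega_N|+2^{-2N}\sum_{k<N}2^{2k}|\Omega_k|.
\]
This bound has a nonlocal tail; it is not a good-$\lambda$ inequality. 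Multiply by $2^{pN}$ and interchange sums:
\[
 \sum_N2^{(p-2)N}\sum_{k<N}2^{2k}|\Omega_k|=(2^{2-p}-1)^{-1}\sum_k2^{pk}|\Omega_k|.
\]
This is where $p<2$ (and $p\le3/2$, for a uniform constant) is genuinely used. With Step 3 replaced by this argument, your proposal becomes a complete proof along the paper's lines.
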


\begin{proof}
All differences outside $\mathcal H$ vanish, so
$(S_{\cD}g)^2=\sum_{I\in\mathcal H}|\Delta_Ig|^2$.
We group these differences according to the size of $S_{\cD}g$ on
their support rectangles.  Set
\[
 \delta=2^{-(d+1)},\qquad
 \Omega_n=\{S_{\cD}g>2^n\},\qquad
 \widetilde\Omega_n=\{M_{\cD}\1_{\Omega_n}>\delta\}.
\]
By \eqref{eq:ordinary-maximal} with $p=2$ and $m\leq d$,
\begin{equation}\label{eq:square-halo}
 |\widetilde\Omega_n|
 \leq\delta^{-2}\|M_{\cD}\1_{\Omega_n}\|_2^2
 \leq4^m\delta^{-2}|\Omega_n|
 \leq2^{4d+5}|\Omega_n|.
\end{equation}
For each integer $n$, define
\[
 \mathcal H_n=\{I\in\mathcal H:
 |I\cap\Omega_n|>\delta|I|,
 \ |I\cap\Omega_{n+1}|\leq\delta|I|\},
 \qquad g_n=\sum_{I\in\mathcal H_n}\Delta_Ig.
\]
For a nonzero $\Delta_Ig$, choose $J\in\ch(I)$ on which
$\Delta_Ig$ is nonzero.
If $2^n<|\langle\Delta_Ig\rangle_J|$, then
\[
 J\subset\{|\Delta_Ig|>2^n\}\subset\Omega_n,
 \qquad |I\cap\Omega_n|\geq|J|=2^{-d}|I|=2\delta|I|.
\]
On the other hand, $\Omega_n=\varnothing$ when
$2^n\geq\|S_{\cD}g\|_\infty$.  Since $\Omega_{n+1}\subset\Omega_n$,
\[
 I\in\mathcal H_n
 \quad\Longleftrightarrow\quad
 n=\max\{k\in\Z:|I\cap\Omega_k|>\delta|I|\}.
\]
Thus every nonzero difference belongs to exactly one sum, and
$g=\sum_n g_n$.

For $I\in\mathcal H_n$, we have
\[
 M_{\cD}\1_{\Omega_n}(x)
 \geq\frac{|I\cap\Omega_n|}{|I|}>\delta\quad(x\in I),
 \qquad I\subset\widetilde\Omega_n.
\]
For every $J\in\ch(I)$,
\[
 |J\cap\Omega_{n+1}|\leq\delta|I|=\tfrac12|J|,
 \qquad |J\setminus\Omega_{n+1}|\geq\tfrac12|J|.
\]
Constancy of $\Delta_Ig$ on $J$ therefore gives
\[
 \int_J|\Delta_Ig|^2
 =\frac{|J|}{|J\setminus\Omega_{n+1}|}
       \int_{J\setminus\Omega_{n+1}}|\Delta_Ig|^2
 \leq2\int_{J\setminus\Omega_{n+1}}|\Delta_Ig|^2.
\]
Sum over $J\in\ch(I)$, and then over $I\in\mathcal H_n$.
Orthogonality and the support of $\Delta_Ig$ in $I$ give
\begin{align}
 \|g_n\|_2^2
 &=\sum_{I\in\mathcal H_n}\|\Delta_Ig\|_2^2
 \leq2\sum_{I\in\mathcal H_n}
       \int_{I\setminus\Omega_{n+1}}|\Delta_Ig|^2\notag\\
 &=2\int_{\widetilde\Omega_n\setminus\Omega_{n+1}}
            \sum_{I\in\mathcal H_n}|\Delta_Ig|^2\notag\\
 &\leq2\int_{\widetilde\Omega_n\setminus\Omega_{n+1}}
               (S_{\cD}g)^2
 \leq2^{2n+3}|\widetilde\Omega_n|\notag\\
 &\leq2^{2n+3}4^m\delta^{-2}|\Omega_n|
 \leq2^{4d+5}2^{2n}|\Omega_n|.\label{eq:square-piece}
\end{align}

We next check where the averages of $g_n$ can be nonzero.  In one
coordinate, dyadicity and cancellation give the identity
\begin{equation}\label{eq:average-difference}
 E_L\Delta_Qg=
 \begin{cases}
 \1_L\Delta_Qg,&L\subsetneq Q,\\
 0,&\text{otherwise}.
 \end{cases}
\end{equation}
Consequently, for $I\in\mathcal H_n$,
\[
 E_R\Delta_Ig(x)\ne0
 \ \Longrightarrow\
 x\in R,\quad R^i\subsetneq I^i\ (1\leq i\leq m)
 \ \Longrightarrow\
 x\in I\subset\widetilde\Omega_n.
\]
Thus
\begin{equation}\label{eq:square-piece-maximal-support}
 \{\mathcal M_{\cD}g_n>0\}\subset\widetilde\Omega_n.
\end{equation}

Fix $N\in\Z$.  To estimate the level set
$\{\mathcal M_{\cD}g>2^N\}$, we separate the terms with $n<N$
from those with $n\geq N$.
If $n\geq N$, then $\widetilde\Omega_n\subset\widetilde\Omega_N$.
For $x\notin\widetilde\Omega_N$ and $n\geq N$,
\eqref{eq:square-piece-maximal-support} therefore gives
$\mathcal M_{\cD}g_n(x)=0$.  Consequently, for every $R\in\cD$,
\[
 |E_Rg_n(x)|
 =\1_R(x)|\langle g_n\rangle_R|
 \leq\mathcal M_{\cD}g_n(x)=0.
\]
Thus
\[
 \mathcal M_{\cD}g(x)
 =\mathcal M_{\cD}\!\left(\sum_{n<N}g_n\right)(x)
 \leq M_{\cD}\!\left(\sum_{n<N}g_n\right)(x),
 \qquad x\notin\widetilde\Omega_N.
\]
By orthogonality,
$\|\sum_{n<N}g_n\|_2^2=\sum_{n<N}\|g_n\|_2^2$.
Chebyshev's inequality and \eqref{eq:ordinary-maximal} now yield
\begin{align}
 |\{\mathcal M_{\cD}g>2^N\}|
 &\leq|\widetilde\Omega_N|
       +2^{-2N}\left\|M_{\cD}\!\left(\sum_{n<N}g_n\right)\right\|_2^2
       \notag\\
 &\leq|\widetilde\Omega_N|
       +4^m2^{-2N}\sum_{n<N}\|g_n\|_2^2\notag\\
 &\leq2^{6d+5}\left(|\Omega_N|
       +2^{-2N}\sum_{n<N}2^{2n}|\Omega_n|\right).
       \label{eq:square-distribution}
\end{align}
Multiply by $2^{pN}$ and sum over $N$.  For the second term,
interchange the sums and set $j=N-n$:
\begin{align*}
 \sum_N2^{(p-2)N}\sum_{n<N}2^{2n}|\Omega_n|
 &=\sum_n2^{2n}|\Omega_n|\sum_{N>n}2^{(p-2)N}\\
 &=\sum_n2^{pn}|\Omega_n|
       \sum_{j\geq1}2^{-(2-p)j}\\
 &=\frac1{2^{2-p}-1}\sum_n2^{pn}|\Omega_n|
 \leq3\sum_n2^{pn}|\Omega_n|,
\end{align*}
where $1\leq p\leq3/2$.  Finally, for a nonnegative $H$,
\[
 \tfrac14 H(x)^p
 \leq\sum_{2^n<H(x)}2^{pn}
 \leq2H(x)^p.
\]
Integration and \eqref{eq:square-distribution} therefore give
\begin{align*}
 \|\mathcal M_{\cD}g\|_p^p
 &\leq4\sum_N2^{pN}|\{\mathcal M_{\cD}g>2^N\}|\\
 &\leq2^{6d+9}\sum_n2^{pn}|\Omega_n|\\
 &\leq2^{6d+10}\|S_{\cD}g\|_p^p.
\end{align*}
Taking $p$th roots and using $p\geq1$ proves \eqref{eq:square}
with the stated constant.
\end{proof}

\section{Maximal estimates and sparse overlap}\label{sec:maximal}

We reduce the incomparable maximal estimate to a finite, localized
setting and then deduce the sparse-overlap theorem by duality.
The main technical work is carried out in
Sections~\ref{sec:geometry} and~\ref{sec:square-estimates}, which
establish the finite estimate in Lemma~\ref{lem:represented-bound}.

\subsection{The incomparable maximal estimate}

\begin{theorem}[Incomparable maximal estimate]\label{thm:maximal}
Let $m\geq2$, and let $\mathcal G$ be an incomparable family in
$\cD^1\times\cdots\times\cD^m$ on
$\R^{d_1}\times\cdots\times\R^{d_m}$.  Then
\begin{equation}\label{eq:maximal-theorem}
 \|M_{\mathcal G}f\|_p
 \leq C_{m,d}(p')^{m-1}\|f\|_p,
 \qquad1<p\leq2.
\end{equation}
The constant depends only on $m$ and $d$.
\end{theorem}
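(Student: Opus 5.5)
We follow the strategy outlined in the Introduction, beginning with a reduction. By monotone convergence it suffices to take $\mathcal G$ finite and $f\geq0$ bounded with compact support. Since $\mathcal G$ is finite, $f$ may be replaced by $E_{\mathcal Q}f=\sum_{Q}E_Qf$ over a fine dyadic partition $\mathcal Q$ into rectangles smaller in every coordinate than all members of $\mathcal G$. This does not change any average over $I\in\mathcal G$, and it does not increase $\|f\|_p$. Averages over members of $\mathcal G$ then see only finitely many product martingale differences, up to a coarse-scale average. We remove that average by subtracting $E_{\mathcal P}f$ on large rectangles $\mathcal P$. The resulting error is controlled by letting the coarse scale tend to infinity, since coarse averages of $f$ tend to zero in $L^p$ for $p>1$.

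Next we construct the common function. Let $F$ be $f$ minus all differences $\Delta_Kf$ with $K\subsetneq I$ in every coordinate, for some $I\in\mathcal G$; more precisely, we remove those differences indexed by strict subrectangles of members of $\mathcal G$. By \eqref{eq:average-difference}, $E_IF=E_If$ for each $I\in\mathcal G$. Incomparability ensures that every remaining difference that contributes to $\langle F\rangle_I$ fails to be a strict subrectangle of $I$ in at least one coordinate $j$. We group the terms by the first such coordinate. This expresses $\langle f\rangle_I$ as a bounded sum of averages over $I^{A_j}$ of functions obtained from $F$ by averaging in coordinate $j$ over a cube $\supset I^j$. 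The result is the pointwise bound
\[
 M_{\mathcal G}f\leq C_m\sum_{j=1}^m\mathcal M_{\cD^{A_j}}F_j,
\]
where $F_j$ is a martingale sum in the $A_j$ coordinates.

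Now fix the other coordinate and apply Lemma~\ref{lem:square} in the $m-1$ coordinates $A_j$. This requires $p\leq3/2$; the range $3/2<p\leq2$ follows from \eqref{eq:ordinary-maximal} together with crude $L^p$ bounds. The lemma gives $\|\mathcal M_{A_j}F_j\|_p\lesssim\|S_{A_j}F_j\|_p$ with no loss in $p$. The identity $\Delta_{K^{A_j}}F=X_{K^{A_j}}\Delta_{K^{A_j}}f$ holds, where $X_{K^{A_j}}$ averages in coordinate $j$ over a partition determined by $K^{A_j}$ and $\mathcal G$, and this partition is coarser for smaller $K$ by monotonicity. Hence we must bound
\[
 \Bigl\|\Bigl(\sum_{K^{A_j}}|X_{K^{A_j}}\Delta_{K^{A_j}}f|^2\Bigr)^{1/2}\Bigr\|_p\lesssim(p')^{m-1}\|f\|_p .
\]

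This last estimate is the main obstacle, because $X_{K^{A_j}}$ varies with the index and cannot be moved outside the square function. The plan here is weighted. Set $w=(M_jS_{A_j}f)^{p-2}$ (or a similar weight built from the one-parameter maximal function in coordinate $j$ applied to $f$). We then use $|X_K\Delta_Kf|^2\leq X_K|\Delta_Kf|^2\leq M_j|\Delta_K f|^2$ pointwise, and apply H\"older in the form $\|G\|_p^p\leq(\int G^2w)^{p/2}(\int w^{-p/(2-p)})^{1-p/2}$. Next, a weighted $L^2$ square-function estimate in the $A_j$ coordinates would be needed with constant $\lesssim(p-1)^{-(m-1)}$, proved by duality with the one-coordinate maximal function in $j$ and the $(m-1)$-parameter maximal function, costing one factor of $p'$ per coordinate in $A_j$. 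Finally, $\|S_{A_j}F\|_p$-type quantities appearing on the right are absorbed. The first thing to try is to make the weight $(M_jf)^{p-2}$ and exploit $X_K\leq M_j$ in a Fefferman--Stein manner. The key point to verify is that no extra factor $p'$ arises from coordinate $j$, which should hold because the averages in $j$ are genuine averages and therefore bounded on $L^\infty$.
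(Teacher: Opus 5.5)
\textbf{Verdict: there is a genuine gap.} Your outline reproduces the architecture from the Introduction: the finite reduction, a common projection $F$, Lemma~\ref{lem:square} in the coordinates $A_j$, and the identity $\Delta_{K^{A_j}}F=X_{K^{A_j}}\Delta_{K^{A_j}}f$. But the decisive step is missing. The square-function bound for $X_{K^{A_j}}\Delta_{K^{A_j}}f$ with no loss in coordinate $j$ is only a plan, and your ``key point to verify'' is the whole difficulty. The route you sketch for it fails.

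\emph{Replacing $X_{K^{A_j}}$ by $M_j$ loses too much.} It discards the exact identity $\int_{I_0^j}X_{K^{A_j}}g\ud x_j=\int_{I_0^j}g\ud x_j$. Moreover, $M_j$ applied to the $L^1$-type quantities $|\Delta_{K^{A_j}}f|^2$ has no weighted bound of Fefferman--Stein type at exponent one.

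\emph{Keeping $X_{K^{A_j}}$ forces the weight.} A weight can only pass through $X_{K^{A_j}}$ by Cauchy--Schwarz and Jensen on each partition cube $P^j$:
\[
 \frac{|E_{P^j}u|^2}{(E_{P^j}v)^{2-p}}\leq E_{P^j}\Bigl(\frac{u^2}{v^{2-p}}\Bigr),
 \qquad u=\Delta_{K^{A_j}}f,\quad v=E_{K^{A_j}}f.
\]
Here $v$ must be averaged at the scale of $K^{A_j}$; a pointwise weight such as $f^{2-p}$ does not give a bounded square sum. So the denominator must dominate $X_{K^{A_j}}E_{K^{A_j}}f(x)=\langle f\rangle_{K^{A_j}\times P^j}$. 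This is a genuinely $m$-parameter average, and neither $M_jf$ nor $M_jS_{A_j}f$ controls it. If you use the strong maximal function instead, the H\"older step costs $\|M_{\cD}f\|_p^{1-p/2}\lesssim(p')^{m(1-p/2)}\|f\|_p^{1-p/2}$. You then end with $(p')^{m-1/2}$, not $(p')^{m-1}$.

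\emph{What the paper supplies that your proposal lacks.}
\begin{itemize}
\item The family $\mathcal B$ of averaging rectangles $K^{A_j}\times P^j$, for which $X_{K^{A_j}}E_{K^{A_j}}f\leq M_{\mathcal B}f$.
\item Preservation of averages and the $(m-1)$-coordinate representation on all of $\mathcal B$, not only on $\mathcal G$ (Lemma~\ref{lem:representation}). This is what lets Lemma~\ref{lem:maximal-to-square} bound $\|M_{\mathcal B}f\|_p$ itself.
\item A weighted square-function estimate with the $K$-dependent weights $E_{K^A}f$. It comes from convexity of $\Psi_p(a,b)=a^2/b^{2-p}$, not from duality, costs $(p-1)^{-1}$ per coordinate, and iterates down to $\int f^2/f^{2-p}=\|f\|_p^p$.
\item Absorption of $\|M_{\mathcal B}f\|_p^{1-p/2}$, which works because the same norm sits on the left.
\end{itemize}
This argument uses $f>0$ throughout, both for the weights $E_{K^{A_j}}f$ and for $E_R|f|=E_RF$. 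That is why the paper localizes to top cubes $I_0$ and treats the top averages as separate terms costing $(p')^{m-2}$. Your subtraction of coarse averages instead makes the input sign-changing.

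\emph{Your $F$ is also defined incorrectly.} Removing only strict subrectangles of members of $\mathcal G$ keeps $\Delta_If$. In your coordinatewise-strict reading it also keeps every $\Delta_Lf$ with $L\subset I$ and $L^i=I^i$ for some $i$. Then $\prod_i(\Id_i-E_{I^i})F$ need not vanish on $I$, and $E_IF$ is no longer a combination of averages in at most $m-1$ coordinates. You must remove $\Delta_Lf$ for every $L\subset I$. The representation then comes from expanding this vanishing product, not from grouping terms by a first coordinate; your grouping would in any case produce $I$-dependent functions rather than one common $F$.
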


\begin{proof}
It suffices to consider finite $\mathcal G$: if
$\mathcal G_n\uparrow\mathcal G$ through finite subfamilies, then
$M_{\mathcal G_n}f\uparrow M_{\mathcal G}f$, so monotone convergence
passes the uniform estimate to $\mathcal G$.

Choose $a,k\in\Z$ so that
\[
 2^{-k}\leq\ell(I^i)\leq2^a,
 \qquad I\in\mathcal G,\quad1\leq i\leq m.
\]
The cubes $I_0\in\cD$ with $\ell(I_0)=2^a$
are pairwise disjoint, and each $I\in\mathcal G$ is contained in
exactly one of them.
Set $\mathcal G(I_0)=\{I\in\mathcal G:I\subset I_0\}$.  Since
the maximal function associated with this subfamily is supported
on $I_0$,
\begin{equation}\label{eq:large-cube-localization}
 M_{\mathcal G}f
 =\sum_{I_0}M_{\mathcal G(I_0)}(f\1_{I_0}),
 \qquad
 \|M_{\mathcal G}f\|_p^p
 =\sum_{I_0}\|M_{\mathcal G(I_0)}(f\1_{I_0})\|_p^p.
\end{equation}

Fix one $I_0$ and define, summing over cubes $J$,
\[
 \widetilde f
 =\sum_{\substack{J\in\cD,\ J\subset I_0\\
                  \ell(J)=2^{-k}}}E_J|f|.
\]
Every $I\in\mathcal G(I_0)$ is a union of these cubes.
Summing over those $J\subset I$ gives
\[
 \langle\widetilde f\rangle_I
 =\frac1{|I|}\sum_J\int_J\widetilde f
 =\frac1{|I|}\sum_J|J|\langle|f|\rangle_J
 =\langle|f|\rangle_I.
\]
Consequently,
\begin{equation}\label{eq:input-averaging}
 M_{\mathcal G(I_0)}f=M_{\mathcal G(I_0)}\widetilde f,
 \qquad \|\widetilde f\|_p\leq\|f\1_{I_0}\|_p.
\end{equation}
For $\varepsilon>0$, the function
$\widetilde f+\varepsilon\1_{I_0}$ is strictly positive on $I_0$
and constant on the smallest product cubes.  Thus
\eqref{eq:input-averaging} and Lemma~\ref{lem:represented-bound} give,
for $1<p\leq3/2$,
\begin{align*}
 \|M_{\mathcal G(I_0)}f\|_p
 &=\|M_{\mathcal G(I_0)}\widetilde f\|_p\\
 &\leq\|M_{\mathcal G(I_0)}
             (\widetilde f+\varepsilon\1_{I_0})\|_p\\
 &\leq C_{m,d}(p')^{m-1}\|\widetilde f+\varepsilon\1_{I_0}\|_p.
\end{align*}
Letting $\varepsilon\downarrow0$ gives
\[
 \|M_{\mathcal G(I_0)}f\|_p
 \leq C_{m,d}(p')^{m-1}\|\widetilde f\|_p
 \leq C_{m,d}(p')^{m-1}\|f\1_{I_0}\|_p.
\]
For $3/2\leq p\leq2$,
the ordinary product maximal bound gives $(p')^m\leq3^m$, and hence
the same estimate in the remaining range.  Summing by
\eqref{eq:large-cube-localization} gives
\[
 \|M_{\mathcal G}f\|_p^p
 \leq C_{m,d}^p(p')^{(m-1)p}\sum_{I_0}\|f\1_{I_0}\|_p^p
 =C_{m,d}^p(p')^{(m-1)p}\|f\|_p^p.
\]
\end{proof}

\subsection{Sparse incomparable overlap}\label{sec:consequences}

The maximal estimate in Theorem~\ref{thm:maximal} gives the following
overlap bounds by duality.
Rey~\cite{Rey} states his conjecture for $1/2$-sparse families;
the estimate below allows any sparsity parameter $\eta\in(0,1]$.

\begin{theorem}[Sparse incomparable overlap]\label{thm:overlap}
Let $\mathcal G\subset\cD$ be incomparable and $\eta$-sparse,
with finite shadow.  Then
\begin{equation}\label{eq:overlap-moments}
 \|h_{\mathcal G}\|_q
 \leq C_{m,d}\eta^{-1}q^{m-1}|\sh(\mathcal G)|^{1/q},
 \qquad q\geq2,
\end{equation}
and
\begin{equation}\label{eq:overlap-exponential}
 \int_{\sh(\mathcal G)}
 \left[\exp\!\left(c_{m,d}(\eta h_{\mathcal G})^{1/(m-1)}\right)-1\right]
 \leq C_{m,d}|\sh(\mathcal G)|.
\end{equation}
The constants depend only on $m$ and $d$.
\end{theorem}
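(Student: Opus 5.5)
We derive the moment bound \eqref{eq:overlap-moments} from Theorem~\ref{thm:maximal} by duality, and then obtain \eqref{eq:overlap-exponential} by summing the Taylor series of the exponential. We may assume $\mathcal G$ is finite. The moment bound is uniform in $\mathcal G$, so monotone convergence of $h_{\mathcal G_n}\uparrow h_{\mathcal G}$ along finite subfamilies passes both estimates to the general case. Fix $q\geq2$ and let $p=q'\in(1,2]$. Then $p'=q$, and Theorem~\ref{thm:maximal} gives $\|M_{\mathcal G}\|_{L^p\to L^p}\leq C_{m,d}q^{m-1}$.

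For the duality step, take $g\geq0$ with $\|g\|_p=1$ and write $\int h_{\mathcal G}g=\sum_{I\in\mathcal G}|I|\langle g\rangle_I$. Sparseness gives $|I|\leq\eta^{-1}|E_I|$. On $E_I\subset I$ we have $\langle g\rangle_I\leq M_{\mathcal G}g$. The sets $E_I$ are pairwise disjoint and contained in $\sh(\mathcal G)$, so
\[
 \int h_{\mathcal G}g\leq\eta^{-1}\sum_{I}\int_{E_I}M_{\mathcal G}g
 \leq\eta^{-1}\int_{\sh(\mathcal G)}M_{\mathcal G}g
 \leq\eta^{-1}|\sh(\mathcal G)|^{1/q}\|M_{\mathcal G}g\|_p,
\]
where the last step is H\"older's inequality. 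Applying the maximal bound and taking the supremum over $g$ yields \eqref{eq:overlap-moments}. No further ingredient is needed, since $h_{\mathcal G}$ is supported in $\sh(\mathcal G)$.

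For the exponential estimate, set $\alpha=1/(m-1)$ and expand
\[
 \int_{\sh(\mathcal G)}\bigl[e^{c(\eta h)^{\alpha}}-1\bigr]
 =\sum_{k\geq1}\frac{c^k\eta^{k\alpha}}{k!}\int h^{k\alpha}.
\]
When $k\alpha\geq2$, use \eqref{eq:overlap-moments} with $q=k\alpha$:
\[
 \int h^{k\alpha}\leq(C\eta^{-1})^{k\alpha}(k\alpha)^{k}|\sh(\mathcal G)|,
\]
because $q^{(m-1)q}=(k\alpha)^k$. The powers of $\eta$ cancel, so the $k$th term is at most
\[
 \frac{(cC^{\alpha}\alpha)^k k^k}{k!}|\sh(\mathcal G)|\leq(ecC^\alpha\alpha)^k|\sh(\mathcal G)|
\]
by Stirling. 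This series is summable once $c_{m,d}$ is small enough.

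For the finitely many $k$ with $k\alpha<2$, apply H\"older's inequality on $\sh(\mathcal G)$ with the $q=2$ bound:
\[
 \int h^{k\alpha}\leq\|h\|_2^{k\alpha}|\sh(\mathcal G)|^{1-k\alpha/2}\lesssim\eta^{-k\alpha}|\sh(\mathcal G)|.
\]
These terms are therefore also controlled. Everything here is routine; all the difficulty lies in Theorem~\ref{thm:maximal}. The only point needing care is tracking the exponents: the factor $q^{m-1}$ with $q=k/(m-1)$ must produce exactly $k^k$, so that it cancels against $k!$.
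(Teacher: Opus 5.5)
Your proof is correct and is essentially the paper's argument: the moment bound comes from the same duality with the sparse sets $E_I$ and Theorem~\ref{thm:maximal} at $p=q'$. The only difference is the last step. You sum the Taylor series using the moments at $q=k/(m-1)$ and $k^k/k!\le e^k$, while the paper first derives the tail bound $|\{\eta h_{\mathcal G}>t\}|\le C_0e^{-c_0t^{1/(m-1)}}|\sh(\mathcal G)|$ by Chebyshev with $q\approx t^{1/(m-1)}$ and then integrates it by the layer-cake formula; both are standard ways of converting $q^{m-1}$ moment growth into exponential integrability.
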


\begin{proof}
First take $\mathcal G$ finite.  For $g\geq0$, the disjoint sets
$E_I$ from sparseness and Theorem~\ref{thm:maximal} with $p=q'$
give
\begin{align*}
 \int h_{\mathcal G}g
 &=\sum_{I\in\mathcal G}|I|\langle g\rangle_I
 \leq\eta^{-1}\sum_{I\in\mathcal G}\int_{E_I}M_{\mathcal G}g\\
 &\leq\eta^{-1}\int_{\sh(\mathcal G)}M_{\mathcal G}g
 \leq C_{m,d}\eta^{-1}q^{m-1}
           |\sh(\mathcal G)|^{1/q}\|g\|_{q'}.
\end{align*}
Duality proves \eqref{eq:overlap-moments}.

For the exponential estimate, write $H=\eta h_{\mathcal G}$.
Chebyshev's inequality gives
\[
 \frac{|\{x\in\sh(\mathcal G):H(x)>t\}|}{|\sh(\mathcal G)|}
 \leq\frac{\|H\|_q^q}{t^q|\sh(\mathcal G)|}
 \leq\left(\frac{C_{m,d}q^{m-1}}t\right)^q.
\]
For sufficiently large $t$, choose
$q=(t/(e^2C_{m,d}))^{1/(m-1)}\geq2$.  Then
$(C_{m,d}q^{m-1}/t)^q=(e^{-2})^q=e^{-2q}$.
For smaller $t$, use the trivial bound
$|\{H>t\}|\leq|\sh(\mathcal G)|$.
Thus there are constants $c_0,C_0>0$, depending only on $m$ and $d$,
such that
\[
 |\{H>t\}|
 \leq C_0\exp(-c_0t^{1/(m-1)})|\sh(\mathcal G)|,
 \qquad t\geq0.
\]
For $0<c<c_0$, write the exponential as an integral and interchange
the integrals:
\begin{align*}
 \int_{\sh(\mathcal G)}(e^{cH^{1/(m-1)}}-1)
 &=\int_{\sh(\mathcal G)}\int_0^{H(x)^{1/(m-1)}}ce^{cs}\ud s\ud x\\
 &=\int_0^\infty ce^{cs}
       |\{x:H(x)^{1/(m-1)}>s\}|\ud s\\
 &\leq C_0c\int_0^\infty e^{-(c_0-c)s}\ud s
       |\sh(\mathcal G)|.
\end{align*}
This proves \eqref{eq:overlap-exponential}.  Finite exhaustion and
monotone convergence extend both estimates to the stated families.
\end{proof}

\section{Cross-coordinate averaging and the projection}
\label{sec:geometry}

For this section and the next, fix $\mathcal G,f,I_0,k$ under the
assumptions of Lemma~\ref{lem:represented-bound}.  The reduction in
Section~\ref{sec:maximal} has already brought us to this setting.
Here we develop the representation used to prove the estimate in
Section~\ref{sec:square-estimates}.
The averages $E_If$, $I\in\mathcal G$, are the quantities we want
to estimate.  We construct a function $F$ with $E_IF=E_If$ for
$I\in\mathcal G$, and with $\Delta_LF=0$ whenever $L\in\cD$
satisfies $L\subset I$ for some $I\in\mathcal G$.
This gives a representation of $E_If$ using
averages in at most $m-1$ coordinates.  Cross-coordinate averaging
will describe the remaining differences of $F$.
The final subsection uses Lemma~\ref{lem:square} to reduce the
maximal-function estimate to square functions of $F$.

\subsection{Finite setting and the projection}

Set
\[
 \cD_0^i=\{Q^i\in\cD^i:Q^i\subset I_0^i,
                         \ \ell(Q^i)\geq2^{-k}\},
 \qquad
 (\cD_0^i)^\circ=\{Q^i\in\cD_0^i:\ell(Q^i)>2^{-k}\}.
\]
The original grids $\cD^i$ remain unchanged.  We use the finite
collections only in this construction.  All norms and integrals in
these two sections are on $I_0$ or the coordinate factors indicated.
Functions are extended by zero outside $I_0$.

For $A\subset\{1,\ldots,m\}$, write
\[
 \cD_0^A=\prod_{i\in A}\cD_0^i,\qquad
 (\cD_0^A)^\circ=\prod_{i\in A}(\cD_0^i)^\circ.
\]
When $A=\{1,\ldots,m\}$, we omit the superscript $A$; in particular,
the full product families are denoted by $\cD_0$ and $\cD_0^\circ$.

\begin{definition}[The projection]\label{def:projection}
Define
\begin{equation}\label{eq:projection}
\begin{aligned}
 F&:=f-\sum_{\substack{L\in\cD\\
                  L\subset I\text{ for some }I\in\mathcal G}}
                  \Delta_Lf\\
  &=f-\sum_{\substack{L\in\cD_0^\circ\\
                  L\subset I\text{ for some }I\in\mathcal G}}
                  \Delta_Lf.
\end{aligned}
\end{equation}
Each rectangle $L$ occurs once, even if it is contained in several
members of $\mathcal G$.
\end{definition}

For $L\in\cD$ with $L\subset I_0$, constancy of $f$ on the
smallest cubes gives
\[
 \ell(L^i)\leq2^{-k}\text{ for some }i
 \quad\Longrightarrow\quad \Delta_Lf=0.
\]
This proves the second equality in \eqref{eq:projection}.
Each $\Delta_Lf$ in the finite sum is constant on the rectangles
in $\ch(L)$, which are unions of smallest cubes since
$L\in\cD_0^\circ$.  Hence $F$ is also constant on the smallest
cubes.
Thus averages are indexed by $\cD_0$, while restricting difference
sums to $\cD_0^\circ$ omits only zero terms.

The function $F$ need not be nonnegative.  Orthogonality and
cancellation, together with the preceding observation, give
\begin{equation}\label{eq:projection-properties}
\begin{aligned}
 E_{I_0^i}F&=E_{I_0^i}f &&(1\leq i\leq m),\\
 \Delta_LF&=0 &&(L\in\cD,\ L\subset I\in\mathcal G).
\end{aligned}
\end{equation}

We now verify the averaging property promised above:
\begin{equation}\label{eq:original-averages}
 E_IF=E_If,\qquad I\in\mathcal G.
\end{equation}
Fix $I\in\mathcal G$ and an index $L\subset J\in\mathcal G$
from the sum in \eqref{eq:projection}.  By \eqref{eq:average-difference},
\[
\begin{aligned}
 E_I\Delta_Lf\ne0
 &\quad\Longrightarrow\quad
 I^i\subsetneq L^i\subset J^i\quad(1\leq i\leq m)\\
 &\quad\Longrightarrow\quad I\subsetneq J.
\end{aligned}
\]
Incomparability excludes this.  Thus every subtracted difference
has zero average on $I$, proving \eqref{eq:original-averages}.

\subsection{The cross-coordinate averages}

We next express the martingale differences of $F$ in $m-1$ coordinates
as averages, in the remaining coordinate, of the corresponding
differences of $f$.

For $j\in\{1,\ldots,m\}$, put
$A_j:=\{1,\ldots,m\}\setminus\{j\}$.
For $K^{A_j}\in\cD_0^{A_j}$, set
\[
 \mathcal A_{K^{A_j}}^j
 =\{I^j:I\in\mathcal G,\ K^{A_j}\subset I^{A_j}\}
 \ \cup\ \{Q^j\in\cD_0^j:\ell(Q^j)=2^{-k}\}.
\]
Let $\mathcal P_{K^{A_j}}^j$ consist of the maximal cubes of
$\mathcal A_{K^{A_j}}^j$.  Since $\mathcal A_{K^{A_j}}^j$ contains all the smallest cubes,
$\mathcal P_{K^{A_j}}^j$ partitions $I_0^j$.  For a function of $x_j$,
define
\begin{equation}\label{eq:cross-average}
 X_{K^{A_j}}g
 =\sum_{P^j\in\mathcal P_{K^{A_j}}^j}E_{P^j}g.
\end{equation}
On the product space it acts in the $j$th coordinate.  This is a
cross-coordinate averaging operator: its partition is determined by
the cubes $K^i$ in the other coordinates.  Both the partition and
the operator depend on the fixed family $\mathcal G$.

\begin{lemma}[Differences of the projected function]\label{lem:cross}
For every $j$ and $K^{A_j}\in(\cD_0^{A_j})^\circ$ as above,
\begin{equation}\label{eq:cross-projection}
 \Delta_{K^{A_j}}F=X_{K^{A_j}}\Delta_{K^{A_j}}f.
\end{equation}
\end{lemma}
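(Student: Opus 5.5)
The plan is to expand everything in the $j$th coordinate only, and then check that both sides of \eqref{eq:cross-projection} have the same coordinate-$j$ martingale expansion.

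\textbf{Expanding the right-hand side.} Fix $j$ and $K^{A_j}\in(\cD_0^{A_j})^\circ$, and put $g=\Delta_{K^{A_j}}f$. Since $f$ is constant on the smallest cubes, the finite one-coordinate martingale expansion in $x_j$ gives
\[
 g=E_{I_0^j}g+\sum_{Q^j\in(\cD_0^j)^\circ}\Delta_{Q^j}g,
 \qquad
 \Delta_{Q^j}g=\Delta_{K^{A_j}\times Q^j}f .
\]
I would first prove the purely one-coordinate identity
\begin{equation*}
 X_{K^{A_j}}g
 =E_{I_0^j}g+\sum_{\substack{Q^j\in(\cD_0^j)^\circ\\
   Q^j\not\subset P^j\ \forall P^j\in\mathcal P^j_{K^{A_j}}}}\Delta_{Q^j}g .
\end{equation*}
This holds for any partition $\mathcal P$ of $I_0^j$ into dyadic cubes of $\cD_0^j$. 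By nestedness, a cube $Q^j$ either lies inside some $P^j$ or strictly contains every $P^j$ it meets. Fix $x$ in some $P^j$. The terms $\Delta_{Q^j}g(x)$ with $Q^j\subset P^j$ are excluded from the sum. The terms with $Q^j\subset P'^j\ne P^j$ vanish at $x$. The remaining terms are those with $Q^j\supsetneq P^j$, and these form a chain from $I_0^j$ down to the parent of $P^j$. Their sum telescopes to $E_{P^j}g(x)-E_{I_0^j}g(x)$.

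\textbf{Expanding the left-hand side.} Apply $\Delta_{K^{A_j}}$ to \eqref{eq:projection}. In each coordinate $i\in A_j$, orthogonality of one-dimensional differences gives $\Delta_{K^i}\Delta_{L^i}=\Delta_{K^i}$ if $L^i=K^i$, and $0$ otherwise. Hence
\[
 \Delta_{K^{A_j}}\Delta_Lf=
 \begin{cases}
 \Delta_{K^{A_j}\times L^j}f,&L^{A_j}=K^{A_j},\\
 0,&\text{otherwise}.
 \end{cases}
\]
So only the rectangles $L=K^{A_j}\times Q^j$ with $Q^j\in(\cD_0^j)^\circ$ and $L\subset I$ for some $I\in\mathcal G$ survive. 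This gives
\[
 \Delta_{K^{A_j}}F=g-\sum_{Q^j\in\mathcal S}\Delta_{Q^j}g,
\]
where $\mathcal S$ is the set of $Q^j\in(\cD_0^j)^\circ$ such that $Q^j\subset I^j$ for some $I\in\mathcal G$ with $K^{A_j}\subset I^{A_j}$.

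\textbf{Matching the two expansions.} It remains to show that, for $Q^j\in(\cD_0^j)^\circ$, we have $Q^j\in\mathcal S$ if and only if $Q^j$ is contained in some member of $\mathcal P^j_{K^{A_j}}$.

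For the forward direction, if $Q^j\in\mathcal S$, then $Q^j$ lies in a cube of $\mathcal A^j_{K^{A_j}}$, and hence in a maximal one.

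For the converse, suppose $Q^j\subset P^j\in\mathcal P^j_{K^{A_j}}$. Then $P^j$ is either some $I^j$ with $K^{A_j}\subset I^{A_j}$, which places $Q^j$ in $\mathcal S$, or a smallest cube of side $2^{-k}$. The latter is impossible, since $\ell(Q^j)>2^{-k}$.

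Subtracting therefore leaves exactly $E_{I_0^j}g$ plus the sum over the $Q^j$ not contained in any $P^j$. By the identity in the first step, this equals $X_{K^{A_j}}g$.

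\textbf{Main obstacle.} I expect the only delicate point to be the bookkeeping in the last two steps. One must make sure that the orthogonality collapse $\Delta_{K^{A_j}}\Delta_L$ forces $L^{A_j}=K^{A_j}$ exactly. One must also handle correctly the smallest cubes that were artificially added to $\mathcal A^j_{K^{A_j}}$, and the members $I^j$ that may themselves have side $2^{-k}$; both kinds contribute no differences in $(\cD_0^j)^\circ$.
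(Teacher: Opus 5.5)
Your proof is correct and follows the paper's argument: the same orthogonality collapse to $L^{A_j}=K^{A_j}$, and the same two-sided identification of the surviving indices with the cubes of $(\cD_0^j)^\circ$ lying inside some $P^j\in\mathcal P^j_{K^{A_j}}$. The only difference is cosmetic and comes at the end: you telescope along the chain of cubes above each partition cube, together with the full martingale expansion of $g$, to write $X_{K^{A_j}}g$, whereas the paper telescopes inside each partition cube via \eqref{eq:one-coordinate-telescope} to identify the removed sum directly as $g-X_{K^{A_j}}g$.
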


\begin{proof}
Applying $\Delta_{K^{A_j}}$ to \eqref{eq:projection} gives
\[
 \Delta_{K^{A_j}}F=\Delta_{K^{A_j}}f
 -\sum_{\substack{L\in\cD_0^\circ\\
                  L\subset I\text{ for some }I\in\mathcal G}}
       \Delta_{K^{A_j}}\Delta_Lf.
\]
For each $i\in A_j$, orthogonality gives
\[
 \Delta_{K^i}\Delta_{L^i}=0\quad(K^i\ne L^i),
 \qquad \Delta_{K^i}^2=\Delta_{K^i}.
\]
Thus only the indices with $L^{A_j}=K^{A_j}$ remain.  For these
indices, the original condition $L\subset I$ is precisely
$L^j\subset I^j$ and $K^{A_j}\subset I^{A_j}$.  Hence
\begin{equation}\label{eq:projected-difference-sum}
 \Delta_{K^{A_j}}F=\Delta_{K^{A_j}}f
 -\sum_{\substack{L^j\in(\cD_0^j)^\circ\\
       L^j\subset I^j\text{ for some }I\in\mathcal G\\
       \text{with }K^{A_j}\subset I^{A_j}}}
       \Delta_{L^j}\Delta_{K^{A_j}}f.
\end{equation}
We now identify the indices in this sum using the partition
$\mathcal P_{K^{A_j}}^j$:
\begin{equation}\label{eq:partition-indices}
\begin{aligned}
 &\{L^j\in(\cD_0^j)^\circ:
       L^j\subset I^j\text{ for some }I\in\mathcal G
       \text{ with }K^{A_j}\subset I^{A_j}\}\\
 &=\{L^j\in(\cD_0^j)^\circ:
         L^j\subset P^j\text{ for some }P^j\in\mathcal P_{K^{A_j}}^j\}.
\end{aligned}
\end{equation}
For the inclusion from left to right, $I^j\in\mathcal A_{K^{A_j}}^j$,
so $I^j\subset P^j$ for some $P^j\in\mathcal P_{K^{A_j}}^j$.
Conversely, suppose $L^j\in(\cD_0^j)^\circ$ and
$L^j\subset P^j\in\mathcal P_{K^{A_j}}^j$.  Then
\[
 \ell(P^j)\geq\ell(L^j)>2^{-k}.
\]
Thus $P^j$, which is a member of $\mathcal A_{K^{A_j}}^j$,
cannot be one of the inserted smallest cubes.  By the definition
of that collection, $P^j=I^j$ for some $I\in\mathcal G$ with
$K^{A_j}\subset I^{A_j}$.  Hence $L^j\subset P^j=I^j$,
proving the inclusion from right to left.

For a function $g$ constant on the smallest cubes in coordinate $j$,
the martingale telescope on any $P^j\in\cD_0^j$ gives
\begin{equation}\label{eq:one-coordinate-telescope}
 \sum_{\substack{L^j\subset P^j\\ \ell(L^j)>2^{-k}}}\Delta_{L^j}g
 =\sum_{\substack{Q^j\subset P^j\\ \ell(Q^j)=2^{-k}}}E_{Q^j}g-E_{P^j}g
 =\1_{P^j}g-E_{P^j}g.
\end{equation}
The second equality uses the constancy of $g$ on each smallest
cube $Q^j$.  Summing over $P^j\in\mathcal P_{K^{A_j}}^j$ gives,
on $I_0^j$,
\begin{align*}
 g-X_{K^{A_j}}g
 &=\sum_{P^j\in\mathcal P_{K^{A_j}}^j}
        \1_{P^j}(g-\langle g\rangle_{P^j})\\
 &=\sum_{P^j\in\mathcal P_{K^{A_j}}^j}
       \sum_{\substack{L^j\in(\cD_0^j)^\circ\\L^j\subset P^j}}
               \Delta_{L^j}g\\
 &=\sum_{\substack{L^j\in(\cD_0^j)^\circ\\
       L^j\subset I^j\text{ for some }I\in\mathcal G\\
       \text{with }K^{A_j}\subset I^{A_j}}}\Delta_{L^j}g.
\end{align*}
Use $g=\Delta_{K^{A_j}}f$ in
\eqref{eq:projected-difference-sum}:
\[
 \Delta_{K^{A_j}}F
 =\Delta_{K^{A_j}}f-\bigl(\Delta_{K^{A_j}}f-X_{K^{A_j}}\Delta_{K^{A_j}}f\bigr)
 =X_{K^{A_j}}\Delta_{K^{A_j}}f.
\]
This proves \eqref{eq:cross-projection}.
\end{proof}

\subsection{Which averages are preserved}

We now extend \eqref{eq:original-averages} to the additional rectangles
needed in the square-function estimate, and obtain a representation valid
for all of them.

\begin{definition}[Averaging rectangles]\label{def:averaging-rectangles}
Define the family of averaging rectangles by
\begin{equation}\label{eq:averaging-rectangles}
 \mathcal B:=\{R\in\cD_0:
 R^j\in\mathcal P_{R^{A_j}}^j
 \text{ for some }j\in\{1,\ldots,m\}\}.
\end{equation}
Thus, in at least one coordinate, $R^j$ is a partition cube for
the cross-coordinate average determined by the other coordinates of $R$.
\end{definition}

\begin{lemma}\label{lem:original-averaging-rectangles}
For every $I\in\mathcal G$,
\[
 I^j\in\mathcal P_{I^{A_j}}^j,\qquad 1\leq j\leq m.
\]
In particular, $\mathcal G\subset\mathcal B$.
\end{lemma}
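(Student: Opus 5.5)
We fix $I\in\mathcal G$ and $j\in\{1,\ldots,m\}$, and put $K^{A_j}=I^{A_j}$. Before anything else we check that this is a legitimate index: since $I\subset I_0$ and every side length of $I$ is at least $2^{-k}$, we have $I^{A_j}\in\cD_0^{A_j}$, so $\mathcal P_{I^{A_j}}^j$ is defined. The claim $I^j\in\mathcal P_{I^{A_j}}^j$ then splits into two parts. First, $I^j$ belongs to $\mathcal A_{I^{A_j}}^j$. Second, $I^j$ is maximal there.

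Membership is immediate. The rectangle $I$ itself satisfies $I\in\mathcal G$ and $I^{A_j}\subset I^{A_j}$, so $I^j$ is one of the cubes in the first part of the definition of $\mathcal A_{I^{A_j}}^j$.

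Maximality is where incomparability enters. Suppose $Q^j\in\mathcal A_{I^{A_j}}^j$ and $I^j\subsetneq Q^j$. Then $\ell(Q^j)>\ell(I^j)\geq2^{-k}$, so $Q^j$ is not one of the inserted smallest cubes. Hence $Q^j=J^j$ for some $J\in\mathcal G$ with $I^{A_j}\subset J^{A_j}$. Combining this with $I^j\subsetneq J^j$ gives $I\subset J$ and $I\ne J$. This contradicts incomparability. So no strictly larger cube exists in $\mathcal A_{I^{A_j}}^j$.

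Finally, nested dyadic cubes compare by inclusion, and being maximal means no strictly larger member exists. Therefore $I^j$ is a maximal cube of the collection, that is, $I^j\in\mathcal P_{I^{A_j}}^j$. Taking any single $j$ in Definition~\ref{def:averaging-rectangles} shows $I\in\mathcal B$, since $I\in\cD_0$. We do not expect a real obstacle. The only point needing care is ruling out that the larger cube is an inserted smallest cube, and the side-length comparison handles it.
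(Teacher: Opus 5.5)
Your proof is correct and follows the paper's argument. Membership of $I^j$ in $\mathcal A_{I^{A_j}}^j$ comes from $I$ itself, and maximality holds because a strictly larger member would be $J^j$ with $I\subsetneq J$, which incomparability forbids. You also spell out two points the paper leaves implicit: that the larger cube cannot be one of the inserted smallest cubes, and that $I\in\cD_0$.
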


\begin{proof}
Fix $I\in\mathcal G$ and $j\in\{1,\ldots,m\}$.
We have $I^j\in\mathcal A_{I^{A_j}}^j$.
A strictly larger cube in $\mathcal A_{I^{A_j}}^j$
would be $J^j$ for some $J\in\mathcal G$ with $I^i\subset J^i$
for $i\in A_j$, giving $I\subsetneq J$.  Incomparability excludes
this, so $I^j\in\mathcal P_{I^{A_j}}^j$ and $I\in\mathcal B$.
\end{proof}

We will estimate $M_{\mathcal B}f$, which dominates $M_{\mathcal G}f$.
The enlargement allows this maximal function to control the
cross-coordinate averages, while retaining the representation of
averages proved below.  These two properties will be used together
in the final estimate.

The family $\mathcal B$ need not be incomparable.  Its rectangles
cover $I_0$.  Indeed, fix $j$.  Since $\mathcal P_{I_0^{A_j}}^j$
partitions $I_0^j$, we have
\[
 I_0
 =\bigcup_{\substack{R\in\cD_0,\ R^{A_j}=I_0^{A_j}\\
                      R^j\in\mathcal P_{I_0^{A_j}}^j}}R
 \subset\bigcup_{R\in\mathcal B}R
 \subset I_0.
\]

For $K^{A_j}\in\cD_0^{A_j}$ and $x\in K^{A_j}\times I_0^j$,
let $P^j\in\mathcal P_{K^{A_j}}^j$ be the partition cube containing
$x_j$, and put $R=K^{A_j}\times P^j$.  Then $x\in R\in\mathcal B$,
and \eqref{eq:cross-average} gives
\begin{equation}\label{eq:averaged-denominator}
 X_{K^{A_j}}E_{K^{A_j}}f(x)
 =E_{P^j}E_{K^{A_j}}f(x)=E_Rf(x)\leq M_{\mathcal B}f(x).
\end{equation}

\begin{lemma}[Preserved averages and representation]\label{lem:representation}
For every $R\in\mathcal B$ and $x\in R$,
\begin{equation}\label{eq:preservation}
 E_RF(x)=E_Rf(x).
\end{equation}
Moreover,
\begin{equation}\label{eq:representation}
 E_Rf(x)=\sum_{B\subsetneq\{1,\ldots,m\}}
       (-1)^{m+1+|B|}\left(\prod_{i\in B}E_{R^i}\right)F(x).
\end{equation}
\end{lemma}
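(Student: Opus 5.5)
The plan is to prove \eqref{eq:preservation} by the same cancellation argument used for \eqref{eq:original-averages}, and then to obtain \eqref{eq:representation} from inclusion--exclusion applied to $\prod_i(\Id_i-E_{R^i})F$ on $R$. That product will vanish on $R$ by the second line of \eqref{eq:projection-properties}.

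\emph{Step 1: preservation.} Fix $R\in\mathcal B$ and choose $j$ with $R^j\in\mathcal P_{R^{A_j}}^j$. Take any index $L$ in the sum \eqref{eq:projection}, so $L\in\cD_0^\circ$ and $L\subset I$ for some $I\in\mathcal G$. Suppose $E_R\Delta_Lf\ne0$. Then \eqref{eq:average-difference} forces $R^i\subsetneq L^i\subset I^i$ for every $i$.
\begin{itemize}
\item For $i\in A_j$ this gives $R^{A_j}\subset I^{A_j}$, so $I^j\in\mathcal A_{R^{A_j}}^j$.
\item For $i=j$ it gives $R^j\subsetneq I^j$.
\end{itemize}
Together these contradict the maximality of $R^j$ in $\mathcal A_{R^{A_j}}^j$. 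Hence every subtracted term has zero average over $R$, and $E_RF=E_Rf$.

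\emph{Step 2: expansion.} Expanding the product of commuting coordinate operators gives
\[
 \prod_{i=1}^m(\Id_i-E_{R^i})F=\sum_{B\subset\{1,\ldots,m\}}(-1)^{|B|}\Bigl(\prod_{i\in B}E_{R^i}\Bigr)F.
\]
The term $B=\{1,\ldots,m\}$ is $(-1)^mE_RF=(-1)^mE_Rf$ by Step~1. So \eqref{eq:representation} at $x\in R$ follows once we know that the left side vanishes at $x$. Indeed, solving for $E_Rf$ produces exactly the signs $(-1)^{m+1+|B|}$.

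\emph{Step 3: vanishing of the product on $R$.} Since $F$ is constant on the smallest cubes, apply the one-coordinate telescope \eqref{eq:one-coordinate-telescope} in each coordinate, as in the proof of Lemma~\ref{lem:cross}. On $R$ this gives
\[
 \1_R\prod_{i}(\Id_i-E_{R^i})F=\sum_{\substack{L\in\cD_0^\circ\\ L\subset R}}\Delta_LF.
\]
It remains to show that every term on the right is zero. With $j$ as in Step~1, there are two cases.
\begin{itemize}
\item If $R^j$ is one of the inserted smallest cubes, then no $L^j\subset R^j$ has $\ell(L^j)>2^{-k}$, so the sum is empty.
\item Otherwise $R^j=I^j$ for some $I\in\mathcal G$ with $R^{A_j}\subset I^{A_j}$. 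Then $R\subset I$, so each $L\subset R$ satisfies $L\subset I$, and $\Delta_LF=0$ by \eqref{eq:projection-properties}.
\end{itemize}
In both cases the product vanishes on $R$, which completes \eqref{eq:representation}.

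The main point to watch is Step~3. One must check that the telescoping identity holds for the product operator localized to $R$, and that the two cases for the partition cube $R^j$ exhaust all possibilities. Once these are confirmed, the rest is sign bookkeeping.
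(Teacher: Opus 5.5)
Your proposal is correct and follows the paper's proof essentially step for step. The preservation argument uses the maximality of $R^j$ in $\mathcal A_{R^{A_j}}^j$. The vanishing of $\prod_i(\Id_i-E_{R^i})F$ on $R$ comes from the coordinatewise telescope and splits into two cases: an empty sum when $R^j$ is a smallest cube, and $R\subset I\in\mathcal G$ otherwise. Expanding that product then gives the signs $(-1)^{m+1+|B|}$.
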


\begin{proof}
To prove $E_RF=E_Rf$, we show that each difference removed in
\eqref{eq:projection} has zero average on $R$.
Fix a removed index $L\subset I\in\mathcal G$.  By
\eqref{eq:average-difference}, $E_R\Delta_Lf$ can be nonzero only if
\begin{equation}\label{eq:nonzero-average-condition}
 R^i\subsetneq L^i\quad(1\leq i\leq m).
\end{equation}
Choose $j$ with $R^j\in\mathcal P_{R^{A_j}}^j$.
By \eqref{eq:nonzero-average-condition} and $L\subset I$,
\[
 R^{A_j}\subset I^{A_j},\qquad R^j\subsetneq I^j.
\]
The first inclusion gives $I^j\in\mathcal A_{R^{A_j}}^j$;
the second contradicts the maximality of
$R^j\in\mathcal P_{R^{A_j}}^j$.
This contradiction proves \eqref{eq:preservation}.

To prove \eqref{eq:representation}, apply
\eqref{eq:one-coordinate-telescope} in each coordinate.  Since $F$
is constant on the smallest cubes in each coordinate, this gives
\begin{equation}\label{eq:local-product-expansion}
\begin{aligned}
 \prod_{i=1}^m(\Id_i-E_{R^i})F(x)
 &=\prod_{i=1}^m\Bigg(
       \sum_{\substack{L^i\in(\cD_0^i)^\circ\\L^i\subset R^i}}
             \Delta_{L^i}\Bigg)F(x)\\
 &=\sum_{\substack{L\in\cD_0^\circ\\L\subset R}}
          \Delta_LF(x),\qquad x\in R.
\end{aligned}
\end{equation}
If $\ell(R^j)>2^{-k}$, then $R^j=I^j$ for some
$I\in\mathcal G$ with $R^{A_j}\subset I^{A_j}$.
Every index in the sum therefore satisfies
\[
 L\subset R\subset I\in\mathcal G
 \quad\Longrightarrow\quad \Delta_LF=0
\]
by \eqref{eq:projection-properties}.  If $\ell(R^j)=2^{-k}$,
there is no index $L^j\in(\cD_0^j)^\circ$ with
$L^j\subset R^j$, so the sum is empty.
In both cases, the sum on the right of
\eqref{eq:local-product-expansion} is zero for $x\in R$.
To expand the left side, choose $-E_{R^i}$ from the factors with
$i\in B$ and $\Id_i$ from all other factors.  This contributes
$|B|$ minus signs.  Thus, for $x\in R$,
\[
\begin{aligned}
 0&=\prod_{i=1}^m(\Id_i-E_{R^i})F(x)\\
  &=\sum_{B\subset\{1,\ldots,m\}}(-1)^{|B|}
       \left(\prod_{i\in B}E_{R^i}\right)F(x)\\
  &=(-1)^mE_RF(x)
   +\sum_{B\subsetneq\{1,\ldots,m\}}(-1)^{|B|}
       \left(\prod_{i\in B}E_{R^i}\right)F(x).
\end{aligned}
\]
The empty set contributes $F(x)$; in the last line we have separated
the full set, which contributes $(-1)^mE_RF(x)$.
Moving the other terms to the opposite side and dividing by $(-1)^m$
multiplies their coefficients by $(-1)^{m+1}$.  Using
\eqref{eq:preservation}, we obtain
\[
 E_Rf(x)=E_RF(x)
 =\sum_{B\subsetneq\{1,\ldots,m\}}(-1)^{m+1+|B|}
       \left(\prod_{i\in B}E_{R^i}\right)F(x),\qquad x\in R,
\]
which is \eqref{eq:representation}.
\end{proof}

\subsection{Reduction to square functions}

We now combine the representation \eqref{eq:representation} with
Lemma~\ref{lem:square} to estimate the maximal function by square
functions of $F$.

Since $f>0$ and $\mathcal B$ covers $I_0$,
we have $M_{\mathcal B}f>0$ on $I_0$.  For $j=1,\ldots,m$, set
\begin{equation}\label{eq:projected-square-functions}
 S_{A_j}F=\left(\sum_{K^{A_j}\in(\cD_0^{A_j})^\circ}|\Delta_{K^{A_j}}F|^2\right)^{1/2},
 \qquad
 WF=\left(\sum_{j=1}^m(S_{A_j}F)^2\right)^{1/2}.
\end{equation}

\begin{lemma}[Reduction to square functions]\label{lem:maximal-to-square}
For $1<p\leq3/2$,
\begin{equation}\label{eq:maximal-to-square}
 \|M_{\mathcal B}f\|_p\leq C_{m,d}(p')^{m-2}\|f\|_p+C_{m,d}\|WF\|_p.
\end{equation}
The constant depends only on $m$ and $d$.
\end{lemma}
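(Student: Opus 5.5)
Starting from the representation \eqref{eq:representation}, for $x\in R\in\mathcal B$ we write $E_Rf(x)$ as a signed sum over proper subsets $B\subsetneq\{1,\ldots,m\}$ of $\prod_{i\in B}E_{R^i}F(x)$. Each such term is an average of $F$ in at most $m-1$ coordinates. Every proper $B$ is contained in some $A_j$, so
\[
 \Bigl|\prod_{i\in B}E_{R^i}F(x)\Bigr|\leq\mathcal M_{\cD^B}F(x),
\]
where $\mathcal M_{\cD^B}$ is the supremum of absolute averages in the coordinates of $B$ only. Taking the supremum over $R\ni x$ gives
\[
 M_{\mathcal B}f\leq\sum_{B\subsetneq\{1,\ldots,m\}}\mathcal M_{\cD^B}F .
\]
The plan is to separate, in each $\mathcal M_{\cD^B}F$, the martingale-difference part of $F$ in the $B$ coordinates from its averaged part. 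Expanding $F$ on $I_0$ in the coordinates of $B$ gives
\[
 F=\sum_{C\subset B}\Bigl(\prod_{i\in C}\ \sum_{K^i\in(\cD_0^i)^\circ}\Delta_{K^i}\Bigr)\Bigl(\prod_{i\in B\setminus C}E_{I_0^i}\Bigr)F .
\]
Each summand is a finite sum of product differences in the coordinates $C$, averaged over $I_0$ in the coordinates $B\setminus C$. Its $\mathcal M_{\cD^B}$ is controlled by $\mathcal M_{\cD^C}$ of the $C$-differences of $E_{I_0^{B\setminus C}}F$.

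The terms with $C=\varnothing$ are averages over $I_0^B$. For these the first line of \eqref{eq:projection-properties} gives $E_{I_0^B}F=E_{I_0^B}f$. This is a positive average, bounded by $M_{\mathcal B}f$ near the covering rectangles, or simply by the product maximal function of $f$ in the remaining coordinates. That costs at most $(p')^{m-1}$, which is too much if $B=A_j$ with $C=\varnothing$. However, $E_{I_0^{A_j}}f$ is a function of $x_j$ alone, and it is dominated pointwise using \eqref{eq:averaged-denominator} with $K^{A_j}=I_0^{A_j}$: $X_{I_0^{A_j}}E_{I_0^{A_j}}f\leq M_{\mathcal B}f$. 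I expect the pure-average terms to be handled by combinations of this kind and \eqref{eq:ordinary-maximal} in at most $m-2$ coordinates, producing the $(p')^{m-2}\|f\|_p$ term. Where a term still requires $M_{\mathcal B}f$ itself, I would absorb it after multiplying by a small constant; this is legitimate since $M_{\mathcal B}f$ has finite norm in the finite setting.

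For $C\neq\varnothing$, apply Lemma~\ref{lem:square} in the $C$ coordinates, freezing the others (including the averaged ones) and integrating in them. This bounds
\[
 \bigl\|\mathcal M_{\cD^C}\Delta\text{-part}\bigr\|_p\leq C_d\,\|S_{C}(E_{I_0^{B\setminus C}}F)\|_p .
\]
The averaging $E_{I_0^{B\setminus C}}$ commutes with $\Delta_{K^C}$ and is contractive on square functions pointwise in $L^p$ by Jensen/Minkowski. Since $p\geq1$, this gives $\|S_CF\|_p$. Since $C\subset A_j$ for some $j$, I then need to pass from $S_C$ to $S_{A_j}$. I would use the orthogonal decomposition $\Delta_{K^C}F=\sum_{K^{A_j\setminus C}}\Delta_{K^{A_j}}F+\Delta_{K^C}E_{I_0^{A_j\setminus C}}F$. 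The first piece is bounded by $S_{A_j}F$ via a Littlewood--Paley inequality in the extra coordinates, but that costs powers of $p'$ as $p\to1$.

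This last step is the main obstacle. To avoid the loss, I would instead organize the whole expansion by the full set $A_j$ from the start. That is, write $\prod_{i\in B}E_{R^i}F$ with $B\subset A_j$, and expand $F$ in the $A_j$ coordinates as $\sum_{C\subset A_j}\Delta_{K^C}E_{I_0^{A_j\setminus C}}F$. The averages $E_{R^i}$ for $i\in A_j\setminus B$ are trivial on $\Delta_{K^i}$ only when $R^i\subsetneq K^i$. So $\prod_{i\in B}E_{R^i}\Delta_{K^{A_j}}F$ summed over $K$ is itself a restricted average of a product martingale in $A_j$. Its supremum is then controlled by $\mathcal M_{\cD^{A_j}}$ of partial sums, which Lemma~\ref{lem:square} handles directly with $S_{A_j}F$. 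The lower-order terms with some coordinate averaged over $I_0^i$ reduce by induction to fewer coordinates, ending in the pure averages treated above. Summing over $j$ gives $C_{m,d}\|WF\|_p$.
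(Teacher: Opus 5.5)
Your reorganized plan follows the paper's route. For each proper $B$ you choose $j\notin B$, control the $B$-averages of $F$ by $\mathcal M_{A_j}F$, and split $F$ on $I_0$ into two pieces. The first is $g_j=\sum_{K^{A_j}\in(\cD_0^{A_j})^\circ}\Delta_{K^{A_j}}F=\prod_{i\in A_j}(\Id_i-E_{I_0^i})F$, to which you apply the $(m-1)$-parameter Lemma~\ref{lem:square}. The rest are terms carrying at least one top average $E_{I_0^i}$ with $i\in A_j$.

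The precise justification of your ``restricted average'' step is that $F$ and $g_j$ are constant on the smallest cubes. So an average in the coordinates $B\subset A_j$ equals an average over a rectangle of $\cD_0^{A_j}$: for $i\in A_j\setminus B$, use the smallest cube containing $x_i$. This also disposes of $B=\varnothing$, i.e.\ of $F(x)$ itself. In your first organization that term is $|F|$; the identity $E_{I_0^B}F=E_{I_0^B}f$ fails for it, and no bound for $\|F\|_p$ is available.

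What is wrong as written is your treatment of the pure averages. You claim $E_{I_0^{A_j}}f$ costs $(p')^{m-1}$ and propose to dominate it by $M_{\mathcal B}f$ via \eqref{eq:averaged-denominator}. But that inequality bounds $X_{I_0^{A_j}}E_{I_0^{A_j}}f$, the average of $E_{I_0^{A_j}}f$ over the cubes of $\mathcal P^j_{I_0^{A_j}}$. It does not bound $E_{I_0^{A_j}}f$ itself, which may vary in $x_j$ inside those cubes. The fallback of absorbing $\|M_{\mathcal B}f\|_p$ also fails: such terms would enter linearly with coefficient at least one, and nothing supplies a small constant.

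None of this is needed. The supremum in $\mathcal M_{A_j}$ never involves coordinate $j$. Moreover, $E_{I_0^{B'}}f$ with $\varnothing\neq B'\subset A_j$ is unchanged by averaging over cubes $L^{B'}\subset I_0^{B'}$. Hence
\[
 \mathcal M_{A_j}E_{I_0^{B'}}f\leq M_{\cD^{A_j\setminus B'}}E_{I_0^{B'}}f,
 \qquad
 \|\mathcal M_{A_j}E_{I_0^{B'}}f\|_p\leq(p')^{|A_j\setminus B'|}\|f\|_p\leq(p')^{m-2}\|f\|_p .
\]
In particular $E_{I_0^{A_j}}f$ costs only $\|f\|_p$.

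Your ``induction to fewer coordinates'' for the mixed terms is just the telescoping identity
\[
 \sum_{K^C\in(\cD_0^C)^\circ}\Delta_{K^C}E_{I_0^{A_j\setminus C}}F
 =\prod_{i\in C}(\Id_i-E_{I_0^i})\,E_{I_0^{A_j\setminus C}}f
 \quad\text{on }I_0,\qquad C\subsetneq A_j .
\]
It uses $E_{I_0^{A_j\setminus C}}F=E_{I_0^{A_j\setminus C}}f$ from \eqref{eq:projection-properties}, and once expanded it produces only such pure averages. With this replacement your argument becomes the paper's proof.
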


\begin{proof}
We estimate $M_{\mathcal B}f$ using \eqref{eq:representation}.
For a nonempty $A\subset\{1,\ldots,m\}$ and a function $g$ on $I_0$,
define the signed maximal function in those coordinates by
\[
 \mathcal M_Ag(x)=
 \sup_{\substack{L^A\in\cD_0^A\\x^A\in L^A}}
       |E_{L^A}g(x)|,\qquad x\in I_0.
\]
The supremum includes the smallest cubes, so for $F$ it also bounds
averages in fewer coordinates.  Indeed, let $B\subset A$, $L^B\in\cD_0^B$,
and $x^B\in L^B$.  For $i\in A\setminus B$, choose
the smallest cube $Q^i$ containing $x_i$.  Constancy of $F$ on the
smallest product cubes gives
\[
 |E_{L^B}F(x)|
 =\left|E_{L^B}\prod_{i\in A\setminus B}E_{Q^i}F(x)\right|
 \leq\mathcal M_AF(x).
\]

For each proper
coordinate subset $B$, choose $j\notin B$.  Then $B\subset A_j$,
so the preceding observation gives
\[
 \left|\left(\prod_{i\in B}E_{R^i}\right)F(x)\right|
 \leq\mathcal M_{A_j}F(x),\qquad x\in R.
\]
Since $f>0$ on $I_0$, \eqref{eq:preservation} gives
$E_R|f|(x)=E_Rf(x)=E_RF(x)\geq0$ for $R\in\mathcal B$ and $x\in R$.
In particular, using \eqref{eq:representation},
\begin{equation}\label{eq:representation-maximal}
\begin{aligned}
 M_{\mathcal B}f(x)&=\sup_{\substack{R\in\mathcal B\\x\in R}}|E_RF(x)|\\
 &\leq\sup_{\substack{R\in\mathcal B\\x\in R}}
       \sum_{B\subsetneq\{1,\ldots,m\}}
       \left|\left(\prod_{i\in B}E_{R^i}\right)F(x)\right|\\
 &\leq2^m\sum_{j=1}^m\mathcal M_{A_j}F(x),
       \qquad x\in I_0.
\end{aligned}
\end{equation}
Fix $j$.  To estimate $\mathcal M_{A_j}F$, define
\[
 g_j:=\sum_{K^{A_j}\in(\cD_0^{A_j})^\circ}\Delta_{K^{A_j}}F
\]
and write $F=g_j+(F-g_j)$.
Applying the telescoping identity \eqref{eq:one-coordinate-telescope} in each coordinate
$i\in A_j$ and expanding the product gives, for $x\in I_0$,
\[
\begin{aligned}
 g_j(x)&=\prod_{i\in A_j}(\Id_i-E_{I_0^i})F(x)\\
 &=F(x)+\sum_{\varnothing\ne B\subset A_j}(-1)^{|B|}
            \left(\prod_{i\in B}E_{I_0^i}\right)F(x).
\end{aligned}
\]
For each nonempty $B$, choose $i_0\in B$.
The identities in \eqref{eq:projection-properties} imply
\[
 \left(\prod_{i\in B}E_{I_0^i}\right)F
 =\left(\prod_{i\in B\setminus\{i_0\}}E_{I_0^i}\right)E_{I_0^{i_0}}f
 =\left(\prod_{i\in B}E_{I_0^i}\right)f.
\]
Substituting these identities gives
\begin{equation}\label{eq:top-average-decomposition}
 g_j(x)=F(x)
   +\sum_{\varnothing\ne B\subset A_j}(-1)^{|B|}
           \left(\prod_{i\in B}E_{I_0^i}\right)f(x),
 \qquad x\in I_0.
\end{equation}

Subadditivity of $\mathcal M_{A_j}$ and the triangle inequality give
\[
 \|\mathcal M_{A_j}F\|_p
 \leq\|\mathcal M_{A_j}g_j\|_p
   +\sum_{\varnothing\ne B\subset A_j}
       \left\|\mathcal M_{A_j}
          \left(\prod_{i\in B}E_{I_0^i}f\right)\right\|_p.
\]
We first estimate $\mathcal M_{A_j}g_j$.  By orthogonality,
\[
 S_{\cD^{A_j}}g_j
 =\left(\sum_{K^{A_j}\in(\cD_0^{A_j})^\circ}|\Delta_{K^{A_j}}F|^2\right)^{1/2}
 =S_{A_j}F.
\]
Applying the $(m-1)$-parameter case of Lemma~\ref{lem:square}
on $\cD^{A_j}$, fiberwise with $x_j$ fixed, gives
\[
 \|\mathcal M_{A_j}g_j\|_p
 \leq C_d\|S_{A_j}F\|_p.
\]

For the remaining sum, fix a nonempty $B\subset A_j$.  The function
$E_{I_0^B}f=\prod_{i\in B}E_{I_0^i}f$ has already been averaged
over the top rectangle $I_0^B$.  Every rectangle $L^B$ allowed in
the supremum defining $\mathcal M_{A_j}$ lies inside $I_0^B$.
Thus, for $x\in I_0$,
\[
 E_{L^B}E_{I_0^B}f(x)=E_{I_0^B}f(x),
 \qquad x^B\in L^B\subset I_0^B.
\]
The choices of cubes in these coordinates therefore do not affect the
supremum, and
\[
 \mathcal M_{A_j}E_{I_0^B}f(x)
 \leq M_{\cD^{A_j\setminus B}}E_{I_0^B}f(x),\qquad x\in I_0.
\]
When $B=A_j$, omit the maximal operator
$M_{\cD^{A_j\setminus B}}$ on the right.
Apply \eqref{eq:ordinary-maximal} in the remaining
$|A_j\setminus B|\leq m-2$ coordinates, and use that
$E_{I_0^B}$ is an $L^p$ contraction:
\[
 \|\mathcal M_{A_j}E_{I_0^B}f\|_p
 \leq(p')^{|A_j\setminus B|}\|E_{I_0^B}f\|_p
 \leq(p')^{m-2}\|f\|_p.
\]
Combining these bounds gives
\[
 \|\mathcal M_{A_j}F\|_p
 \leq C_d\|S_{A_j}F\|_p
       +(2^{m-1}-1)(p')^{m-2}\|f\|_p.
\]
Finally, \eqref{eq:representation-maximal} reduces the desired bound
to the sum of these estimates over $j$.  Since $S_{A_j}F\leq WF$
for every $j$, we obtain
\[
\begin{aligned}
 \|M_{\mathcal B}f\|_p
 &\leq2^m\sum_{j=1}^m\|\mathcal M_{A_j}F\|_p\\
 &\leq C_{m,d}\|WF\|_p+C_{m,d}(p')^{m-2}\|f\|_p.
\end{aligned}
\]
This is \eqref{eq:maximal-to-square}.
\end{proof}

\section{Square-function estimates for the representation}\label{sec:square-estimates}

We remain in the finite setting of Section~\ref{sec:geometry}, with
the same grids $\cD^i$, cubes $I_0^i$, smallest side length $2^{-k}$,
and finite collections $\cD_0^i$ and $(\cD_0^i)^\circ$.

We first deduce the finite maximal estimate from the weighted
square-function estimate in Lemma~\ref{lem:projected-square},
which will be proved at the end of this section.

\begin{lemma}[Finite incomparable maximal estimate]\label{lem:represented-bound}
Let $m\geq2$ and let $\mathcal G\subset\cD$ be finite and incomparable.
Suppose every $I\in\mathcal G$ satisfies $I\subset I_0$ and
$\ell(I^i)\geq2^{-k}$ for all $i$, where $I_0\in\cD$ and $k\in\Z$.
Let $f$ be supported on $I_0$, strictly positive there, and constant
on every cube $J\in\cD$ with $J\subset I_0$ and $\ell(J)=2^{-k}$.
Then, for $1<p\leq3/2$,
\begin{equation}\label{eq:represented-bound}
 \|M_{\mathcal G}f\|_p\leq C_{m,d}(p')^{m-1}\|f\|_p,
\end{equation}
where $C_{m,d}$ depends only on $m$ and $d$.
\end{lemma}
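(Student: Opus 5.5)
The plan is to deduce the estimate from Lemma~\ref{lem:maximal-to-square} together with the weighted square-function estimate of Lemma~\ref{lem:projected-square}, followed by absorption. By Lemma~\ref{lem:original-averaging-rectangles} we have $\mathcal G\subset\mathcal B$, so $M_{\mathcal G}f\leq M_{\mathcal B}f$. It therefore suffices to bound $\|M_{\mathcal B}f\|_p$. Lemma~\ref{lem:maximal-to-square} reduces this to showing
\[
 \|WF\|_p\leq C_{m,d}(p')^{m-1}\|f\|_p+\tfrac12C^{-1}\|M_{\mathcal B}f\|_p,
\]
where $C$ is the constant in \eqref{eq:maximal-to-square}. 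Since $\mathcal B\subset\cD_0$ is finite and $f$ is bounded on $I_0$, we know $\|M_{\mathcal B}f\|_p<\infty$. Hence such a term can be absorbed into the left side, giving $\|M_{\mathcal B}f\|_p\lesssim(p')^{m-1}\|f\|_p$.

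To estimate $S_{A_j}F$, use Lemma~\ref{lem:cross}: $\Delta_{K^{A_j}}F=X_{K^{A_j}}\Delta_{K^{A_j}}f$. I expect Lemma~\ref{lem:projected-square} to provide a weighted bound of the shape
\[
 \int (S_{A_j}F)^2w^{p-2}\lesssim (p-1)^{-2(m-1)}\int |f|^2 w^{p-2},
\]
or a close variant, with the weight $w=M_{\mathcal B}f$ (possibly plus a multiple of a strong maximal function of $f$). The choice of $M_{\mathcal B}f$ is natural because \eqref{eq:averaged-denominator} shows that $w$ dominates every cross-coordinate average $X_{K^{A_j}}E_{K^{A_j}}f$. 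This domination is exactly what allows the $K^{A_j}$-dependent averaging operator to be handled pointwise inside the square function: the averaged differences at a point are controlled by averages that are themselves bounded by $w$.

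Granting that weighted estimate, I would pass to $L^p$ by H\"older's inequality in the standard way:
\[
 \|WF\|_p^p=\int (WF)^p w^{-p(2-p)/2}w^{p(2-p)/2}
 \leq\Bigl(\int (WF)^2w^{p-2}\Bigr)^{p/2}\Bigl(\int w^p\Bigr)^{(2-p)/2}.
\]
Substituting the weighted bound, together with $\int|f|^2w^{p-2}\leq\int|f|^p$ (valid since $f\leq w$ a.e.\ on $I_0$, because $\mathcal B$ contains smallest cubes on which $f$ is constant, or since $w\geq f$ by differentiation), gives
\[
 \|WF\|_p\lesssim (p')^{m-1}\|f\|_p^{p/2}\|M_{\mathcal B}f\|_p^{1-p/2}.
\]
Young's inequality with a small parameter then yields $\varepsilon\|M_{\mathcal B}f\|_p+C_\varepsilon(p')^{c}\|f\|_p$.

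The main obstacle is the power of $p'$ in this step. Young's inequality with exponents $2/p$ and $2/(2-p)$ raises $(p')^{m-1}$ to the power $2/p$, which is harmless because $(p')^{2/p}\leq e\,p'$ for $p$ near one. So the bookkeeping closes with $(p')^{m-1}$ only if the weighted estimate carries $(p-1)^{-(m-1)}$ after taking square roots. If Lemma~\ref{lem:projected-square} instead involves $M_{\cD}f$ or the maximal function in $A_j$ as the weight, I would first try bounding that weight in $L^p$ by \eqref{eq:ordinary-maximal} in only $m-1$ coordinates. Combining this with Lemma~\ref{lem:maximal-to-square}, whose $(p')^{m-2}$ term is already acceptable, would finish \eqref{eq:represented-bound}.
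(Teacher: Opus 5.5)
Your architecture is the paper's: $\mathcal G\subset\mathcal B$ by Lemma~\ref{lem:original-averaging-rectangles}, the reduction of Lemma~\ref{lem:maximal-to-square}, a weighted bound for $WF$ with weight $M_{\mathcal B}f$, H\"older with exponents $2/p$ and $2/(2-p)$, and absorption. Using Young with a small parameter instead of the paper's two-case argument is cosmetic, and you correctly note that $\|M_{\mathcal B}f\|_p<\infty$ is needed to absorb.

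The gap is in the power counting, which is the whole content of this lemma. The inequality $(p')^{2/p}\leq e\,p'$ is false. We have $(p')^{2/p}=p'\cdot(p')^{(2-p)/p}$ with $(2-p)/p\to1$, so it behaves like $(p')^2$ as $p\to1$; at $p=1.01$ it exceeds $9000$ while $e\,p'<275$. So with the estimate you anticipate, carrying $(p-1)^{-2(m-1)}$ so that $\|WF\|_p\lesssim(p')^{m-1}\|f\|_p^{p/2}\|M_{\mathcal B}f\|_p^{1-p/2}$, absorption only gives roughly $(p')^{2(m-1)}$. The argument closes because Lemma~\ref{lem:projected-square} carries $(p-1)^{-(m-1)}$ \emph{before} the square root:
\[
 \int_{I_0}\frac{(WF)^2}{(M_{\mathcal B}f)^{2-p}}\leq C_{m,d}(p-1)^{-(m-1)}\|f\|_p^p .
\]
H\"older then gives the factor $(p-1)^{-(m-1)/2}$. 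The power $2/p$ from absorption turns this into $(p-1)^{-(m-1)/p}\leq(p-1)^{-(m-1)}\leq(p')^{m-1}$. The exponent $2/p$ is harmless only because it acts on a square root.

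Your step $\int f^2w^{p-2}\leq\int f^p$ also fails, because $f\leq M_{\mathcal B}f$ is false in general. The family $\mathcal B$ need not contain the smallest cubes or any small rectangles, so no differentiation is available. For $\mathcal G=\{I_0\}$, every partition $\mathcal P^j_{K^{A_j}}$ is $\{I_0^j\}$, so $\mathcal B$ consists of the rectangles having some full side $I_0^j$. Then a tall spike of $f$ on one smallest cube $Q$ makes $f/M_{\mathcal B}f$ on $Q$ comparable to $\min_j|I_0^j|/|Q^j|$. That ratio is unbounded as $k$ grows, and so is $\int f^2w^{p-2}/\int f^p$.

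The actual lemma never needs this bound, since its right side is $\|f\|_p^p$ directly. The Cauchy--Schwarz/Jensen step replaces $M_{\mathcal B}f$ by $X_{K^{A_j}}E_{K^{A_j}}f$ via \eqref{eq:averaged-denominator}. The averages $X_{K^{A_j}}$ then integrate out, and Lemma~\ref{lem:product-square} ends with $\int f^2/f^{2-p}=\|f\|_p^p$. With Lemma~\ref{lem:projected-square} in its stated form, your outline becomes the paper's proof; with the form you guessed, it does not give $(p')^{m-1}$.
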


\begin{proof}
The reduction \eqref{eq:maximal-to-square} leads to estimating $\|WF\|_p$;
the other term already has the acceptable power
$(p')^{m-2}\leq(p')^{m-1}$.
We will prove in Lemma~\ref{lem:projected-square} that
\[
 \int_{I_0}\frac{(WF)^2}{(M_{\mathcal B}f)^{2-p}}
 \leq C_{m,d}(p-1)^{-(m-1)}\|f\|_p^p.
\]
The denominator involves $M_{\mathcal B}f$ because it controls
the cross-coordinate averages in \eqref{eq:averaged-denominator}.
To use this weighted estimate, write
\[
 (WF)^p=\left(\frac{(WF)^2}{(M_{\mathcal B}f)^{2-p}}\right)^{p/2}
         (M_{\mathcal B}f)^{p(2-p)/2}.
\]
H\"older's inequality with exponents $2/p$ and $2/(2-p)$ gives
\begin{align}
 \|WF\|_p
 &=\left(\int_{I_0}
        \left(\frac{(WF)^2}{(M_{\mathcal B}f)^{2-p}}\right)^{p/2}
             (M_{\mathcal B}f)^{p(2-p)/2}\right)^{1/p}\notag\\
 &\leq\left(\int_{I_0}\frac{(WF)^2}{(M_{\mathcal B}f)^{2-p}}\right)^{1/2}
       \left(\int_{I_0}(M_{\mathcal B}f)^p\right)^{(2-p)/(2p)}\notag\\
 &=\left(\int_{I_0}\frac{(WF)^2}{(M_{\mathcal B}f)^{2-p}}\right)^{1/2}
       \|M_{\mathcal B}f\|_p^{1-p/2}\notag\\
 &\leq C_{m,d}(p-1)^{-(m-1)/2}
         \|f\|_p^{p/2}\|M_{\mathcal B}f\|_p^{1-p/2}.
         \label{eq:holder-absorption}
\end{align}
This estimate brings in $\|M_{\mathcal B}f\|_p$ even if our initial
target is $M_{\mathcal G}f$.  We therefore use
\eqref{eq:maximal-to-square} with $M_{\mathcal B}f$ on the left:
the same norm then appears on both sides, allowing absorption.
Combining \eqref{eq:maximal-to-square} and \eqref{eq:holder-absorption}
gives, for a constant $C_{m,d}\geq1$ depending only on $m$ and $d$,
\[
\begin{aligned}
 \|M_{\mathcal B}f\|_p
 &\leq C_{m,d}(p')^{m-2}\|f\|_p\\
 &\quad+C_{m,d}(p-1)^{-(m-1)/2}
       \|f\|_p^{p/2}\|M_{\mathcal B}f\|_p^{1-p/2}.
\end{aligned}
\]
We handle the additive term before dividing.  If
$C_{m,d}(p')^{m-2}\|f\|_p\geq\tfrac12\|M_{\mathcal B}f\|_p$, then
\[
 \|M_{\mathcal B}f\|_p\leq2C_{m,d}(p')^{m-2}\|f\|_p,
\]
which is already stronger than the desired estimate.
Otherwise, the combined inequality gives
\[
 \|M_{\mathcal B}f\|_p
 \leq\tfrac12\|M_{\mathcal B}f\|_p
      +C_{m,d}(p-1)^{-(m-1)/2}
       \|f\|_p^{p/2}\|M_{\mathcal B}f\|_p^{1-p/2}.
\]
Moving $\tfrac12\|M_{\mathcal B}f\|_p$ to the left gives
\[
 \tfrac12\|M_{\mathcal B}f\|_p
 \leq C_{m,d}(p-1)^{-(m-1)/2}
       \|f\|_p^{p/2}\|M_{\mathcal B}f\|_p^{1-p/2}.
\]
Since $\|M_{\mathcal B}f\|_p>0$, division by $\|M_{\mathcal B}f\|_p^{1-p/2}$ and then
raising to the power $2/p$ gives
\[
 \|M_{\mathcal B}f\|_p
 \leq C_{m,d}^{2/p}(p-1)^{-(m-1)/p}\|f\|_p
 \leq C_{m,d}(p-1)^{-(m-1)/p}\|f\|_p,
\]
where we used $2/p\leq2$ in the last inequality.  Finally,
$1<p\leq3/2$ gives $0<p-1<1$.  Since $(m-1)/p\leq m-1$,
\[
 (p-1)^{m-1}\leq(p-1)^{(m-1)/p}.
\]
Taking reciprocals and using $(p-1)^{-1}\leq p'$ gives
\[
 (p-1)^{-(m-1)/p}\leq(p-1)^{-(m-1)}\leq(p')^{m-1}.
\]
Also $(p')^{m-2}\leq(p')^{m-1}$.
Both cases therefore yield, after enlarging the constant,
\[
 \|M_{\mathcal G}f\|_p\leq\|M_{\mathcal B}f\|_p
 \leq C_{m,d}(p')^{m-1}\|f\|_p,
\]
as claimed.
\end{proof}

It remains to prove the weighted estimate in
Lemma~\ref{lem:projected-square}.  We first establish a one-coordinate
estimate, iterate it to obtain a product estimate for $f$, and then
transfer that estimate to $F$ using the cross-coordinate averaging
identity \eqref{eq:cross-projection}.

\subsection{A one-coordinate estimate for two functions}

We first prove a scalar inequality for
$\Psi_p(a,b)=a^2/b^{2-p}$, defined for $a\in\R$ and $b>0$.

\begin{lemma}[A scalar estimate]\label{lem:scalar-estimate}
Let $1<p\leq2$, $a,h,\eta\in\R$, and $b>0$ with $b+\eta>0$.
Then
\begin{equation}\label{eq:scalar-estimate}
\begin{aligned}
 &\Psi_p(a+h,b+\eta)-\Psi_p(a,b)
       -\frac{2ah}{b^{2-p}}+\frac{(2-p)a^2\eta}{b^{3-p}}\\
 &\qquad\geq\frac{p-1}{3-p}
       \frac{h^2}{\max\{b,b+\eta\}^{2-p}}.
\end{aligned}
\end{equation}
\end{lemma}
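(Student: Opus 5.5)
The left side of \eqref{eq:scalar-estimate} is the Bregman remainder of $\Psi_p$ at $(a,b)$ in the direction $(h,\eta)$. Indeed,
\[
 \partial_a\Psi_p(a,b)=\frac{2a}{b^{2-p}},\qquad
 \partial_b\Psi_p(a,b)=-\frac{(2-p)a^2}{b^{3-p}},
\]
so the subtracted terms are exactly $\nabla\Psi_p(a,b)\cdot(h,\eta)$. The plan is therefore to write this remainder by Taylor's formula with integral remainder along the segment, and to bound the Hessian quadratic form from below pointwise.

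Put $a_t=a+th$ and $b_t=b+t\eta$ for $0\leq t\leq1$. Since $b>0$ and $b+\eta>0$, we have $b_t>0$ on the whole segment, so $\Psi_p$ is smooth there. Taylor's formula gives
\[
 \text{LHS}=\int_0^1(1-t)\,Q_t\ud t,
\]
where $Q_t$ is the second derivative of $\Psi_p$ at $(a_t,b_t)$ in the direction $(h,\eta)$. The second derivatives are
\[
 \partial_a^2\Psi_p=2b^{p-2},\qquad
 \partial_a\partial_b\Psi_p=-2(2-p)ab^{p-3},\qquad
 \partial_b^2\Psi_p=(2-p)(3-p)a^2b^{p-4}.
\]
With $u=a_t\eta/b_t$, this gives
\[
 Q_t=b_t^{p-2}\bigl[2h^2-4(2-p)hu+(2-p)(3-p)u^2\bigr].
\]

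The key step is to minimize this quadratic in $u$. For $p<2$ its leading coefficient $(2-p)(3-p)$ is positive, so the minimum is attained and equals
\[
 h^2\Bigl(2-\frac{4(2-p)}{3-p}\Bigr)=\frac{2(p-1)}{3-p}\,h^2.
\]
For $p=2$ the bracket is identically $2h^2$, which equals this value. Hence $Q_t\geq\frac{2(p-1)}{3-p}h^2b_t^{p-2}$ for every $t$.

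Since $b_t$ lies between $b$ and $b+\eta$ and $p-2\leq0$, we have $b_t^{p-2}\geq\max\{b,b+\eta\}^{p-2}$. Finally, $\int_0^1(1-t)\ud t=\tfrac12$, and this yields exactly the constant $\frac{p-1}{3-p}$ in \eqref{eq:scalar-estimate}.

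The only real obstacle is the completion-of-squares step: one must check that the discriminant computation leaves the positive factor $p-1$. This follows from $2(3-p)-4(2-p)=2(p-1)$. The remaining steps are routine.
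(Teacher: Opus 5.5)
Your proposal is correct and is essentially the paper's proof: the paper also differentiates $q(s)=\Psi_p(a+sh,b+s\eta)$ twice, bounds the same quadratic form in $a_s\eta$ and $b_sh$ from below by completing the square (your minimization over $u$ is the same computation), and then uses $b_s\leq\max\{b,b+\eta\}$ together with $\int_0^1\int_0^s dt\,ds=\int_0^1(1-t)\,dt=\tfrac12$. Your separate treatment of $p=2$ is harmless; the paper's completed-square identity, written with $r=2-p$, already covers $r=0$ directly.
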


\begin{proof}
Put $r=2-p\in[0,1)$ and set
\[
 a_s=a+sh,\qquad b_s=b+s\eta,\qquad 0\leq s\leq1.
\]
We regard $a_s$ and $b_s$ as functions of $s$; primes denote
differentiation with respect to $s$.
Since $b>0$ and $b+\eta>0$,
\[
 0<b_s=(1-s)b+s(b+\eta)\leq\max\{b,b+\eta\}.
\]
Write $q(s)=\Psi_p(a_s,b_s)=a_s^2b_s^{-r}$.
Since $a_s'=h$ and $b_s'=\eta$,
\[
 \frac{\ud}{\ud s}b_s^{-r}=-r\eta b_s^{-r-1},
 \qquad
 \frac{\ud}{\ud s}b_s^{-r-1}=-(r+1)\eta b_s^{-r-2}.
\]
The two derivatives of $q$ are therefore
\begin{align*}
 q'(s)&=2a_shb_s^{-r}-ra_s^2\eta b_s^{-r-1},\\
 q''(s)
 &=2h^2b_s^{-r}-2ra_sh\eta b_s^{-r-1}\\
 &\qquad-r\bigl(2a_sh\eta b_s^{-r-1}
                    -(r+1)a_s^2\eta^2b_s^{-r-2}\bigr)\\
 &=2b_s^{-r}h^2-4ra_sb_s^{-r-1}h\eta
       +r(r+1)a_s^2b_s^{-r-2}\eta^2\\
 &=b_s^{-r-2}\Bigl[
       r(r+1)(a_s\eta)^2-4r(a_s\eta)(b_sh)+2(b_sh)^2\Bigr].
\end{align*}
The bracket is a quadratic expression in $a_s\eta$ and $b_sh$.
Completing its square gives
\[
\begin{aligned}
 &r(r+1)(a_s\eta)^2-4r(a_s\eta)(b_sh)+2(b_sh)^2\\
 &\qquad=r(r+1)\Bigl(a_s\eta-\frac{2b_sh}{r+1}\Bigr)^2
       +\Bigl(2-\frac{4r}{r+1}\Bigr)(b_sh)^2.
\end{aligned}
\]
The square contributes the first two terms and
$\frac{4r}{r+1}(b_sh)^2$; the remaining term restores the coefficient
of $(b_sh)^2$ to $2$.  Since
$2-\frac{4r}{r+1}=\frac{2(1-r)}{1+r}$, multiplying by $b_s^{-r-2}$ gives
\begin{equation}\label{eq:convexity-calculation}
\begin{aligned}
 q''(s)
 &=r(r+1)b_s^{-r-2}
       \left(a_s\eta-\frac{2b_sh}{r+1}\right)^2\\
 &\qquad+\frac{2(1-r)}{1+r}b_s^{-r}h^2\\
 &\geq\frac{2(1-r)}{1+r}b_s^{-r}h^2
 =\frac{2(p-1)}{3-p}\,\frac{h^2}{b_s^{2-p}}.
\end{aligned}
\end{equation}
Here we simply used that the squared term is nonnegative.

Apply the fundamental theorem of calculus first to $q$ and then
to $q'$.  Since $b_t\leq\max\{b,b+\eta\}$ and
$\int_0^1\int_0^s\ud t\ud s=\int_0^1s\ud s=1/2$, we obtain
\begin{align*}
 q(1)-q(0)-q'(0)
 &=\int_0^1\bigl(q'(s)-q'(0)\bigr)\ud s\\
 &=\int_0^1\int_0^s q''(t)\ud t\ud s\\
 &\geq\frac{2(p-1)}{3-p}
       \frac{h^2}{\max\{b,b+\eta\}^{2-p}}
       \int_0^1\int_0^s\ud t\ud s\\
 &=\frac{p-1}{3-p}\frac{h^2}{\max\{b,b+\eta\}^{2-p}}.
\end{align*}
Finally,
\[
\begin{gathered}
 q(1)=\Psi_p(a+h,b+\eta),\qquad q(0)=\Psi_p(a,b),\\
 q'(0)=\frac{2ah}{b^{2-p}}-\frac{(2-p)a^2\eta}{b^{3-p}},
\end{gathered}
\]
so the left side is exactly that of \eqref{eq:scalar-estimate}.
\end{proof}

At $p=2$, the following lemma is the usual $L^2$ estimate for
dyadic martingale differences, with constant $1$.  For $1<p<2$,
it gives a weighted version of that estimate, with a loss of order
$(p-1)^{-1}$ as $p$ approaches $1$.

\begin{lemma}[A weighted square-function estimate]\label{lem:weighted-square}
Fix $i$.  Let $u,v$ be functions on $I_0^i$, with $v>0$, both
constant on every $Q\in\cD_0^i$ with $\ell(Q)=2^{-k}$.  Then
\begin{equation}\label{eq:weighted-square}
 \sum_{Q\in(\cD_0^i)^\circ}\int_Q
 \frac{|\Delta_Qu|^2}{\langle v\rangle_Q^{2-p}}
 \leq2^{d_i(2-p)}\frac{3-p}{p-1}\int_{I_0^i}\frac{|u|^2}{v^{2-p}},
 \qquad1<p\leq2.
\end{equation}
\end{lemma}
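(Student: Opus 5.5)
The plan is a Bellman-function telescoping argument built on Lemma~\ref{lem:scalar-estimate}. For $Q\in\cD_0^i$ write $a_Q=\langle u\rangle_Q$ and $b_Q=\langle v\rangle_Q>0$. For each child $J\in\ch(Q)$ set $h_J=a_J-a_Q$ and $\eta_J=b_J-b_Q$, so that $\Delta_Qu=h_J$ on $J$. We have
\[
\sum_{J\in\ch(Q)}h_J=0,\qquad \sum_{J\in\ch(Q)}\eta_J=0,
\]
because the child averages of a parent average to the parent average; this uses $|J|=2^{-d_i}|Q|$. We apply Lemma~\ref{lem:scalar-estimate} with $a=a_Q$, $b=b_Q$, $h=h_J$, $\eta=\eta_J$; the hypothesis $b+\eta=b_J>0$ holds since $v>0$. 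Multiplying by $|J|$ and summing over $J\in\ch(Q)$, both linear terms $2a_Qh_J/b_Q^{2-p}$ and $(2-p)a_Q^2\eta_J/b_Q^{3-p}$ cancel by the two identities above. This gives
\[
\sum_{J\in\ch(Q)}|J|\,\Psi_p(a_J,b_J)-|Q|\,\Psi_p(a_Q,b_Q)
\geq\frac{p-1}{3-p}\sum_{J\in\ch(Q)}|J|\,\frac{h_J^2}{\max\{b_Q,b_J\}^{2-p}}.
\]

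Next we compare the denominator with $b_Q=\langle v\rangle_Q$. Since $v>0$, we have $b_J\le 2^{d_i}b_Q$, hence $\max\{b_Q,b_J\}^{2-p}\le 2^{d_i(2-p)}b_Q^{2-p}$. The right side therefore dominates
\[
\frac{p-1}{3-p}\,2^{-d_i(2-p)}\int_Q\frac{|\Delta_Qu|^2}{\langle v\rangle_Q^{2-p}}.
\]
This is the only place the factor $2^{d_i(2-p)}$ enters, and it matches the stated constant.

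Finally we sum over all $Q\in(\cD_0^i)^\circ$. The left sides telescope: writing $\Phi_n=\sum_{\ell(Q)=2^{n}}|Q|\Psi_p(a_Q,b_Q)$ over $Q\subset I_0^i$, the sum of the left sides over one generation equals $\Phi_{n-1}-\Phi_n$. Summing from $\ell(Q)=\ell(I_0^i)$ down to $\ell(Q)=2^{-k+1}$ gives $\Phi_{-k}-\Phi_{\text{top}}\le\Phi_{-k}$, since $\Psi_p\ge0$. On the smallest cubes $u$ and $v$ are constant, so $\Phi_{-k}=\int_{I_0^i}|u|^2/v^{2-p}$. Rearranging yields \eqref{eq:weighted-square}.

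The argument is essentially mechanical once the scalar lemma is in hand. The one step needing care is the cancellation of the first-order terms, which requires the same $a_Q,b_Q$ for all children and the fact that child averages average to the parent. The denominator comparison $b_J\le 2^{d_i}b_Q$, which uses positivity of $v$, is the other point to check.
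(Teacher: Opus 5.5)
Your proposal is correct and follows the paper's proof essentially step for step. Both proofs apply Lemma~\ref{lem:scalar-estimate} to parent and child averages, cancel the linear terms after weighting by $|J|$, bound $\max\{b_Q,b_J\}\leq 2^{d_i}b_Q$, and telescope over generations using $\Psi_p\geq0$ and the constancy of $u,v$ on the smallest cubes.
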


\begin{proof}
We apply Lemma~\ref{lem:scalar-estimate} to parent and child averages.
For $Q\in(\cD_0^i)^\circ$ and $J\in\ch(Q)$, take
\[
\begin{aligned}
 (a_0,b_0)&=(\langle u\rangle_Q,\langle v\rangle_Q),
 & (a_1,b_1)&=(\langle u\rangle_J,\langle v\rangle_J),\\
 h&=a_1-a_0,
 & \eta&=b_1-b_0.
\end{aligned}
\]
Positivity of $v$ gives $b_0,b_1>0$ and
\[
 b_1=\frac1{|J|}\int_Jv
 \leq\frac1{|J|}\int_Qv
 =\frac{|Q|}{|J|}b_0=2^{d_i}b_0.
\]
Consequently,
\[
 \max\{b_0,b_1\}\leq2^{d_i}b_0.
\]
The scalar inequality therefore gives
\begin{equation}\label{eq:pair-increment}
\begin{aligned}
 &\Psi_p(a_1,b_1)-\Psi_p(a_0,b_0)
       -\frac{2a_0}{b_0^{2-p}}(a_1-a_0)
       +\frac{(2-p)a_0^2}{b_0^{3-p}}(b_1-b_0)\\
 &\qquad\geq\frac{p-1}{3-p}
       \frac{|a_1-a_0|^2}{\max\{b_0,b_1\}^{2-p}}\\
 &\qquad\geq2^{-d_i(2-p)}\frac{p-1}{3-p}
       \frac{|\langle u\rangle_J-\langle u\rangle_Q|^2}
            {\langle v\rangle_Q^{2-p}}.
\end{aligned}
\end{equation}

Multiply \eqref{eq:pair-increment} by $|J|$ and sum over the children.
The coefficients of $a_1-a_0$ and $b_1-b_0$ depend only on the
parent averages.  The sums of these two differences are
\begin{align*}
 \sum_{J\in\ch(Q)}|J|(\langle u\rangle_J-\langle u\rangle_Q)
 &=\int_Qu-|Q|\langle u\rangle_Q=0,\\
 \sum_{J\in\ch(Q)}|J|(\langle v\rangle_J-\langle v\rangle_Q)
 &=\int_Qv-|Q|\langle v\rangle_Q=0.
\end{align*}
Thus the linear terms cancel.  Moreover,
\[
 \Delta_Qu(x)=\langle u\rangle_J-\langle u\rangle_Q
 \quad(x\in J\in\ch(Q)),
\]
so disjointness of the children gives
\[
 \sum_{J\in\ch(Q)}|J|
 \frac{|\langle u\rangle_J-\langle u\rangle_Q|^2}
      {\langle v\rangle_Q^{2-p}}
 =\int_Q\frac{|\Delta_Qu|^2}{\langle v\rangle_Q^{2-p}}.
\]
The sum of \eqref{eq:pair-increment} is therefore
\begin{align*}
 2^{-d_i(2-p)}\frac{p-1}{3-p}\int_Q
       \frac{|\Delta_Qu|^2}{\langle v\rangle_Q^{2-p}}
 &\leq\sum_{J\in\ch(Q)}|J|
          \Psi_p(\langle u\rangle_J,\langle v\rangle_J)\\
 &\hspace{8mm}-|Q|\Psi_p(\langle u\rangle_Q,\langle v\rangle_Q).
\end{align*}
Summing over $Q\in(\cD_0^i)^\circ$ gives
\begin{align*}
 &\sum_Q\left[
     \sum_{J\in\ch(Q)}|J|\Psi_p(\langle u\rangle_J,\langle v\rangle_J)
          -|Q|\Psi_p(\langle u\rangle_Q,\langle v\rangle_Q)\right]\\
 &=\sum_{\substack{J\in\cD_0^i\\J\ne I_0^i}}
       |J|\Psi_p(\langle u\rangle_J,\langle v\rangle_J)
       -\sum_{Q\in(\cD_0^i)^\circ}
       |Q|\Psi_p(\langle u\rangle_Q,\langle v\rangle_Q)\\
 &=\sum_{\substack{J\in\cD_0^i\\\ell(J)=2^{-k}}}
       |J|\Psi_p(\langle u\rangle_J,\langle v\rangle_J)
       -|I_0^i|\Psi_p(\langle u\rangle_{I_0^i},\langle v\rangle_{I_0^i})\\
 &=\int_{I_0^i}\Psi_p(u,v)
       -|I_0^i|\Psi_p(\langle u\rangle_{I_0^i},\langle v\rangle_{I_0^i})
 \leq\int_{I_0^i}\Psi_p(u,v).
\end{align*}
The last equality uses that $u$ and $v$ are constant on every
$J\in\cD_0^i$ with $\ell(J)=2^{-k}$, so
$|J|\Psi_p(\langle u\rangle_J,\langle v\rangle_J)
=\int_J\Psi_p(u,v)$.
The final inequality uses $\Psi_p\geq0$.
This proves \eqref{eq:weighted-square}.
\end{proof}

\subsection{The product square-function estimate}

We return to the positive function $f$ on $I_0$ from
Section~\ref{sec:geometry}, constant on the smallest cubes.
Our next step is to iterate Lemma~\ref{lem:weighted-square} over
a set of coordinates $A$, obtaining a weighted estimate for the
products $\Delta_{K^A}f$.  In the next subsection, cross-coordinate
averaging will transfer this estimate to the projection $F$.

We write $A^c=\{1,\ldots,m\}\setminus A$.

\begin{lemma}[A product square-function estimate]\label{lem:product-square}
For every nonempty coordinate set $A$,
\begin{equation}\label{eq:product-square}
 \sum_{K^A\in(\cD_0^A)^\circ}\int_{K^A\times I_0^{A^c}}
     \frac{|\Delta_{K^A}f|^2}{(E_{K^A}f)^{2-p}}
 \leq2^{(2-p)\sum_{i\in A}d_i}
       \left(\frac{3-p}{p-1}\right)^{|A|}\|f\|_p^p,
 \qquad1<p\leq2.
\end{equation}
\end{lemma}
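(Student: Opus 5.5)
We argue by induction on $|A|$, applying Lemma~\ref{lem:weighted-square} one coordinate at a time, with the weight in each step being the average of $f$ over the coordinates already processed. Order $A=\{i_1,\ldots,i_r\}$ and write $A_s=\{i_1,\ldots,i_s\}$, with $A_0=\varnothing$. Define, for $0\leq s\leq r$,
\[
 T_s=\sum_{K^{A_s}\in(\cD_0^{A_s})^\circ}\int_{K^{A_s}\times I_0^{A_s^c}}
     \frac{|\Delta_{K^{A_s}}f|^2}{(E_{K^{A_s}}f)^{2-p}} .
\]
For $s=0$ the empty products are the identity, so $T_0=\int_{I_0}f^2/f^{2-p}=\|f\|_p^p$. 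The goal is the recursive bound
\[
 T_s\leq2^{d_{i_s}(2-p)}\tfrac{3-p}{p-1}\,T_{s-1},
\]
and iterating it from $T_0$ gives \eqref{eq:product-square}.

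To prove the recursion, fix $K^{A_{s-1}}\in(\cD_0^{A_{s-1}})^\circ$ and a point $x'\in K^{A_{s-1}}$ in the coordinates $A_{s-1}$. Also fix all coordinates outside $A_s$. As a function of $x_{i_s}$ alone, set
\[
 u=\Delta_{K^{A_{s-1}}}f,\qquad v=E_{K^{A_{s-1}}}f .
\]
Both are constant on the smallest cubes in coordinate $i_s$, because $f$ is and these operators act in other coordinates. Also $v>0$, since $f>0$ on $I_0$.

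The operators in different coordinates commute. Hence
\[
 \Delta_{Q}u=\Delta_{K^{A_{s-1}}\times Q}f \quad\text{and}\quad \langle v\rangle_Q=E_{K^{A_{s-1}}\times Q}f
\]
on $Q$, for $Q\in(\cD_0^{i_s})^\circ$. Lemma~\ref{lem:weighted-square} in coordinate $i_s$ then gives
\[
 \sum_{Q}\int_Q\frac{|\Delta_{K^{A_{s-1}}\times Q}f|^2}{(E_{K^{A_{s-1}}\times Q}f)^{2-p}}\ud x_{i_s}
 \leq2^{d_{i_s}(2-p)}\frac{3-p}{p-1}\int_{I_0^{i_s}}\frac{|\Delta_{K^{A_{s-1}}}f|^2}{(E_{K^{A_{s-1}}}f)^{2-p}}\ud x_{i_s}.
\]
Next, integrate this over $x'\in K^{A_{s-1}}$ and over the coordinates outside $A_s$ (ranging over $I_0^{A_s^c}$), and sum over $K^{A_{s-1}}$. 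Every pair $(K^{A_{s-1}},Q)$ ranges exactly over $K^{A_s}\in(\cD_0^{A_s})^\circ$, so the left side becomes $T_s$ and the right side becomes the constant times $T_{s-1}$. This is the recursion.

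The point needing care is the bookkeeping. The integrand on the left of \eqref{eq:product-square} must be integrated over $K^A\times I_0^{A^c}$, which is where $\Delta_{K^A}f$ is supported in the $A$ coordinates. The weight $E_{K^A}f$ is the product average in the $A$ coordinates, but it remains a function of the $A^c$ coordinates. We must check two things: that the slice-wise weight $\langle v\rangle_Q$ matches $E_{K^{A_s}}f$ pointwise, and that the previous-stage integrand is integrated exactly over $K^{A_{s-1}}\times I_0^{A_{s-1}^c}$. Both follow from commutation of coordinate operators and from the fact that $\Delta_{K^{A_{s-1}}}f$ vanishes off $K^{A_{s-1}}$ in those coordinates. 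No other difficulty is expected, since the loss $(p-1)^{-1}$ appears exactly once per coordinate of $A$.
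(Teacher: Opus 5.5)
Your proposal is correct and is essentially the paper's proof. The paper likewise fixes the remaining coordinates, applies Lemma~\ref{lem:weighted-square} in a single coordinate $j\in A$ with $u=\Delta_{K^B}f$ and $v=E_{K^B}f$ for $B=A\setminus\{j\}$, and iterates this one-step bound $|A|$ times down to $\int_{I_0}f^2/f^{2-p}=\|f\|_p^p$; the only difference is that it peels coordinates off $A$, whereas your $T_s$ adds them one at a time.
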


\begin{proof}
We first separate one coordinate $j\in A$ in the left side of
\eqref{eq:product-square}.  Put $B=A\setminus\{j\}$; each sum
over $K^i$ below is over $(\cD_0^i)^\circ$.
For each fixed $K^B$, write
\[
 u_B=\Delta_{K^B}f,
 \qquad
 v_B=E_{K^B}f.
\]
Since operators in distinct coordinates commute, Fubini's theorem gives
\[
\begin{aligned}
 &\sum_{K^A}\int_{K^A\times I_0^{A^c}}
       \frac{|\Delta_{K^A}f(x)|^2}{(E_{K^A}f(x))^{2-p}}\ud x\\
 &=\sum_{K^B}\int_{K^B\times I_0^{A^c}}
       \Bigg[\sum_{K^j}\int_{K^j}
       \frac{|\Delta_{K^j}u_B(x)|^2}{(E_{K^j}v_B(x))^{2-p}}\ud x_j\Bigg]
       \ud x^{A_j}.
\end{aligned}
\]
Since $j\notin B$, the operators defining $u_B$ and $v_B$ act only
in other coordinates and therefore preserve $f$'s constancy on the
smallest cubes in coordinate $j$.  For fixed
$x^{A_j}\in K^B\times I_0^{A^c}$, we also have $v_B(x)>0$
for every $x_j\in I_0^j$.
Lemma~\ref{lem:weighted-square} therefore estimates the inner sum by
\[
 \sum_{K^j}\int_{K^j}
       \frac{|\Delta_{K^j}u_B(x)|^2}{(E_{K^j}v_B(x))^{2-p}}\ud x_j
 \leq2^{d_j(2-p)}\frac{3-p}{p-1}
       \int_{I_0^j}\frac{u_B(x)^2}{v_B(x)^{2-p}}\ud x_j.
\]
Substituting this bound into the preceding equality and using
$B^c=A^c\cup\{j\}$ gives
\begin{align}
 &\sum_{K^A}\int_{K^A\times I_0^{A^c}}
       \frac{|\Delta_{K^A}f|^2}{(E_{K^A}f)^{2-p}}
 \notag\\
 &\qquad\leq2^{d_j(2-p)}\frac{3-p}{p-1}
       \sum_{K^B}\int_{K^B\times I_0^{B^c}}
       \frac{|\Delta_{K^B}f|^2}{(E_{K^B}f)^{2-p}}.
 \label{eq:square-iteration}
\end{align}
Iterating \eqref{eq:square-iteration} $|A|$ times bounds the left
side of \eqref{eq:product-square} by
\[
 \left(\prod_{i\in A}2^{d_i(2-p)}\frac{3-p}{p-1}\right)
 \int_{I_0}\frac{f^2}{f^{2-p}}
 =2^{(2-p)\sum_{i\in A}d_i}
       \left(\frac{3-p}{p-1}\right)^{|A|}\|f\|_p^p.
\]
\end{proof}

\subsection{Square-function estimates for the projected function}

Recall from Definition~\ref{def:projection} that
\[
 F=f-\sum_{\substack{L\in\cD_0^\circ\\
             L\subset I\text{ for some }I\in\mathcal G}}\Delta_Lf.
\]
The property of $F$ used here is the cross-coordinate averaging
identity from Lemma~\ref{lem:cross}: for
$A_j=\{1,\ldots,m\}\setminus\{j\}$,
\[
 \Delta_{K^{A_j}}F=X_{K^{A_j}}\Delta_{K^{A_j}}f,
 \qquad K^{A_j}\in(\cD_0^{A_j})^\circ.
\]
We will use this identity to transfer the estimate of
Lemma~\ref{lem:product-square}, with $|A|=m-1$, from $f$ to $F$.

We use the square functions $S_{A_j}F$ and $WF$ defined in
\eqref{eq:projected-square-functions}.  The maximal function
$M_{\mathcal B}f$ bounds the averages that appear in the
denominators after cross-coordinate averaging.

\begin{lemma}[A square-function estimate for the projected function]\label{lem:projected-square}
We have
\begin{equation}\label{eq:projected-square}
 \int_{I_0}\frac{(WF)^2}{(M_{\mathcal B}f)^{2-p}}
 \leq C_{m,d}(p-1)^{-(m-1)}\|f\|_p^p,
 \qquad1<p\leq2.
\end{equation}
The constant depends only on $m$ and $d$.
\end{lemma}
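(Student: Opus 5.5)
Since $(WF)^2=\sum_j(S_{A_j}F)^2$, it suffices to fix $j$ and prove
\[
 \sum_{K^{A_j}\in(\cD_0^{A_j})^\circ}\int_{I_0}
 \frac{|\Delta_{K^{A_j}}F|^2}{(M_{\mathcal B}f)^{2-p}}
 \leq C_{m,d}(p-1)^{-(m-1)}\|f\|_p^p,
\]
and then sum over the $m$ values of $j$. Each term $\Delta_{K^{A_j}}F$ is supported in $K^{A_j}\times I_0^j$. By Lemma~\ref{lem:cross} it equals $X_{K^{A_j}}\Delta_{K^{A_j}}f$. The plan is to compare it, term by term, with the weighted expression of Lemma~\ref{lem:product-square} for $A=A_j$ (so $A^c=\{j\}$), which already carries the factor $(p-1)^{-(m-1)}$.

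Fix $K=K^{A_j}$ and a partition cube $P^j\in\mathcal P^j_{K}$, and let $x\in K\times P^j$. Then $X_K\Delta_Kf(x)=E_{P^j}\Delta_Kf(x)$. By \eqref{eq:averaged-denominator}, $M_{\mathcal B}f(x)\geq E_{P^j}E_Kf(x)$, and the right side is constant in $x_j\in P^j$. Hence
\[
 \int_{P^j}\frac{|E_{P^j}\Delta_Kf|^2}{(M_{\mathcal B}f)^{2-p}}\ud x_j
 \leq |P^j|\,\frac{\langle\Delta_Kf\rangle_{P^j}^2}{\langle E_Kf\rangle_{P^j}^{2-p}} .
\]
The key step is the Jensen-type inequality for the jointly convex function $\Psi_p(a,b)=a^2/b^{2-p}$. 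It is convex on $\R\times(0,\infty)$ for $0\leq 2-p\leq1$; positivity of the second variation follows, for instance, from the computation \eqref{eq:convexity-calculation} in Lemma~\ref{lem:scalar-estimate}. This gives
\[
 |P^j|\,\Psi_p\bigl(\langle\Delta_Kf\rangle_{P^j},\langle E_Kf\rangle_{P^j}\bigr)
 \leq\int_{P^j}\Psi_p(\Delta_Kf,E_Kf)\ud x_j .
\]
Here $E_Kf>0$ because $f>0$. Summing over the partition $\mathcal P^j_K$ of $I_0^j$, integrating over $x^{A_j}\in K$, and summing over $K$ produces exactly the left side of \eqref{eq:product-square} with $A=A_j$. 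That lemma then bounds it by $2^{(2-p)d}((3-p)/(p-1))^{m-1}\|f\|_p^p\leq C_{m,d}(p-1)^{-(m-1)}\|f\|_p^p$.

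The main obstacle is the denominator: it depends on the index $K$, whereas the weight $M_{\mathcal B}f$ is common to all terms. This is handled by the pointwise bound from \eqref{eq:averaged-denominator}, which compares $M_{\mathcal B}f$ with the $K$-dependent cross-coordinate average of $E_Kf$, combined with the joint convexity of $\Psi_p$. To make the argument rigorous, I would verify the joint convexity carefully. Since $\Psi_p$ is homogeneous of degree $p$ and the Hessian quadratic form is nonnegative, the bound for $q''$ above already shows convexity along every segment, and Jensen's inequality for averages over $P^j$ then follows. The constancy of $f$ on the smallest cubes is needed only inside the earlier lemmas.
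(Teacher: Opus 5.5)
Your proposal is correct and follows the paper's proof. Both arguments expand $(WF)^2$ over $j$ and $K^{A_j}$, apply the pointwise bound \eqref{eq:averaged-denominator}, use a Jensen-type inequality on each partition cube $P^j$, note that $X_{K^{A_j}}$ preserves $x_j$-integrals, and conclude with Lemma~\ref{lem:product-square} for $A=A_j$. The only difference is how the Jensen step is justified: the paper uses Cauchy--Schwarz together with $E_{P^j}(v^{2-p})\leq(E_{P^j}v)^{2-p}$, while you use joint convexity of $\Psi_p$, read off from \eqref{eq:convexity-calculation}; that justification is equally valid.
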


\begin{proof}
We expand the left side of \eqref{eq:projected-square}.
Since $\Delta_{K^{A_j}}F(x)=0$ unless $x^{A_j}\in K^{A_j}$,
the definition of $WF$ and Lemma~\ref{lem:cross} give
\begin{equation}\label{eq:projected-square-expansion}
\begin{aligned}
 \int_{I_0}\frac{(WF)^2}{(M_{\mathcal B}f)^{2-p}}
 &=\sum_{j=1}^m\sum_{K^{A_j}\in(\cD_0^{A_j})^\circ}
       \int_{K^{A_j}\times I_0^j}
       \frac{|\Delta_{K^{A_j}}F|^2}{(M_{\mathcal B}f)^{2-p}}\\
 &=\sum_{j=1}^m\sum_{K^{A_j}\in(\cD_0^{A_j})^\circ}
       \int_{K^{A_j}\times I_0^j}
       \frac{|X_{K^{A_j}}\Delta_{K^{A_j}}f|^2}{(M_{\mathcal B}f)^{2-p}}.
\end{aligned}
\end{equation}
Fix $j$ and $K^{A_j}\in(\cD_0^{A_j})^\circ$.
By \eqref{eq:averaged-denominator},
\begin{equation}\label{eq:averaged-denominator-bound}
 X_{K^{A_j}}E_{K^{A_j}}f(x)\leq M_{\mathcal B}f(x),
 \qquad x\in K^{A_j}\times I_0^j.
\end{equation}
Fix $x\in K^{A_j}\times I_0^j$, and let
$P^j\in\mathcal P_{K^{A_j}}^j$ contain $x_j$.
Put $u=\Delta_{K^{A_j}}f$ and $v=E_{K^{A_j}}f$;
then $v>0$ on $K^{A_j}\times I_0^j$.
Cauchy--Schwarz for the average over $P^j$ gives
\[
\begin{aligned}
 |E_{P^j}u(x)|^2
 &=\left|E_{P^j}\!\left(\frac{u}{v^{(2-p)/2}}v^{(2-p)/2}
                  \right)(x)\right|^2\\
 &\leq E_{P^j}\!\left(\frac{u^2}{v^{2-p}}\right)(x)
       E_{P^j}(v^{2-p})(x).
\end{aligned}
\]
Monotonicity of normalized $L^s$ averages
and division by $(E_{P^j}v(x))^{2-p}$ give
\[
\begin{aligned}
 E_{P^j}(v^{2-p})(x)&\leq(E_{P^j}v(x))^{2-p},\\
 \frac{|E_{P^j}u(x)|^2}{(E_{P^j}v(x))^{2-p}}
 &\leq E_{P^j}\!\left(\frac{u^2}{v^{2-p}}\right)(x).
\end{aligned}
\]
Since $x_j\in P^j$, we have
$X_{K^{A_j}}g(x)=E_{P^j}g(x)$.
Thus \eqref{eq:averaged-denominator-bound} and $2-p\geq0$ give
\begin{equation}\label{eq:averaged-square}
\begin{aligned}
 \frac{|X_{K^{A_j}}\Delta_{K^{A_j}}f(x)|^2}{(M_{\mathcal B}f(x))^{2-p}}
 &\leq\frac{|X_{K^{A_j}}\Delta_{K^{A_j}}f(x)|^2}
                  {(X_{K^{A_j}}E_{K^{A_j}}f(x))^{2-p}}\\
 &\leq X_{K^{A_j}}\!\left(
       \frac{|\Delta_{K^{A_j}}f|^2}{(E_{K^{A_j}}f)^{2-p}}\right)(x).
\end{aligned}
\end{equation}
For fixed $x^{A_j}\in K^{A_j}$, the definition of
$X_{K^{A_j}}$ gives
\[
\begin{aligned}
 \int_{I_0^j}X_{K^{A_j}}g(x)\ud x_j
 &=\sum_{P^j\in\mathcal P_{K^{A_j}}^j}
       \int_{P^j}E_{P^j}g(x)\ud x_j\\
 &=\sum_{P^j\in\mathcal P_{K^{A_j}}^j}
       \int_{P^j}g(x)\ud x_j
 =\int_{I_0^j}g(x)\ud x_j.
\end{aligned}
\]
Integrating \eqref{eq:averaged-square} over
$K^{A_j}\times I_0^j$, summing over $K^{A_j}$, and applying
Lemma~\ref{lem:product-square} gives
\begin{align*}
 &\sum_{K^{A_j}\in(\cD_0^{A_j})^\circ}\int_{K^{A_j}\times I_0^j}
       \frac{|X_{K^{A_j}}\Delta_{K^{A_j}}f|^2}{(M_{\mathcal B}f)^{2-p}}\\
 &\leq\sum_{K^{A_j}\in(\cD_0^{A_j})^\circ}\int_{K^{A_j}\times I_0^j}
       X_{K^{A_j}}\!\left(\frac{|\Delta_{K^{A_j}}f|^2}
                             {(E_{K^{A_j}}f)^{2-p}}\right)\\
 &=\sum_{K^{A_j}\in(\cD_0^{A_j})^\circ}\int_{K^{A_j}\times I_0^j}
       \frac{|\Delta_{K^{A_j}}f|^2}{(E_{K^{A_j}}f)^{2-p}}\\
 &\leq2^{(2-p)\sum_{i\in A_j}d_i}
       \left(\frac{3-p}{p-1}\right)^{m-1}\|f\|_p^p.
\end{align*}
Summing these bounds in \eqref{eq:projected-square-expansion}, and
using $\sum_{i\in A_j}d_i\leq d$, $2-p\leq1$, and $3-p\leq2$, gives
\[
\begin{aligned}
 \int_{I_0}\frac{(WF)^2}{(M_{\mathcal B}f)^{2-p}}
 &\leq m2^d\left(\frac{2}{p-1}\right)^{m-1}\|f\|_p^p
 =m2^{d+m-1}(p-1)^{-(m-1)}\|f\|_p^p,
\end{aligned}
\]
which proves \eqref{eq:projected-square}.
\end{proof}

\section{The Zygmund estimate for arbitrary side lengths}
\label{sec:continuous-zygmund}

We now remove the dyadic restriction from the Zygmund estimate.
The coordinate factors remain $\R^{d_1},\ldots,\R^{d_m}$, but the
rectangles may have arbitrary positions and
positive side lengths.

Let $m\geq3$ and let
$\phi\colon(0,\infty)^{m-1}\to(0,\infty)$ be coordinatewise
nondecreasing. Let $\mathcal R_\phi$ consist of all rectangles
$R=R^1\times\cdots\times R^m$ with axis-parallel cube factors
$R^i\subset\R^{d_i}$ and side-length tuple
\[
 (s_1,\ldots,s_{m-1},\phi(s_1,\ldots,s_{m-1})),
 \qquad s_1,\ldots,s_{m-1}>0.
\]
For $f\in L^1_{\mathrm{loc}}(\R^d)$, define
\[
 M_{\mathcal R_\phi}f(x)
 =\sup_{\substack{R\in\mathcal R_\phi\\x\in R}}
       \langle|f|\rangle_R.
\]
This function is measurable: its strict level sets are open.
Indeed, $R+a\in\mathcal R_\phi$ whenever $R\in\mathcal R_\phi$, and
local translation continuity gives
\[
 \bigl|\langle|f|\rangle_{R+a}-\langle|f|\rangle_R\bigr|
 \leq\frac1{|R|}\int_R|f(x+a)-f(x)|\ud x\longrightarrow0
 \quad\text{as }a\to0.
\]
If $x\in R$ and $\langle|f|\rangle_R>\lambda$, a sufficiently small
translation therefore gives
\[
 x\in\operatorname{int}(R+a)
 \subset\{M_{\mathcal R_\phi}f>\lambda\}.
\]
This proves openness, including at points on the boundary of $R$.

\begin{theorem}[The continuous Zygmund estimate]\label{thm:continuous-zygmund}
For the family $\mathcal R_\phi$ defined above and every $\lambda>0$,
\begin{equation}\label{eq:continuous-zygmund}
 |\{x:M_{\mathcal R_\phi}f(x)>\lambda\}|
 \leq C_{m,d}\int_{\R^d}\frac{|f(x)|}{\lambda}
 \left[\log\!\left(e+\frac{|f(x)|}{\lambda}\right)\right]^{m-2}\ud x.
\end{equation}
The constant depends only on $m$ and $d$, not on $\phi$.
\end{theorem}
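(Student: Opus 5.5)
The plan follows Rey's reduction, strengthened so that dyadic rounding is harmless. First I replace $\mathcal R_\phi$ by dyadic rectangles. Use the standard $3^d$-adjacent (or one-third-shifted) grids: every cube in $\R^{d_i}$ is contained in a cube of one of finitely many shifted dyadic grids $\cD^{i,t}$ with side length at most a fixed multiple of its own. A rectangle $R\in\mathcal R_\phi$ with side lengths $(s_1,\ldots,s_{m-1},\phi(s))$ is then covered by a rectangle with dyadic side lengths $(2^{k_1},\ldots,2^{k_{m-1}},2^{k_m})$. Here $k_i=\lceil\log_2 s_i\rceil+O(1)$ for $i<m$ and $k_m=\lceil\log_2\phi(s)\rceil+O(1)$.

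The rounded last exponent is not a function of $(k_1,\ldots,k_{m-1})$. Instead, the pairs $(k',k_m)$ that arise have a weaker monotonicity: if $k'\le k''$ coordinatewise with some strict inequality in the rounded values, then the corresponding $\phi$ values compare, up to a bounded shift. So I work with a dyadic family $\mathcal Z$ that satisfies the weaker containment hypothesis of the Introduction, $I\subset J\Rightarrow I^i=J^i$ for some $i$, after the C\'ordoba--Fefferman selection. Since there are finitely many grids and finitely many bounded offsets, it suffices to prove the weak $L(\log L)^{m-2}$ bound for $M_{\mathcal Z}$ with each fixed choice, and sum the $C_{m,d}$ many pieces.

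For the dyadic estimate, fix $\lambda$ and a finite family $\mathcal F$ of rounded rectangles with $\langle|f|\rangle_I>\lambda$. By inner regularity and the openness shown above, it suffices to bound $|\sh(\mathcal F)|$. I run a C\'ordoba--Fefferman selection, ordering by decreasing last side length and then by the other coordinates. An $I$ is selected if $|I\cap\sh(\text{previously selected})|\le\frac12|I|$, and otherwise it lies in $\{M_{\mathcal G}\1_{\sh\mathcal G}>1/2\}$ or a similar set. The goal is $|\sh\mathcal F|\lesssim|\sh\mathcal G|$ together with two properties of $\mathcal G$. First, each slice $\mathcal G_u$ (fix $x_m=u$) is uniformly $\eta$-sparse as an $(m-1)$-parameter family. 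Second, $\mathcal G_u$ satisfies the weaker containment condition, which follows from the monotone structure of $\Phi$ and the selection. The covering bound $|\sh\mathcal F|\lesssim|\sh\mathcal G|$ needs a maximal function that saves a logarithm, and this is where the slicing inductively uses the $(m-1)$-parameter estimate (or, more simply, the $m-1$ parameters of Theorem~\ref{thm:maximal} at $p=2$). Then
\[
\lambda|\sh\mathcal G|\le\int|f|\,h_{\mathcal G},\qquad h_{\mathcal G}(x,u)=h_{\mathcal G_u}(x).
\]

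Next I apply the exponential estimate (\ref{eq:overlap-exponential}), extended to the weaker containment hypothesis (Theorem~\ref{thm:weaker-containment}), with $m-1$ parameters and power $1/(m-2)$ on each slice. Integrating in $u$ gives $\int_{\sh\mathcal G}[\exp(c\,h_{\mathcal G}^{1/(m-2)})-1]\le C|\sh\mathcal G|$. The Young inequality for the complementary pair $\Phi(t)=t(\log(e+t))^{m-2}$ and $\Psi(s)\approx e^{cs^{1/(m-2)}}-1$, applied to $\frac{|f|}{\lambda}\cdot h_{\mathcal G}$ with a small parameter $\epsilon$, gives
\[
|\sh\mathcal G|\le \epsilon^{-1}C\int\frac{|f|}{\lambda}\Bigl(\log\bigl(e+\tfrac{|f|}{\lambda}\bigr)\Bigr)^{m-2}+\epsilon C|\sh\mathcal G|,
\]
and absorbing the last term finishes the argument.

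The main obstacle is the rounding step. I must show that the selected slices satisfy the weaker containment condition even though rounding breaks exact incomparability. If $I\subset J$ in a slice with all $A_m$-factors strictly nested, then $J$ has strictly larger rounded sides in every coordinate $i<m$. I want monotonicity of $\phi$ to force $\ell(J^m)\ge\ell(I^m)$, so that $I$ would have been rejected by the selection as essentially covered by $J$. I would first try to handle this by ordering the selection by the rounded last side and using the bounded rounding offsets: either choose the offsets consistently via $\lceil\log_2\rceil$, which is monotone, or pass to subfamilies of a fixed residue to reduce to equal-or-ordered last sides.
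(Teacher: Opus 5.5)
Your outline matches the paper's proof: cover by shifted dyadic grids, select with comparable shadow, show that each slice $\mathcal G_t$ is $\tfrac12$-sparse and satisfies \eqref{eq:weaker-containment} in $m-1$ coordinates, apply Theorem~\ref{thm:weaker-containment} slicewise, integrate in $t$, then use Young's inequality and absorption. The one step you leave open is the rounding, and the property you first state there is not the one you need.

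\textbf{The order property must be exact.} You need: if $k,l$ are rounded scale tuples with $k_i<l_i$ for \emph{every} $i<m$, then $k_m\leq l_m$, with no shift. This is the paper's \eqref{eq:rounded-order}.
\begin{itemize}
\item Strict inequality in only some coordinates gives nothing. In a tied coordinate the unrounded $s_i,t_i$ can be in either order, so $\phi(s)$ and $\phi(t)$ cannot be compared. This is exactly why the slices are only weakly containing rather than incomparable.
\item A comparison only up to a bounded shift is not enough. The common point $t$ could then give $J^m\subsetneq I^m$ instead of $I^m\subset J^m$. The rectangles $I$ and $J$ would both survive the selection, and their slices would violate \eqref{eq:weaker-containment}.
\end{itemize}
The exact property holds once each coordinate is rounded by one fixed nondecreasing rule. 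Then $\lceil\log_2s_i\rceil<\lceil\log_2t_i\rceil$ forces $s_i<t_i$ for all $i<m$, hence $\phi(s)\leq\phi(t)$, hence the rounded last scales are ordered.

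So your option of consistent rounding, or of fixing the offset vector in each subfamily, is the right one. It requires covers with prescribed side length. The paper gets these from Lemma~\ref{lem:prescribed-cover} with $\rho(s)=\lceil\log_2s\rceil+2$. You can also get them from your ``at most a fixed multiple'' covers by passing to the dyadic ancestor of the prescribed size.

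\textbf{Two smaller corrections.}
\begin{itemize}
\item The bound $|\sh(\mathcal F)|\lesssim|\sh(\mathcal G)|$ does not need a maximal function that saves a logarithm. Every rejected rectangle lies in $\{M_{\cD}\1_{\sh(\mathcal G)}>\tfrac12\}$, and the $L^2$ bound for the ordinary strong maximal function suffices. Theorem~\ref{thm:maximal} would not apply to $\mathcal F$ anyway, since $\mathcal F$ is not incomparable.
\item With a greedy selection you must break ties in the last side length so that, whenever $I\subsetneq J$, $J$ is processed before $I$. This makes $\mathcal G$ incomparable. You use that property twice: for the identity $h_{\mathcal G}(x,t)=h_{\mathcal G_t}(x)$, and for the final contradiction that a contained $I$ would have been rejected. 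The paper's variational selection, Lemma~\ref{lem:selection}, gives the symmetric condition $|I\cap\sh(\mathcal G\setminus\{I\})|\leq\tfrac12|I|$, which yields incomparability automatically.
\end{itemize}
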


No continuity or doubling assumption on $\phi$ is imposed.
For $m=3$ with one-dimensional factors, this is C\'ordoba's
theorem~\cite{Cordoba}. The passage to arbitrary side lengths in
that case is also discussed by Mart\'inez~\cite{Martinez}; see
Rey's account in~\cite{Rey}.
We use Rey's selection and slicing argument after covering the
rectangles by dyadic rectangles of prescribed side lengths.
The slices of these covers need not be incomparable. The next theorem
extends our maximal and sparse-overlap estimates to the weaker
condition satisfied by these slices.

\subsection{A weaker dyadic containment hypothesis}

\begin{theorem}\label{thm:weaker-containment}
Let $m\geq2$ and suppose $\mathcal G\subset\cD$ satisfies
\begin{equation}\label{eq:weaker-containment}
 I,J\in\mathcal G,\quad I\subset J
 \quad\Longrightarrow\quad I^i=J^i
              \text{ for some }i\in\{1,\ldots,m\}.
\end{equation}
Then
\begin{equation}\label{eq:weaker-maximal}
 \|M_{\mathcal G}f\|_p
 \leq C_{m,d}(p')^{m-1}\|f\|_p,\qquad1<p\leq2.
\end{equation}
If $\mathcal G$ is also $\eta$-sparse and has finite shadow, the
moment and exponential estimates of Theorem~\ref{thm:overlap}
hold as well.
\end{theorem}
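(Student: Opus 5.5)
The plan is to rerun the proof of Theorem~\ref{thm:maximal} verbatim and locate each place where incomparability of $\mathcal G$ was used. I expect every such place to survive under \eqref{eq:weaker-containment}, after one small enlargement of the averaging family. The reductions in the proof of Theorem~\ref{thm:maximal} need nothing about $\mathcal G$ beyond its being a subfamily of $\cD$: exhaustion by finite subfamilies, localization to top cubes $I_0$, and replacing $f$ by $\widetilde f+\varepsilon\1_{I_0}$. So it suffices to prove Lemma~\ref{lem:represented-bound} under \eqref{eq:weaker-containment}. The overlap moment and exponential bounds then follow by the same duality and Chebyshev argument as in Theorem~\ref{thm:overlap}, since that argument uses only sparseness and the maximal bound.

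\textbf{Where incomparability entered.} In Section~\ref{sec:geometry} the projection $F$ and Lemma~\ref{lem:cross} are purely algebraic and do not use it. For \eqref{eq:original-averages}, a nonzero $E_I\Delta_Lf$ with $L\subset J\in\mathcal G$ forces $I^i\subsetneq L^i\subset J^i$ for every $i$. This gives $I\subset J$ with no equal coordinate, which \eqref{eq:weaker-containment} excludes, so \eqref{eq:original-averages} still holds. The proof of \eqref{eq:preservation} for $R\in\mathcal B$ uses only maximality of $R^j$ in $\mathcal A^j_{R^{A_j}}$, not incomparability. The same is true of \eqref{eq:representation}, which uses only that $\ell(R^j)>2^{-k}$ forces $R\subset I$ for some $I\in\mathcal G$. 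The one genuine use is Lemma~\ref{lem:original-averaging-rectangles}, namely $\mathcal G\subset\mathcal B$. Under the weaker hypothesis this can fail: $I^j$ may be strictly contained in some $J^j$ with $I^{A_j}\subset J^{A_j}$, where $I$ and $J$ share a coordinate in $A_j$.

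\textbf{The fix.} Replace $\mathcal B$ by $\mathcal B':=\mathcal B\cup\mathcal G$. For $I\in\mathcal G$, preservation $E_IF=E_If$ is \eqref{eq:original-averages}. The representation \eqref{eq:representation} holds on $I$ because the local expansion \eqref{eq:local-product-expansion} is a sum of $\Delta_LF$ over $L\subset I$, and these vanish by \eqref{eq:projection-properties}. Hence Lemma~\ref{lem:representation} holds for all $R\in\mathcal B'$. Next I will check the remaining steps with $\mathcal B'$ in place of $\mathcal B$:
\begin{itemize}
\item Lemma~\ref{lem:maximal-to-square} is proved for $M_{\mathcal B'}f$ exactly as before.
\item In Lemma~\ref{lem:projected-square}, the denominator bound \eqref{eq:averaged-denominator} only improves, since $M_{\mathcal B}f\leq M_{\mathcal B'}f$ and $2-p\geq0$.
\item $\mathcal B'$ still covers $I_0$, so $M_{\mathcal B'}f>0$ there.
\end{itemize}
The Hölder-and-absorption argument of Lemma~\ref{lem:represented-bound} then gives $\|M_{\mathcal G}f\|_p\leq\|M_{\mathcal B'}f\|_p\leq C_{m,d}(p')^{m-1}\|f\|_p$.

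\textbf{Main obstacle.} The main risk is the bookkeeping: confirming that no other step silently used that members of $\mathcal G$ are partition cubes. I will check in particular the covering claim, \eqref{eq:averaged-denominator}, and the top-average identities \eqref{eq:top-average-decomposition}. All three refer only to $\mathcal P^j$ and $I_0$, so I expect them to be unaffected.
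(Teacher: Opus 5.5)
Your proposal is correct and follows essentially the same route as the paper: verify $E_IF=E_If$ directly on $\mathcal G$ via \eqref{eq:average-difference} and \eqref{eq:weaker-containment}, run Lemma~\ref{lem:representation} and Lemma~\ref{lem:maximal-to-square} on $\mathcal G\cup\mathcal B$, and absorb as in Lemma~\ref{lem:represented-bound}. The only difference is cosmetic. The paper keeps $M_{\mathcal B}f$ in the weight and uses $\|M_{\mathcal B}f\|_p\leq\|M_{\mathcal G\cup\mathcal B}f\|_p$ after H\"older, whereas you put $M_{\mathcal B'}f$ in the weight from the start, which is equivalent.
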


Condition~\eqref{eq:weaker-containment} allows containment when at
least one coordinate cube is unchanged; it excludes strict
containment in every coordinate. Thus Rey's conjectured overlap bound,
as well as the maximal estimate, extends beyond incomparable families.

\begin{proof}
By the reductions in Theorem~\ref{thm:maximal}, it suffices to use
the finite setting and positivity assumptions of
Lemma~\ref{lem:represented-bound}, with
\eqref{eq:weaker-containment} in place of incomparability and
$1<p\leq3/2$. Construct $F$, the partitions
$\mathcal P_{K^{A_j}}^j$, the averages $X_{K^{A_j}}$, and
$\mathcal B$ as in Section~\ref{sec:geometry}.

For any finite $\mathcal G$, the averaging rectangles satisfy
\[
 E_RF=E_Rf,\qquad R\in\mathcal B,
\]
and the representation in Lemma~\ref{lem:representation} holds
on $\mathcal B$. That proof uses only the definition of $F$ and
the maximality of the partition cubes, not incomparability.
The cross-coordinate identities and weighted square-function
estimate likewise do not require incomparability.

In the incomparable case, the argument was to use
\[
 M_{\mathcal G}f\leq M_{\mathcal B}f
\]
and then apply the representation and estimates on $\mathcal B$.
Its only essential use of incomparability was therefore the inclusion
$\mathcal G\subset\mathcal B$ from
Lemma~\ref{lem:original-averaging-rectangles}.

Under \eqref{eq:weaker-containment}, the inclusion
$\mathcal G\subset\mathcal B$ can fail.
We instead verify $E_IF=E_If$ directly on $\mathcal G$.
Once this identity holds on $\mathcal G\cup\mathcal B$, the
representation argument of Lemma~\ref{lem:representation} and the
reduction in Lemma~\ref{lem:maximal-to-square} apply to that union.
Only the estimate for $WF$ then remains, and the existing weighted
estimate gives it by the same absorption argument.

To verify $E_IF=E_If$, fix $I\in\mathcal G$ and a difference $\Delta_Lf$
removed from $f$ in \eqref{eq:projection}, so $L\subset J$ for
some $J\in\mathcal G$. By \eqref{eq:average-difference},
\[
 E_I\Delta_Lf\ne0
 \quad\Longrightarrow\quad
 I^i\subsetneq L^i\subset J^i\quad(1\leq i\leq m).
\]
This contradicts \eqref{eq:weaker-containment}. Together with the
preserved averages on $\mathcal B$, this proves
\begin{equation}\label{eq:weaker-averages}
 E_IF=E_If,\qquad I\in\mathcal G\cup\mathcal B.
\end{equation}
Since $\Delta_LF=0$ for $L\subset I\in\mathcal G$ is already
part of \eqref{eq:projection-properties}, the representation proof in
Lemma~\ref{lem:representation} goes through unchanged on
$\mathcal G\cup\mathcal B$. The proof of
Lemma~\ref{lem:maximal-to-square} therefore gives
\[
 \|M_{\mathcal G\cup\mathcal B}f\|_p
 \leq C_{m,d}(p')^{m-2}\|f\|_p+C_{m,d}\|WF\|_p.
\]
It remains to estimate $WF$. The cross-coordinate identity in
Lemma~\ref{lem:cross} and the bound \eqref{eq:averaged-denominator}
are unchanged, so the weighted estimate of
Lemma~\ref{lem:projected-square} gives
\[
 \int_{I_0}\frac{(WF)^2}{(M_{\mathcal B}f)^{2-p}}
 \leq C_{m,d}(p-1)^{-(m-1)}\|f\|_p^p.
\]
Applying H\"older as in Lemma~\ref{lem:represented-bound} and
substituting into the preceding maximal estimate gives
\[
\begin{aligned}
 \|M_{\mathcal G\cup\mathcal B}f\|_p
 &\leq C_{m,d}(p')^{m-2}\|f\|_p\\
 &\quad+C_{m,d}(p-1)^{-(m-1)/2}
 \|f\|_p^{p/2}\|M_{\mathcal B}f\|_p^{1-p/2}.
\end{aligned}
\]
Now $M_{\mathcal B}f\leq M_{\mathcal G\cup\mathcal B}f$ and
$1-p/2\geq0$ give
\[
 \|M_{\mathcal B}f\|_p^{1-p/2}
 \leq\|M_{\mathcal G\cup\mathcal B}f\|_p^{1-p/2}.
\]
With this bound, the preceding inequality is precisely the one
absorbed in Lemma~\ref{lem:represented-bound}. Hence
\[
 \|M_{\mathcal G}f\|_p
 \leq\|M_{\mathcal G\cup\mathcal B}f\|_p
 \leq C_{m,d}(p')^{m-1}\|f\|_p.
\]
The reductions in Theorem~\ref{thm:maximal} give
\eqref{eq:weaker-maximal} for general $\mathcal G$ and $1<p\leq2$.
The proof of Theorem~\ref{thm:overlap} then gives the sparse
conclusions, using \eqref{eq:weaker-maximal} in its duality estimate.
\end{proof}

\subsection{Dyadic covers with prescribed side lengths}

For $s>0$, define
\begin{equation}\label{eq:rounding-rule}
 \rho(s)=\lceil\log_2s\rceil+2,\qquad
 \rho(s)=k\ \Longleftrightarrow\ 2^{k-3}<s\leq2^{k-2}.
\end{equation}
In particular, $4s\leq2^{\rho(s)}<8s$.

We use the following standard shifted-grid covering fact, with
prescribed side length.

\begin{lemma}\label{lem:prescribed-cover}
For each $i$, there are $3^{d_i}$ dyadic cube grids
$\cD^{i,a}$, indexed by $a\in\{0,1/3,2/3\}^{d_i}$, such that
every axis-parallel cube $Q\subset\R^{d_i}$ of side length $s$
is contained in a cube $P$ from one of these grids with
$\ell(P)=2^{\rho(s)}$.
\end{lemma}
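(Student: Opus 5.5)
We use the standard one-third trick, one coordinate factor at a time. For $a\in\{0,1/3,2/3\}^{d_i}$ let $\cD^{i,a}$ be the shifted grid
\[
 \cD^{i,a}=\bigl\{2^k\bigl([0,1)^{d_i}+n+(-1)^ka\bigr):k\in\Z,\ n\in\Z^{d_i}\bigr\}.
\]
First we check that this is a dyadic grid: for each $k$ its cubes of side $2^k$ partition $\R^{d_i}$, and each such cube is a union of $2^{d_i}$ cubes of side $2^{k-1}$. The nesting follows from the coordinatewise identity $2^k(-1)^ka_\ell=2^{k-1}\bigl(2(-1)^ka_\ell\bigr)$ together with $2(-1)^ka_\ell\equiv(-1)^{k-1}a_\ell\pmod 1$. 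This congruence is equivalent to $3a_\ell\equiv0\pmod1$, which holds since $a_\ell\in\{0,1/3,2/3\}$.

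Because the cube and the grids are both products, the containment question reduces to one dimension. Fix $Q=\prod_{\ell}[x_\ell,x_\ell+s)$ (closedness of faces is harmless, see below). Set $k=\rho(s)$, so that $s\leq2^{k-2}$. It then suffices to find, for each coordinate $\ell$ separately, some $a_\ell\in\{0,1/3,2/3\}$ such that the interval $[x_\ell,x_\ell+s]$ lies in a single interval of the one-dimensional shifted grid of side $2^k$. Choosing $a_\ell$ independently in each coordinate gives a vector $a$, and the product of these intervals is a cube $P\in\cD^{i,a}$ with $\ell(P)=2^{\rho(s)}$ and $Q\subset P$.

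The one-dimensional step is the only real point. For $a_\ell=0,1/3,2/3$, the endpoints of the side-$2^k$ grid intervals are the points $2^k(\Z\pm a_\ell)$. Taken over the three choices of $a_\ell$, the three endpoint sets are disjoint translates of $2^k\Z$ and together form $2^k\Z/3$, which has spacing $2^k/3$. An interval $[x,x+s]$ fails to lie inside one interval for a given $a_\ell$ exactly when it contains an endpoint of that grid in its interior $(x,x+s)$, or more crudely in the closed interval. A closed interval of length $s\leq2^{k-2}<2^k/3$ contains at most one point of $2^k\Z/3$. That point belongs to only one of the three grids, so at least two choices of $a_\ell$ work.

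Since the argument allows the endpoint to lie anywhere in the closed interval, it also handles cubes that are closed or otherwise not half-open. The slack in $\rho$ (side $2^{\rho(s)}\geq4s$, not just $\geq3s$) is more than enough. The main obstacle is only the bookkeeping that the alternating shifts $(-1)^ka$ really give nested grids; that is the congruence check in the first paragraph.
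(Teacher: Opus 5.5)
Your proof is correct and is essentially the paper's argument: the same alternating-shift grids $2^k([0,1)^{d_i}+z+(-1)^ka)$ and nesting via $3a\in\Z^{d_i}$. Your congruence is the paper's check that consecutive shifts differ by $-3\cdot2^k(-1)^ka\in2^k\Z^{d_i}$. The key observation is also the same: the three endpoint sets partition $(2^k/3)\Z$, so an interval of length $s\le2^{k-2}<2^k/3$ meets at most one of them. Your ``exactly when'' claim is only accurate in one direction for closed intervals, since an endpoint at $x+s$ also obstructs there. The proof uses only the sufficient direction, which is correct, so this does not affect it.
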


\begin{proof}
Use the grids
\[
 \cD^{i,a}
 =\{2^k([0,1)^{d_i}+z+(-1)^ka):
                     k\in\Z,\ z\in\Z^{d_i}\}.
\]
These are dyadic grids, since the shifts at two consecutive scales satisfy
\[
 2^{k+1}(-1)^{k+1}a-2^k(-1)^ka
 =-3\cdot2^k(-1)^ka\in2^k\Z^{d_i}.
\]
Fix $k=\rho(s)$ and $L=2^k$. In one scalar coordinate, the
intervals of length $L$ in the partition with shift
$t\in\{0,1/3,2/3\}$ are
\[
 L([0,1)+z+(-1)^kt),\qquad z\in\Z.
\]
Their endpoints form $L(\Z+(-1)^kt)$. As $t$ varies, these
three endpoint sets are, in some order,
\[
 L\Z,\qquad L\Z+\tfrac L3,\qquad L\Z+\tfrac{2L}3.
\]
They are disjoint and their union is $(L/3)\Z$, so any two
distinct endpoints are at least $L/3$ apart.
Each coordinate projection of $Q$ is an interval of length
$s\leq L/4<L/3$, hence contains at most one of these endpoints
altogether. At least two of the three partitions therefore have
no endpoint on that interval, so it is contained in one of their
intervals of length $L$.
Making this choice in each coordinate gives
$a\in\{0,1/3,2/3\}^{d_i}$ and $z\in\Z^{d_i}$ with
\[
 Q\subset P:=L([0,1)^{d_i}+z+(-1)^ka)\in\cD^{i,a},
 \qquad \ell(P)=L=2^{\rho(s)}.
\]
\end{proof}

Use all $3^d$ product grids
\[
 \cD_\tau=\cD^{1,\tau_1}\times\cdots\times\cD^{m,\tau_m},
 \qquad \tau_i\in\{0,1/3,2/3\}^{d_i}.
\]
The possible rounded side lengths are described by
\begin{equation}\label{eq:rounded-scales}
 \Gamma_\phi
 =\{(\rho(s_1),\ldots,\rho(s_{m-1}),\rho(\phi(s))):
                             s\in(0,\infty)^{m-1}\}.
\end{equation}
Let
\[
 \mathcal E_\phi^\tau
 =\{I\in\cD_\tau:
       (\log_2\ell(I^1),\ldots,\log_2\ell(I^m))\in\Gamma_\phi\}.
\]
Fix $R\in\mathcal R_\phi$ and write
$s_i=\ell(R^i)$ for $i<m$, so $\ell(R^m)=\phi(s)$.
Applying Lemma~\ref{lem:prescribed-cover} in each factor gives
\[
 R^i\subset I^i\in\cD^{i,\tau_i},\qquad
 \ell(I^i)=2^{\rho(\ell(R^i))},\qquad 1\leq i\leq m.
\]
Thus $R\subset I=I^1\times\cdots\times I^m\in\cD_\tau$ and
\[
 (\log_2\ell(I^1),\ldots,\log_2\ell(I^m))
 =(\rho(s_1),\ldots,\rho(s_{m-1}),\rho(\phi(s)))\in\Gamma_\phi.
\]
The prescribed side lengths therefore give
$I\in\mathcal E_\phi^\tau$. Since $2^{\rho(s)}<8s$,
\[
 \frac{|I|}{|R|}
 =\prod_{i=1}^m
       \left(\frac{2^{\rho(\ell(R^i))}}{\ell(R^i)}\right)^{d_i}
 <8^{d_1+\cdots+d_m}=8^d.
\]
Hence
\begin{equation}\label{eq:continuous-domination}
 M_{\mathcal R_\phi}f(x)
 \leq8^d\max_\tau M_{\mathcal E_\phi^\tau}f(x).
\end{equation}

The rounded family $\mathcal E_\phi^\tau$ need not be a dyadic
$\Phi$-Zygmund family: the same first $m-1$ rounded scales can
have several last-coordinate scales. Thus we cannot directly
invoke Rey's dyadic Zygmund reduction. We instead repeat the
selection and slicing argument using the following consequence
of monotonicity of $\phi$:
\begin{equation}\label{eq:rounded-order}
 k,l\in\Gamma_\phi,\quad k_i<l_i\ (1\leq i<m)
 \quad\Longrightarrow\quad k_m\leq l_m.
\end{equation}
To check this, choose $s,t$ giving $k,l$ in
\eqref{eq:rounded-scales}. For every $i<m$, \eqref{eq:rounding-rule}
gives
\[
 s_i\leq2^{k_i-2}\leq2^{l_i-3}<t_i.
\]
Thus $\phi(s)\leq\phi(t)$, and
$k_m=\rho(\phi(s))\leq\rho(\phi(t))=l_m$.

\subsection{Selection and the endpoint estimate}

We next show that the weak $L(\log L)^{m-2}$ estimate holds
uniformly for the families $\mathcal E_\phi^\tau$, as stated in
Theorem~\ref{thm:rounded-endpoint} below.
We use Rey's variational form of the C\'ordoba--Fefferman
selection~\cite{Rey}.

\begin{lemma}[Selection with comparable shadow]\label{lem:selection}
Let $\mathcal F$ be a finite family in a product of $k$ dyadic cube grids.
There is a $\mathcal G\subset\mathcal F$
such that
\begin{equation}\label{eq:selection-lemma}
 |\sh(\mathcal F)|\leq C_k|\sh(\mathcal G)|,
 \qquad
 |I\cap\sh(\mathcal G\setminus\{I\})|\leq\tfrac12|I|
 \quad(I\in\mathcal G).
\end{equation}
The constant depends only on the number $k$ of factors.
\end{lemma}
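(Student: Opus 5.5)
We would run the C\'ordoba--Fefferman greedy selection in its variational form. Order $\mathcal F$ by decreasing measure, breaking ties arbitrarily, and scan it once. Place $I$ into $\mathcal G$ exactly when $|I\cap\sh(\mathcal G_{\mathrm{prev}})|\le\frac12|I|$, where $\mathcal G_{\mathrm{prev}}$ is the set of rectangles selected so far, all of measure $\ge|I|$.

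\emph{Shadow comparability.} A rejected $I$ satisfies $|I\cap\sh(\mathcal G)|>\frac12|I|$, so $I\subset\{M_{\cD}\1_{\sh(\mathcal G)}>\frac12\}$. Here $M_{\cD}$ is the strong maximal function on the product of $k$ grids. The selected rectangles lie in their own shadow, so
\[
\sh(\mathcal F)\subset\{M_{\cD}\1_{\sh(\mathcal G)}>\tfrac12\}.
\]
Chebyshev's inequality together with \eqref{eq:ordinary-maximal} at $p=2$ then gives $|\sh(\mathcal F)|\le4\cdot4^k|\sh(\mathcal G)|$, which proves the first half of \eqref{eq:selection-lemma} with $C_k=4^{k+1}$.

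\emph{Packing condition.} The greedy rule only controls the overlap of $I$ with \emph{earlier} selected rectangles. The condition in \eqref{eq:selection-lemma} also involves later ones, which are smaller, and this is the main obstacle. We see two options.

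\emph{First option.} Modify the rule so that later rectangles are also controlled. The natural attempt is to stop the first pass and instead keep only rectangles whose portion of the final shadow not covered by other selected rectangles exceeds $\frac12|I|$. This becomes a removal iteration: while some $I\in\mathcal G$ has $|I\cap\sh(\mathcal G\setminus\{I\})|>\frac12|I|$, delete it. Termination is automatic because $\mathcal G$ is finite, and the terminal family satisfies the packing condition by construction.

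\emph{Shadow loss in the first option.} The difficulty is controlling how much shadow the deletions lose. I would handle this variationally, which is presumably Rey's point. Among all subfamilies $\mathcal G\subset\mathcal F$, choose one maximizing
\[
\Lambda(\mathcal G)=2|\sh(\mathcal G)|-\sum_{I\in\mathcal G}|I|,
\]
or a similar functional $a|\sh(\mathcal G)|-\sum_{I\in\mathcal G}|I|$ with a suitable $a$. Maximality against removing $I$ gives
\[
a\,|I\setminus\sh(\mathcal G\setminus\{I\})|\ge|I|.
\]
With $a=2$ this says $|I\setminus\sh(\mathcal G\setminus\{I\})|\ge\frac12|I|$, which is exactly the packing condition. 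Maximality against adding any $J\in\mathcal F$ gives $a\,|J\setminus\sh(\mathcal G)|\le|J|$, that is, $|J\cap\sh(\mathcal G)|\ge\frac12|J|$. The maximal-function argument above then yields $|\sh(\mathcal F)|\le C_k|\sh(\mathcal G)|$.

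\emph{Boundary case and fallback.} The only care needed is the non-strict inequality at $\frac12$. Choosing $a$ slightly larger than $2$, say $a=3$, gives $|J\cap\sh(\mathcal G)|\ge\frac23|J|$ while still leaving room for the packing bound. Alternatively, with $a=2$ we obtain $\{M_{\cD}\1_{\sh(\mathcal G)}\ge\frac12\}\supset\sh(\mathcal F)$, and the same $L^2$ Chebyshev estimate still applies. Since $\mathcal F$ is finite, a maximizer exists, so no iteration is needed.
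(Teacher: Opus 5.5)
Your final argument is exactly the paper's proof: maximize $\Lambda(\mathcal G)=2|\sh(\mathcal G)|-\sum_{I\in\mathcal G}|I|$ over subfamilies, read off the packing condition from removal and $|J\cap\sh(\mathcal G)|\geq\frac12|J|$ from addition, then apply Chebyshev on $\{M_{\cD}\1_{\sh(\mathcal G)}\geq\frac12\}$ with the $L^2$ bound to get $C_k=4^{k+1}$; the greedy first pass is unnecessary. Your $a=3$ aside is wrong, since removal-maximality then gives only $|I\cap\sh(\mathcal G\setminus\{I\})|\leq\frac23|I|$, so the packing bound requires $a\leq2$, and the non-strict inequality at $a=2$ causes no difficulty.
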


\begin{proof}
Put
\[
 \Lambda(\mathcal V)=2|\sh(\mathcal V)|-\sum_{I\in\mathcal V}|I|,
 \qquad\mathcal V\subset\mathcal F.
\]
Since $\mathcal F$ has finitely many subfamilies, $\Lambda$ attains
its maximum at some $\mathcal G\subset\mathcal F$.
For $I\in\mathcal G$, comparison with $\mathcal G\setminus\{I\}$ gives
\begin{align*}
 0&\leq\Lambda(\mathcal G)-\Lambda(\mathcal G\setminus\{I\})\\
  &=2\bigl(|\sh(\mathcal G)|-|\sh(\mathcal G\setminus\{I\})|\bigr)-|I|\\
  &=2|I\setminus\sh(\mathcal G\setminus\{I\})|-|I|.
\end{align*}
Thus $|I\setminus\sh(\mathcal G\setminus\{I\})|\geq|I|/2$,
which is the required overlap condition.
For $J\in\mathcal F\setminus\mathcal G$, comparison with
$\mathcal G\cup\{J\}$ gives
\begin{align*}
 0&\geq\Lambda(\mathcal G\cup\{J\})-\Lambda(\mathcal G)\\
  &=2\bigl(|\sh(\mathcal G\cup\{J\})|-|\sh(\mathcal G)|\bigr)-|J|\\
  &=2|J\setminus\sh(\mathcal G)|-|J|.
\end{align*}
Consequently,
\[
 |J\cap\sh(\mathcal G)|
 =|J|-|J\setminus\sh(\mathcal G)|\geq\tfrac12|J|.
\]
For $J\in\mathcal G$, we have
$\langle\1_{\sh(\mathcal G)}\rangle_J=1$.
Hence $\langle\1_{\sh(\mathcal G)}\rangle_J\geq1/2$ for every
$J\in\mathcal F$, so
\[
 \sh(\mathcal F)\subset
 \{M_{\cD}\1_{\sh(\mathcal G)}\geq\tfrac12\},
 \qquad
 |\sh(\mathcal F)|
 \leq4\|M_{\cD}\1_{\sh(\mathcal G)}\|_2^2
 \leq4^{k+1}|\sh(\mathcal G)|.
\]
Here $\cD$ is the full product of the $k$ grids and the last step
is \eqref{eq:ordinary-maximal}.  Thus one may take
$C_k=4^{k+1}$.
\end{proof}

\begin{theorem}\label{thm:rounded-endpoint}
Let $m\geq3$ and let
$\phi\colon(0,\infty)^{m-1}\to(0,\infty)$ be coordinatewise
nondecreasing. For each of the families $\mathcal E_\phi^\tau$
defined above and every $\lambda>0$,
\begin{equation}\label{eq:rounded-endpoint}
 |\{M_{\mathcal E_\phi^\tau}f>\lambda\}|
 \leq C_{m,d}\int_{\R^d}\frac{|f|}{\lambda}
       \left[\log\!\left(e+\frac{|f|}{\lambda}\right)\right]^{m-2}.
\end{equation}
The constant is independent of $\phi$ and $\tau$.
\end{theorem}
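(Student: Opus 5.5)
We follow Rey's selection and slicing argument, using Theorem~\ref{thm:weaker-containment} in $m-1$ parameters on the slices. By monotone convergence it suffices to bound $|\sh(\mathcal F)|$ for finite $\mathcal F\subset\mathcal E_\phi^\tau$ with $\langle|f|\rangle_I>\lambda$ for every $I\in\mathcal F$. By homogeneity we may take $\lambda=1$ and $f\geq0$.

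First, apply Lemma~\ref{lem:selection} with $k=m$ to get $\mathcal G\subset\mathcal F$ with $|\sh(\mathcal F)|\leq C_m|\sh(\mathcal G)|$ and $|I\cap\sh(\mathcal G\setminus\{I\})|\leq\frac12|I|$. The second property gives two things. The family $\mathcal G$ is $\frac12$-sparse, with $E_I=I\setminus\sh(\mathcal G\setminus\{I\})$. It also has no repeated or nested members: if $I\subsetneq J$ with both in $\mathcal G$, then $I\subset\sh(\mathcal G\setminus\{I\})$, a contradiction. Then
\[
|\sh(\mathcal G)|<\sum_{I\in\mathcal G}\int_I f=\int f\,h_{\mathcal G}.
\]

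Next we slice. For $u\in\R^{d_m}$, let $\mathcal G_u=\{I^{A_m}:I\in\mathcal G,\ u\in I^m\}$, an $(m-1)$-parameter dyadic family with $h_{\mathcal G}(x,u)=h_{\mathcal G_u}(x)$. Distinct $I,J\in\mathcal G$ containing $u$ in the last coordinate could share the same $I^{A_m}$; I would keep $\mathcal G_u$ as a multiset, or better, show this cannot happen.

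We need three properties of the slices.

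\emph{Sparseness.} Uniform sparseness of $\mathcal G_u$ holds for almost every $u$ only in an averaged sense, so I would instead follow Rey: define $E_I$ slicewise. Since all last-coordinate cubes containing $u$ are nested, the slice sets $(E_I)_u$ are disjoint, but $|(E_I)_u|\geq\frac12|I^{A_m}|$ need not hold for each $u$. The remedy is to refine the selection, choosing $\mathcal G$ to also maximize within slices. More simply, apply the duality argument of Theorem~\ref{thm:overlap} directly: $\int h_{\mathcal G_u}g\leq 2\int_{\sh}M g$ requires only $\sum_I|I^{A_m}|\langle g\rangle\lesssim\int_{E}$. This is the main obstacle, and I would first try to prove $\frac12$-sparseness of slices using that, for nested last-coordinate cubes, $E_I$ can be replaced by $I^{A_m}\times I^m$ minus the shadow of members with larger or equal last cube.

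\emph{Weak containment.} Suppose $I^{A_m}\subset J^{A_m}$ with strict inclusion in every coordinate $i<m$, and $u\in I^m\cap J^m$. Then the scale tuples satisfy $k_i<l_i$ for $i<m$, so \eqref{eq:rounded-order} gives $k_m\leq l_m$, hence $I^m\subset J^m$ and $I\subsetneq J$, which is excluded. Thus $\mathcal G_u$ satisfies \eqref{eq:weaker-containment} with $m-1$ parameters, and this needs $m-1\geq2$, i.e.\ $m\geq3$. The same reasoning excludes equal slices with different last cubes only when some coordinate is equal, which is allowed.

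\emph{Conclusion.} Theorem~\ref{thm:weaker-containment} then gives
\[
\int\bigl[\exp\bigl(c\,h_{\mathcal G_u}^{1/(m-2)}\bigr)-1\bigr]\,dx\leq C|\sh(\mathcal G_u)|.
\]
Integrating in $u$ yields $\int[\exp(c\,h_{\mathcal G}^{1/(m-2)})-1]\leq C|\sh(\mathcal G)|$. Young's inequality for the complementary pair $\Theta(t)=e^{t^{1/(m-2)}}-1$ and $t(\log(e+t))^{m-2}$ gives
\[
\int f h_{\mathcal G}\leq \varepsilon^{-1}C\int f(\log(e+f))^{m-2}+\frac12|\sh(\mathcal G)|
\]
after scaling $h$ by a small constant. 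Absorbing the last term completes the proof.
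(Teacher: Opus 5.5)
Your outline follows the paper: selection via Lemma~\ref{lem:selection}, slicing in the last coordinate, the weaker containment property of the slices from \eqref{eq:rounded-order}, Theorem~\ref{thm:weaker-containment} with $m-1$ parameters, integration in $u$, and a Young-type inequality with absorption. The weak-containment step and the Young/absorption step are fine. The gap is the uniform sparseness of the slices $\mathcal G_u$. You call it the main obstacle and leave it open. Your assertion that it holds ``only in an averaged sense'' is incorrect. The alternatives you float do not avoid it. Re-selecting within slices is not needed. Running the duality argument ``directly'' still requires, for each fixed $u$, disjoint sets inside the $I^{A_m}$ of proportional measure, since that is what bounds $\sum_I|I^{A_m}|\langle g\rangle_{I^{A_m}}$ by an integral of a maximal function. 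Without this, Theorem~\ref{thm:weaker-containment} cannot be applied to $\mathcal G_u$. As written, the proof is therefore incomplete at exactly the step that transfers the selection property to the slices.

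The last idea you mention does close the gap, for every $u$ and with $\eta=1/2$; it is essentially the paper's argument. For $I\in\mathcal G$ with $u\in I^m$, put
\[
 E_I^u=I^{A_m}\setminus\bigcup\{J^{A_m}:J\in\mathcal G\setminus\{I\},\ u\in J^m,\ \ell(J^m)\geq\ell(I^m)\}.
\]
Each such $J$ has $J^m\supset I^m$ by dyadicity, so $(I^{A_m}\cap J^{A_m})\times I^m=I\cap J$. Hence $(I^{A_m}\setminus E_I^u)\times I^m\subset I\cap\sh(\mathcal G\setminus\{I\})$, and the selection property gives $|E_I^u|\geq\frac12|I^{A_m}|$. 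For distinct $I,J$ in the slice, let $J$ be the one with the larger-or-equal last cube. Then $E_I^u\cap J^{A_m}=\varnothing$, while $E_J^u\subset J^{A_m}$, so the sets are disjoint. The paper instead orders by decreasing $\ell(I^m)$ and removes only earlier members; both versions work.

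The same nesting settles your worry about repeated slices. If $I^{A_m}=J^{A_m}$ and $u\in I^m\cap J^m$, then $I\subset J$ or $J\subset I$, which incomparability of $\mathcal G$ forbids unless $I=J$. So no multiset is needed.
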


\begin{proof}
Fix $\tau$. Since $M_{\mathcal E_\phi^\tau}f
=M_{\mathcal E_\phi^\tau}|f|$, we may take $f\geq0$.
Homogeneity reduces the estimate to $\lambda=1$.
We may assume the integral on the right of
\eqref{eq:rounded-endpoint} is finite; in particular, $f\in L^1$.
Fix a finite family
\[
 \mathcal F\subset\{I\in\mathcal E_\phi^\tau:\langle f\rangle_I>1\},
 \qquad
 \Omega=\sh(\mathcal F).
\]
Lemma~\ref{lem:selection} gives $\mathcal G\subset\mathcal F$ with
\begin{equation}\label{eq:selection}
 |\Omega|\leq C_1|\sh(\mathcal G)|,
 \qquad
 |I\cap\sh(\mathcal G\setminus\{I\})|\leq\tfrac12|I|
 \quad(I\in\mathcal G),
\end{equation}
where $C_1$ depends only on $m$. Since $\langle f\rangle_I>1$
for $I\in\mathcal G$,
\begin{equation}\label{eq:levelset-pairing}
 |\Omega|\leq C_1\sum_{I\in\mathcal G}|I|
       \leq C_1\int_{\sh(\mathcal G)}f h_{\mathcal G}.
\end{equation}
We next prove exponential integrability of $h_{\mathcal G}^{1/(m-2)}$
and then use it to estimate the last integral.

First, \eqref{eq:selection} makes $\mathcal G$ incomparable:
having distinct $I,J\in\mathcal G$ with $I\subset J$ would imply
\[
 |I\cap\sh(\mathcal G\setminus\{I\})|=|I|>\tfrac12|I|.
\]
Fix $t\in\R^{d_m}$ and define the sliced family
\[
 \mathcal G_t=\{I^1\times\cdots\times I^{m-1}:
                      I\in\mathcal G,\ t\in I^m\}.
\]
No two distinct rectangles give the same member of $\mathcal G_t$.
Indeed, for $I,J\in\mathcal G$,
\[
\begin{gathered}
 I^i=J^i\ (i<m),\qquad t\in I^m\cap J^m\\
 \Longrightarrow\quad I^m\subset J^m\ \text{or}\ J^m\subset I^m\\
 \Longrightarrow\quad I\subset J\ \text{or}\ J\subset I
 \quad\Longrightarrow\quad I=J.
\end{gathered}
\]
The first implication uses dyadicity and the last incomparability.
Consequently, for $x\in\R^{d_1}\times\cdots\times\R^{d_{m-1}}$,
\begin{equation}\label{eq:sliced-overlap}
 h_{\mathcal G}(x,t)
 =\sum_{\substack{I\in\mathcal G\\t\in I^m}}
       \prod_{i=1}^{m-1}\1_{I^i}(x_i)
 =h_{\mathcal G_t}(x).
\end{equation}

In Rey's dyadic $\Phi$-Zygmund setting, the slices are incomparable:
if $I^i\subset J^i$ for every $i<m$, monotonicity of $\Phi$
gives $\ell(I^m)\leq\ell(J^m)$. The common point $t$ then gives
$I^m\subset J^m$, contradicting incomparability of $\mathcal G$
when $I\ne J$. For our rounded family $\mathcal E_\phi^\tau$,
\eqref{eq:rounded-order} gives this implication only when
$I^i\subsetneq J^i$ for every $i<m$. Thus the slices need not be
incomparable; instead, they satisfy \eqref{eq:weaker-containment}
in $m-1$ coordinates, which is why we use
Theorem~\ref{thm:weaker-containment}.
Suppose, to the contrary, that $I,J\in\mathcal G$ have
$t\in I^m\cap J^m$ and
$I^i\subsetneq J^i$ for every $i<m$.
Put $k_i=\log_2\ell(I^i)$ and $l_i=\log_2\ell(J^i)$.
Then $k,l\in\Gamma_\phi$ and $k_i<l_i$ for $i<m$, so
\eqref{eq:rounded-order} gives
\[
 k_m\leq l_m
 \quad\Longrightarrow\quad\ell(I^m)\leq\ell(J^m)
 \quad\Longrightarrow\quad I^m\subset J^m.
\]
The last implication uses dyadicity and the common point $t$.
Thus $I\subset J$, contradicting incomparability of $\mathcal G$.

The rest of the proof follows Rey's argument, using
Theorem~\ref{thm:weaker-containment} in place of the exponential
estimate for incomparable slices.

To prove uniform sparseness of the slices, list the rectangles $I\in\mathcal G$ with
$t\in I^m$ as
$I_j=R_j\times I_j^m$, $1\leq j\leq N$, with
$\ell(I_1^m)\geq\cdots\geq\ell(I_N^m)$.
Set $E_j=R_j\setminus\bigcup_{i<j}R_i$.  The sets $E_j$ are
pairwise disjoint.  If $i<j$, the last cubes both contain $t$,
so $I_j^m\subset I_i^m$.  Therefore
\begin{align*}
 (R_j\setminus E_j)\times I_j^m
 &=\bigcup_{i<j}\bigl((R_j\cap R_i)\times I_j^m\bigr)\\
 &=\bigcup_{i<j}(I_j\cap I_i)
 \subset I_j\cap\sh(\mathcal G\setminus\{I_j\}).
\end{align*}
Taking measures and using \eqref{eq:selection},
\[
 (|R_j|-|E_j|)|I_j^m|
 \leq\tfrac12|I_j|
 =\tfrac12|R_j|\,|I_j^m|.
\]
Dividing by $|I_j^m|$ gives $|E_j|\geq|R_j|/2$.
Thus $\mathcal G_t$ is $1/2$-sparse.

Apply the sparse conclusion of Theorem~\ref{thm:weaker-containment}
with $m-1$ parameters to each slice. Since
\[
 \{x:(x,t)\in\sh(\mathcal G)\}=\sh(\mathcal G_t),
\]
Fubini's theorem and \eqref{eq:sliced-overlap} give, for
$c,C_2>0$ depending only on $m$ and $d$,
\begin{equation}\label{eq:selected-exponential}
\begin{aligned}
 \int_{\sh(\mathcal G)}e^{c h_{\mathcal G}^{1/(m-2)}}
 &=\int_{\R^{d_m}}\int_{\sh(\mathcal G_t)}
           e^{c h_{\mathcal G_t}(x)^{1/(m-2)}}\ud x\ud t\\
 &\leq C_2\int_{\R^{d_m}}|\sh(\mathcal G_t)|\ud t
 =C_2|\sh(\mathcal G)|.
\end{aligned}
\end{equation}

For these fixed constants there is $C_3$ such that
\begin{equation}\label{eq:young}
 C_1ab\leq C_3a\bigl(\log(e+a)\bigr)^{m-2}
          +\frac1{2C_2}e^{c b^{1/(m-2)}},\qquad a,b\geq0.
\end{equation}
To verify this, choose $K$ so large that $K^{-1/(m-2)}\leq c/2$ and
$2C_1C_2b\leq e^{(c/2)b^{1/(m-2)}}$ for $b>K$.
If $b\leq K(\log(e+a))^{m-2}$, then
$C_1ab\leq C_1K a(\log(e+a))^{m-2}$.
Otherwise $a\leq\exp((b/K)^{1/(m-2)})$ and $b>K$, so
\begin{align*}
 C_1ab&\leq C_1b\exp((b/K)^{1/(m-2)})
 \leq C_1b\exp\!\left(\tfrac c2 b^{1/(m-2)}\right)\\
 &\leq(2C_2)^{-1}\exp(c b^{1/(m-2)}).
\end{align*}
This proves \eqref{eq:young} with $C_3=C_1K$.
Apply it with $a=f$, $b=h_{\mathcal G}$ in
\eqref{eq:levelset-pairing}, and use \eqref{eq:selected-exponential}:
\[
\begin{aligned}
 |\Omega|
 &\leq C_3\int_{\R^d}f\bigl(\log(e+f)\bigr)^{m-2}
       +\frac1{2C_2}\int_{\sh(\mathcal G)}
                         e^{c h_{\mathcal G}^{1/(m-2)}}\\
 &\leq C_3\int_{\R^d}f\bigl(\log(e+f)\bigr)^{m-2}
       +\tfrac12|\sh(\mathcal G)|\\
 &\leq C_3\int_{\R^d}f\bigl(\log(e+f)\bigr)^{m-2}
       +\tfrac12|\Omega|.
\end{aligned}
\]
Moving the last term to the left gives
\[
 |\sh(\mathcal F)|
 \leq2C_3\int_{\R^d}f\bigl(\log(e+f)\bigr)^{m-2}.
\]
Choose increasing finite families $\mathcal F_N$ with
\[
 \bigcup_N\mathcal F_N
 =\{I\in\mathcal E_\phi^\tau:\langle f\rangle_I>1\}.
\]
Then
\[
\begin{aligned}
 \{M_{\mathcal E_\phi^\tau}f>1\}
 &=\bigcup_N\sh(\mathcal F_N),\\
 |\{M_{\mathcal E_\phi^\tau}f>1\}|
 &=\lim_{N\to\infty}|\sh(\mathcal F_N)|.
\end{aligned}
\]
This proves \eqref{eq:rounded-endpoint}.
\end{proof}

\begin{proof}[Proof of Theorem~\ref{thm:continuous-zygmund}]
There are $3^d$ choices of $\tau$. By
\eqref{eq:continuous-domination} and \eqref{eq:rounded-endpoint},
\begin{align*}
 |\{M_{\mathcal R_\phi}f>\lambda\}|
 &\leq\sum_\tau|\{M_{\mathcal E_\phi^\tau}f>\lambda/8^d\}|\\
 &\leq C_{m,d}\int_{\R^d}\frac{8^d|f|}{\lambda}
       \left[\log\!\left(e+\frac{8^d|f|}{\lambda}\right)\right]^{m-2}\\
 &\leq C_{m,d}\int_{\R^d}\frac{|f|}{\lambda}
       \left[\log\!\left(e+\frac{|f|}{\lambda}\right)\right]^{m-2}.
\end{align*}
The last step uses
$\log(e+8^du)\leq(1+d\log8)\log(e+u)$ for $u\geq0$.
This proves \eqref{eq:continuous-zygmund}.
\end{proof}

The dyadic version of \eqref{eq:zygmund} follows as a special case.
For a coordinatewise nondecreasing $\Phi\colon\Z^{m-1}\to\Z$, set
\[
 \phi(s_1,\ldots,s_{m-1})
 =2^{\Phi(\lfloor\log_2s_1\rfloor,\ldots,
                \lfloor\log_2s_{m-1}\rfloor)}.
\]
This function is positive and coordinatewise nondecreasing, and
$\phi(2^{k_1},\ldots,2^{k_{m-1}})=2^{\Phi(k_1,\ldots,k_{m-1})}$.
Thus $\mathcal Z_\Phi\subset\mathcal R_\phi$, so
$M_{\mathcal Z_\Phi}f\leq M_{\mathcal R_\phi}f$.

\section{Sparse \texorpdfstring{$\Phi$}{Phi}-Zygmund families}\label{sec:phi-sparse}

The Zygmund estimate also gives an overlap bound for sparse
$\Phi$-Zygmund families, without assuming incomparability.

\begin{corollary}\label{cor:phi-sparse}
Let $m\geq3$, let $\Phi\colon\Z^{m-1}\to\Z$ be coordinatewise
nondecreasing, and let $\mathcal G\subset\mathcal Z_\Phi$ be
$\eta$-sparse with finite shadow, where $0<\eta\leq1$. Then
\begin{equation}\label{eq:phi-sparse-exponential}
 \int_{\sh(\mathcal G)}
       \bigl[\exp(c h_{\mathcal G}^{1/(m-1)})-1\bigr]
 \leq C|\sh(\mathcal G)|,
\end{equation}
where $c,C>0$ depend only on $m$, $d$, and $\eta$, not on $\Phi$
or the dyadic grids.
\end{corollary}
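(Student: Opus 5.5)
The plan is to pass from the weak $L(\log L)^{m-2}$ estimate for $M_{\mathcal Z_\Phi}$ to $L^q$ moment bounds for $h_{\mathcal G}$ of size $q^{m-1}$, and then convert these into exponential integrability exactly as in the proof of Theorem~\ref{thm:overlap}. The dyadic Zygmund estimate follows from Theorem~\ref{thm:continuous-zygmund} via $\mathcal Z_\Phi\subset\mathcal R_\phi$, with a constant independent of $\Phi$ and the grids.

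First I will turn the weak-type endpoint into strong $L^p$ bounds with the correct growth. Since $M_{\mathcal Z_\Phi}$ is bounded on $L^\infty$ with norm $1$, I apply the standard Marcinkiewicz-type argument for Orlicz endpoints. For $\lambda>0$, split $f=f\1_{\{|f|>\lambda/2\}}+f\1_{\{|f|\leq\lambda/2\}}$. Then
\[
 |\{M_{\mathcal Z_\Phi}f>\lambda\}|
 \leq C\int_{\{|f|>\lambda/2\}}\frac{|f|}{\lambda}
 \Bigl[\log\Bigl(e+\frac{|f|}{\lambda}\Bigr)\Bigr]^{m-2}.
\]
Integrating against $p\lambda^{p-1}\ud\lambda$ and using Fubini, the inner integral is
\[
 \int_0^{2|f|}\lambda^{p-2}\bigl[\log(e+|f|/\lambda)\bigr]^{m-2}\ud\lambda .
\]
The substitution $\lambda=|f|e^{-s}$ turns this into $|f|^{p-1}\int e^{-(p-1)s}(C+s)^{m-2}\ud s\lesssim|f|^{p-1}(p-1)^{-(m-1)}$. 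This gives
\[
 \|M_{\mathcal Z_\Phi}f\|_p\leq C_{m,d}(p')^{(m-1)/p}\|f\|_p\lesssim(p')^{m-1}\|f\|_p,\qquad 1<p\leq2 .
\]

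Next I run the duality argument of Theorem~\ref{thm:overlap}, with $M_{\mathcal Z_\Phi}$ in place of $M_{\mathcal G}$. Since $M_{\mathcal G}\leq M_{\mathcal Z_\Phi}$ for $\mathcal G\subset\mathcal Z_\Phi$, and since sparseness gives disjoint $E_I\subset I$, for $g\geq0$
\[
 \int h_{\mathcal G}g\leq\eta^{-1}\int_{\sh(\mathcal G)}M_{\mathcal Z_\Phi}g\leq\eta^{-1}|\sh(\mathcal G)|^{1/q}\|M_{\mathcal Z_\Phi}g\|_{q'} .
\]
With $p=q'$ and $p'=q$, this yields $\|h_{\mathcal G}\|_q\leq C\eta^{-1}q^{m-1}|\sh(\mathcal G)|^{1/q}$ for $q\geq2$. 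I first treat finite $\mathcal G$ and then pass to the limit by monotone convergence. The Chebyshev argument of Theorem~\ref{thm:overlap}, with $q\sim t^{1/(m-1)}$, followed by the layer-cake integration, then gives \eqref{eq:phi-sparse-exponential}. The constants depend on $m$, $d$ and $\eta$, the last through $\eta h_{\mathcal G}$, or equivalently by absorbing $\eta$ into $c$.

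The only real computation is the extrapolation step, where I must track that the $(m-2)$-fold logarithm produces exactly $(p-1)^{-(m-1)}$, uniformly for $1<p\leq2$. The integral $\int_0^\infty e^{-(p-1)s}(C+s)^{m-2}\ud s\lesssim(p-1)^{-(m-1)}$ settles this. I also have to check that the $\log(e+\cdot)$ in the endpoint estimate, evaluated at $2|f|$ versus $|f|$, only costs constants; this follows as in the proof of Theorem~\ref{thm:continuous-zygmund}. Everything else is a verbatim reuse of the proof of Theorem~\ref{thm:overlap}.
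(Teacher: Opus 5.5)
Your proof is correct. The first step matches the paper's: Theorem~\ref{thm:continuous-zygmund} and the inclusion $\mathcal Z_\Phi\subset\mathcal R_\phi$ give the weak $L(\log L)^{m-2}$ bound for $M_{\mathcal Z_\Phi}$, uniformly in $\Phi$ and the grids.

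The second step is where you differ. The paper does not prove the passage from this endpoint estimate to the overlap bound. It cites Rey's Theorem~1.1(a) with $k=m-2$: a weak $L(\log L)^k$ bound for the maximal operator of a basis gives $\exp L^{1/(k+1)}$ integrability of the overlap of its sparse families. You prove this implication directly. Splitting $f=f\1_{\{|f|>\lambda/2\}}+f\1_{\{|f|\leq\lambda/2\}}$ and integrating the level sets gives $\|M_{\mathcal Z_\Phi}f\|_p^p\lesssim(p-1)^{-(m-1)}\|f\|_p^p$, and hence $\|M_{\mathcal Z_\Phi}f\|_p\lesssim(p')^{m-1}\|f\|_p$ for $1<p\leq2$. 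From there the duality and Chebyshev argument of Theorem~\ref{thm:overlap} applies verbatim, since $M_{\mathcal G}\leq M_{\mathcal Z_\Phi}$ for $\mathcal G\subset\mathcal Z_\Phi$. In the substitution $\lambda=|f|e^{-s}$, $s$ actually ranges over $[-\log2,\infty)$, but $\log(e+e^s)\leq C+s$ still holds there, so your bound $(p-1)^{-(m-1)}$ is unaffected.

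The paper's version is shorter and rests on Rey's general basis-level theorem. Yours makes the corollary self-contained within the paper. It also shows that the corollary runs on the same mechanism as the incomparable case: an $L^p$ maximal bound with growth $(p')^{m-1}$, the same growth Theorem~\ref{thm:maximal} gives for incomparable families. This makes explicit, already at the level of $L^p$ bounds, the parallel noted in the introduction between the $\Phi$-Zygmund scale restriction and incomparability.
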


\begin{proof}
The last paragraph of Section~\ref{sec:continuous-zygmund} and
Theorem~\ref{thm:continuous-zygmund} give the weak
$L(\log L)^{m-2}$ estimate for $M_{\mathcal Z_\Phi}$.
The maximal-function-to-overlap implication in
Rey~\cite[Theorem~1.1(a)]{Rey}, with $k=m-2$, gives
\eqref{eq:phi-sparse-exponential}.
\end{proof}

Thus sparseness with the $\Phi$-Zygmund scale restriction gives the
same exponential power as sparseness with incomparability in
Theorem~\ref{thm:overlap}. Adding incomparability to a sparse
$\Phi$-Zygmund family need not improve this power, even for the
classical scale relation.

\begin{theorem}[Sharpness under incomparability]\label{thm:phi-sparse-sharpness}
Let $m\geq3$, $0<\eta<1$, and set
\[
 \Phi(k_1,\ldots,k_{m-1})=k_1+\cdots+k_{m-1}.
\]
There are finite incomparable $\eta$-sparse families
$\mathcal G_N\subset\mathcal Z_\Phi$, $N\geq2$, with
$|\sh(\mathcal G_N)|=1$, such that
\[
 \lim_{N\to\infty}
 \int_{\sh(\mathcal G_N)}
       \bigl[\exp(c h_{\mathcal G_N}^{\beta})-1\bigr]
 =\infty
 \qquad\text{for every }c>0,\quad\beta>\frac1{m-1}.
\]
\end{theorem}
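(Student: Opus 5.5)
The plan is to build explicit examples modeled on Rey's simplex examples~\cite[Section~3.2.2]{Rey}, adapted to the relation $\Phi(k)=k_1+\cdots+k_{m-1}$. By Chebyshev's inequality, the integral $\int_{\sh(\mathcal G_N)}[\exp(ch_{\mathcal G_N}^\beta)-1]$ diverges for every $c>0$ and $\beta>1/(m-1)$ provided the following holds. With $|\sh(\mathcal G_N)|=1$, there should be a set $U_N\subset\sh(\mathcal G_N)$ on which
\[
 h_{\mathcal G_N}\gtrsim N^{m-1},\qquad |U_N|\geq e^{-C N}.
\]
Indeed, then the integral is at least $e^{cN^{(m-1)\beta}/C'-CN}$, which tends to infinity because $(m-1)\beta>1$. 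So everything reduces to constructing incomparable, $\eta$-sparse families in $\mathcal Z_\Phi$ whose overlap reaches the $(m-1)$st power of the logarithm of the relative measure. Incomparability costs one power relative to the sparse-only $1/m$ example, and the point is that it costs no more here.

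\textbf{Step 1: the basic pieces.} Rectangles in $\mathcal Z_\Phi$ near the origin have the form
\[
 \prod_{i<m}[0,2^{-k_i})\times J,\qquad \ell(J)=2^{-|k|},\quad |k|=k_1+\cdots+k_{m-1}.
\]
A single simplex layer $\{k\in\Z_{\geq0}^{m-1}:|k|=n\}$, all with $J=[0,2^{-n})$, is incomparable, since its members share the last factor and their $k$'s form an antichain. Counting compositions, its overlap is $\sim s^{m-2}$ on a set of relative measure $\sim s^{m-2}2^{-s}$. On its own this gives only the power $1/(m-2)$.

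\textbf{Step 2: stacking layers.} I will stack the layers $n=1,\ldots,N$ so that their overlaps add at a common point, giving $\sum_{n\leq N}n^{m-2}\sim N^{m-1}$. The last factors should nest, $J_n\subset J_{n'}$ for $n\geq n'$, so that a common point lies in all of them. Cross-layer containment must then be destroyed in one of the first $m-1$ coordinates.

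My first attempt is to translate the layers dyadically in the first coordinate. Layer $n$ will use first factors inside a fixed dyadic cube $Q_n$, and only those $k$ with $k_1$ at least of order $n$, so that these cubes still overlap. The choice is arranged so that whenever $k\leq l$ coordinatewise with $k$ in a deeper layer, the first factor of $I_k$ is not contained in that of $I_l$. The parameters must satisfy this while keeping a common region of relative measure about $2^{-O(N)}$ on which about $N^{m-1}$ rectangles overlap.

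\textbf{Step 3: sparseness and normalization.} For arbitrary $0<\eta<1$, I will space the layers lacunarily, using only every $M$th scale with $M=M(\eta)$, which changes the counts only by constants. The sets $E_I$ will be chosen greedily from the smallest rectangles upward, with $E_I$ equal to $I$ minus the union of strictly smaller selected rectangles meeting it. Geometric decay of the measures of these smaller rectangles then gives $|E_I|\geq\eta|I|$. Finally I will rescale so that $|\sh(\mathcal G_N)|=1$. The relation $\Phi$ is preserved by taking dyadic dilations that scale the last coordinate by the product of the other factors, and the grids can be chosen accordingly.

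\textbf{Main obstacle.} The hard step is Step~2. Overlap at a common point forces the $k$'s of overlapping members to avoid the coordinatewise order wherever their factors share a point, so the construction must exploit that disjointness in a single coordinate suffices. I expect the delicate part to be the bookkeeping showing that the translated layers keep an $N^{m-1}$ overlap on a set of measure $e^{-O(N)}$.
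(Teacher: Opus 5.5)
\textbf{Gap in Step 2.} This is a structural gap, not a bookkeeping problem: stacking the simplex layers cannot give overlap of order $N^{m-1}$ in an incomparable family.

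Suppose two rectangles of $\mathcal Z_\Phi$ contain a common point $x$, and the first is no larger than the second in every coordinate. Then each factor of the first is a dyadic cube containing $x_i$, so it lies in the corresponding factor of the second, and the rectangles are nested. In your setup the last side length is $2^{-|k|}$, so comparability of the full side-length tuples is the same as coordinatewise comparability of the exponent vectors $k$. Hence, in an incomparable family, the members through a given point have pairwise distinct, pairwise incomparable vectors $k$.

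Your layers use only $k\in\Z_{\geq0}^{m-1}$ with $|k|\leq N$. Translations and a global $\Phi$-preserving dilation do not change this order structure. This set is covered by the $\binom{N+m-2}{m-2}$ chains $\{(t,k'):0\leq t\leq N-|k'|\}$, $k'\in\Z_{\geq0}^{m-2}$. So every antichain in it has at most $\binom{N+m-2}{m-2}\sim N^{m-2}$ elements, and $h_{\mathcal G_N}\lesssim N^{m-2}$ everywhere, whatever translations you choose. That only gives divergence for $\beta>1/(m-2)$, which is the single-layer power, not $\beta>1/(m-1)$.

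The translation device is in fact self-defeating. Making the first factor of the smaller rectangle not contained in that of the larger forces these two dyadic cubes to be disjoint. Such a pair then never contributes to the overlap at a common point.

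\textbf{The missing idea.} You need to decouple the size of the rectangles from the measure of the high-overlap set. An antichain of $\sim N^{m-1}$ exponent vectors must spread over ranges much larger than $N$ in some coordinates; in the paper they are of order $N^2$ in the first two. So the rectangles through a point of maximal overlap are extremely thin. The high-overlap set must therefore be assembled from many such points, not taken as a single corner intersection as in your picture.

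The paper does this as follows.
\begin{itemize}
\item[(1)] It indexes by the full box $a\in\{0,\ldots,N-1\}^{m-1}$ rather than by simplex layers. The exponents are $n_2(a)=s(2Na_2+2a_m)$, $n_i(a)=sa_i$ for $3\leq i\leq m-1$, and $n_m(a)=s(L+a_m)$. These increase in $a_i$, so the $\Phi$-determined last coordinate is itself a depth coordinate.
\item[(2)] The first exponent $n_1(a)=s(L-2Na_2-\sum_{i\geq3}a_i)$ decreases in every index. This gives both $n_m=n_1+\cdots+n_{m-1}$ and incomparability.
\item[(3)] In each coordinate $i\geq2$ it builds Cantor-type nested sets $A^i_k$ with $|A^i_k|=\delta^k$, each a union of many small dyadic cubes. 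It takes $\mathcal G(a)$ to be the full partition of $Q^1\times\prod_{i\geq2}A^i_{a_i}$ into dyadic rectangles with side lengths $2^{-n_i(a)}$.
\item[(4)] The overlap then factors as $h_{\mathcal G_N}(x)=\1_{Q^1}(x_1)\prod_{i\geq2}\sum_{k<N}\1_{A^i_k}(x_i)$. It equals $N^{m-1}$ on a set of measure $\delta^{(m-1)(N-1)}$.
\item[(5)] Sparseness comes directly from $E_I=I^1\times\prod_{i\geq2}(I^i\setminus A^i_{a_i+1})$ with $(1-\delta)^{m-1}\geq\eta$, rather than from lacunary layers and a greedy choice.
\end{itemize}
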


\begin{proof}
Fix $N\geq2$. We choose $N^{m-1}$ side-length tuples so that, between
any two distinct tuples, one side length is larger and another is smaller.
Rectangles with different tuples therefore cannot contain one another.
For each tuple, we choose a disjoint collection of rectangles, so their
union is incomparable. It will also be sparse and have overlap $N^{m-1}$
on a set of measure at least $e^{-CN}$, where $C$ is independent of $N$.
This suffices for the failure of every larger exponential power, since
for $c>0$ and $\beta>1/(m-1)$,
\[
 e^{-CN}\bigl[\exp(cN^{(m-1)\beta})-1\bigr]
 =\exp\bigl(cN^{(m-1)\beta}-CN\bigr)-e^{-CN}
 \longrightarrow\infty
 \qquad(N\to\infty).
\]

Choose unit cubes $Q^i\in\cD^i$, and put
$\Omega=Q^1\times\cdots\times Q^m$. Fix an integer $s\geq1$ such that
\[
 (1-2^{-s})^{m-1}\geq\eta.
\]
Set $\delta=2^{-s}$ and $L=(2N+m-2)(N-1)$.

\emph{The side lengths.}
For $a=(a_2,\ldots,a_m)\in\{0,\ldots,N-1\}^{m-1}$, we use side lengths
$2^{-n_i(a)}$, where
\begin{equation}\label{eq:sharp-generations}
\begin{aligned}
 n_1(a)&=s\left(L-2Na_2-\sum_{i=3}^{m}a_i\right),&
 n_2(a)&=s(2Na_2+2a_m),\\
 n_i(a)&=sa_i\quad(3\leq i\leq m-1),&
 n_m(a)&=s(L+a_m).
\end{aligned}
\end{equation}
These exponents enforce the Zygmund scale relation and incomparability;
the separation between successive $a_2$-levels also accommodates the
variation in $a_m$ in the sparse construction below.
All $n_i(a)$ are nonnegative, since
\[
 2Na_2+\sum_{i=3}^{m}a_i
 \leq 2N(N-1)+(m-2)(N-1)=L.
\]
Moreover,
\[
\begin{aligned}
 \sum_{i=1}^{m-1}n_i(a)
 &=s\left(L-2Na_2-\sum_{i=3}^{m}a_i
       +2Na_2+2a_m+\sum_{i=3}^{m-1}a_i\right)\\
 &=s(L+a_m)=n_m(a).
\end{aligned}
\]
Consequently,
\[
 2^{-n_m(a)}=\prod_{i=1}^{m-1}2^{-n_i(a)}.
\]
Thus dyadic rectangles with these side lengths belong to $\mathcal Z_\Phi$.

Suppose $n_i(a)\geq n_i(a')$ for every $i$. Coordinates
$3,\ldots,m$ give $a_i\geq a'_i$ for $3\leq i\leq m$.
If $a_2<a'_2$, then
\[
 \frac{n_2(a)-n_2(a')}{s}
 =2N(a_2-a'_2)+2(a_m-a'_m)
 \leq-2N+2(N-1)=-2<0,
\]
contrary to $n_2(a)\geq n_2(a')$. Thus $a_2\geq a'_2$.
The first coordinate then forces
\begin{equation}\label{eq:sharp-incomparability}
 0\leq\frac{n_1(a)-n_1(a')}{s}
 =-2N(a_2-a'_2)-\sum_{i=3}^{m}(a_i-a'_i)\leq0.
\end{equation}
Hence
\[
 2N(a_2-a'_2)+\sum_{i=3}^{m}(a_i-a'_i)=0.
\]
Since $a_i-a'_i\geq0$ for every $2\leq i\leq m$, each of these
differences must vanish. Thus $a=a'$. Rectangles with
different indices therefore cannot contain one another.

\emph{The nested sets.}
For each coordinate $2\leq i\leq m$, set $A^i_0=Q^i$.
Define, for $0\leq k\leq N$,
\[
 b_2(k)=2Nk,\qquad
 b_i(k)=k\quad(3\leq i\leq m-1),\qquad
 b_m(k)=L+k.
\]
Holding $i$ fixed, for $k=1,\ldots,N$, suppose inductively that
$A^i_0,\ldots,A^i_{k-1}$ have been constructed, with
\[
 Q^i=A^i_0\supset A^i_1\supset\cdots\supset A^i_{k-1},
\]
$|A^i_j|=\delta^j$ and each $A^i_j$ a union of dyadic cubes of side
length $2^{-sb_i(j)}$ for $0\leq j<k$.
These properties hold for $A^i_0=Q^i$, since $b_i(0)\geq0$.
Since
\[
 b_i(k)-1-b_i(k-1)
 =\begin{cases}
 2N-1,&i=2,\\
 0,&3\leq i\leq m,
 \end{cases}
 \geq0,
\]
we can partition $A^i_{k-1}$
into dyadic cubes $P$ with
$\ell(P)=2^{-s(b_i(k)-1)}$.
In each $P$, retain any $2^{s(d_i-1)}$ of its $2^{sd_i}$ dyadic
subcubes of side length $2^{-sb_i(k)}$, and let $A^i_k$ be the union of
all the retained cubes. Each such $P$ therefore satisfies
\[
 |P\cap A^i_k|
 =2^{s(d_i-1)}2^{-sd_i}|P|=\delta|P|.
\]
Since the cubes $P$ partition $A^i_{k-1}$,
\[
 |A^i_k|
 =\sum_P|P\cap A^i_k|
 =\delta\sum_P|P|
 =\delta|A^i_{k-1}|
 =\delta\delta^{k-1}
 =\delta^k.
\]
Thus $A^i_k$ is a union of dyadic cubes of side length
$2^{-sb_i(k)}$, and
\begin{equation}\label{eq:sharp-nested-measures}
 A^i_k\subset A^i_{k-1},\qquad |A^i_k|=\delta^k,
 \qquad 1\leq k\leq N.
\end{equation}

The scales of these sets and the chosen rectangle side lengths satisfy
\begin{equation}\label{eq:sharp-selection-scales}
 b_i(a_i)\leq n_i(a)/s\leq b_i(a_i+1)-1,
 \qquad 2\leq i\leq m.
\end{equation}
To see \eqref{eq:sharp-selection-scales}, notice that for $i\geq3$,
\[
 b_i(a_i)=\frac{n_i(a)}s=b_i(a_i+1)-1
 =\begin{cases}
 a_i,&3\leq i\leq m-1,\\
 L+a_m,&i=m,
 \end{cases}
\]
whereas for $i=2$,
\[
\begin{aligned}
 b_2(a_2)=2Na_2
 &\leq\frac{n_2(a)}s=2Na_2+2a_m\\
 &\leq2Na_2+2(N-1)
 \leq2N(a_2+1)-1=b_2(a_2+1)-1.
\end{aligned}
\]

Define
\[
\begin{aligned}
 \mathcal G(a)&=\left\{I\in\cD:
 \begin{gathered}
 I\subset\Omega,\quad \ell(I^i)=2^{-n_i(a)}\quad(1\leq i\leq m),\\
 I^i\subset A^i_{a_i}\quad(2\leq i\leq m)
 \end{gathered}\right\},\\
 \mathcal G_N&=\bigcup_a\mathcal G(a),
\end{aligned}
\]
where $a$ runs over $\{0,\ldots,N-1\}^{m-1}$.
The lower bound in
\eqref{eq:sharp-selection-scales} shows that $\mathcal G(a)$ partitions
\[
 Q^1\times\prod_{i=2}^m A^i_{a_i}.
\]
Since $\mathcal G(0)$
partitions $\Omega$,
\[
 \sh(\mathcal G_N)=\Omega,\qquad |\sh(\mathcal G_N)|=1.
\]
Rectangles within one $\mathcal G(a)$ are disjoint. Rectangles from
different indices cannot contain one another by
\eqref{eq:sharp-incomparability}. Hence $\mathcal G_N$ is incomparable.

\emph{Sparseness and overlap.}
For $I\in\mathcal G(a)$, define
\[
 E_I=I^1\times\prod_{i=2}^m
                  \bigl(I^i\setminus A^i_{a_i+1}\bigr).
\]
Since $I^i\subset A^i_{a_i}$ for $2\leq i\leq m$,
\[
 E_I\subset Q^1\times\prod_{i=2}^m
                  \bigl(A^i_{a_i}\setminus A^i_{a_i+1}\bigr).
\]
For $2\leq i\leq m$, the construction at step $k=a_i+1$ partitions
$A^i_{a_i}$ into dyadic cubes $P$ with
\[
 \ell(P)=2^{-s(b_i(a_i+1)-1)}
 \leq2^{-n_i(a)}=\ell(I^i),
\]
where the inequality follows from the upper bound in
\eqref{eq:sharp-selection-scales}. Since $I^i\subset A^i_{a_i}$,
the cubes $P\subset I^i$ partition $I^i$. Hence
\[
 |I^i\cap A^i_{a_i+1}|
 =\sum_{P\subset I^i}|P\cap A^i_{a_i+1}|
 =\delta\sum_{P\subset I^i}|P|
 =\delta|I^i|,
\]
and therefore
\[
\begin{aligned}
 |E_I|
 &=|I^1|\prod_{i=2}^m\bigl(|I^i|-|I^i\cap A^i_{a_i+1}|\bigr)\\
 &=|I^1|\prod_{i=2}^m(1-\delta)|I^i|
 =(1-\delta)^{m-1}|I|\geq\eta|I|.
\end{aligned}
\]
For a fixed $a$, the sets $E_I$ are disjoint because the rectangles
in $\mathcal G(a)$ are disjoint. For distinct $a,a'$, after interchanging
them if necessary, choose a coordinate $i$ with $a_i<a'_i$.
Nesting gives $A^i_{a'_i}\subset A^i_{a_i+1}$, so
\[
 (A^i_{a_i+1})^c\cap A^i_{a'_i}=\varnothing.
\]
Thus all the $E_I$ are pairwise disjoint, proving $\eta$-sparseness.

The partitions defining $\mathcal G(a)$ give the exact overlap
\begin{equation}\label{eq:sharp-overlap-product}
\begin{aligned}
 h_{\mathcal G_N}(x)
 &=\1_{Q^1}(x_1)
     \sum_{a\in\{0,\ldots,N-1\}^{m-1}}
                  \prod_{i=2}^m\1_{A^i_{a_i}}(x_i)\\
 &=\1_{Q^1}(x_1)\prod_{i=2}^m
                  \left(\sum_{k=0}^{N-1}\1_{A^i_k}(x_i)\right).
\end{aligned}
\end{equation}
Each factor is at most $N$ and equals $N$ precisely on $A^i_{N-1}$.
Consequently, using \eqref{eq:sharp-nested-measures},
\[
 \{h_{\mathcal G_N}=N^{m-1}\}
 =Q^1\times\prod_{i=2}^m A^i_{N-1}.
\]
With $C=(m-1)\log(1/\delta)=(m-1)s\log2$, independent of $N$,
\[
 |\{h_{\mathcal G_N}=N^{m-1}\}|
 =\delta^{(m-1)(N-1)}
 =e^{-C(N-1)}\geq e^{-CN}.
\]
The calculation at the beginning of the proof now gives the asserted
divergence of the exponential integrals.
\end{proof}

\bibliographystyle{amsplain}
\bibliography{references}
\end{document}